\newtheorem{thm}{Theorem}
\newtheorem{proposition}{Proposition}[section]
\newtheorem{lemma}[proposition]{Lemma}
\newtheorem{cor}[proposition]{Corollary}
\newtheorem{definition}[proposition]{Definition}
\newtheorem{criterion}{Criterion}
\theoremstyle{remark}
\newtheorem{remark}[proposition]{Remark}
\numberwithin{equation}{section}
\newcommand{\conj}[1]{\overline{#1}} 
\DeclareMathOperator{\PV}{PV} 
\newcommand{\lap}{\Delta} 
\newcommand{\Ai}{{\rm Ai}}
\let\div\relax 
\DeclareMathOperator{\div}{div}
\newcommand{\T}{\mathbb{T}}
\newcommand{\R}{\mathbb{R}}
\newcommand{\C}{\mathbb{C}}
\newcommand{\N}{{\mathbb N}}
\newcommand{\Z}{{\mathbb Z}}
\newcommand{\ee}{\mathrm{e}} 
\newcommand{\ii}{\mathrm{i}} 
\newcommand{\dd}{\mathrm{d}} 
\newcommand{\OO}{{\cal O}}
\newcommand{\eps}{{\varepsilon}}
\renewcommand{\Re}{\mathcal{R}e\,}
\renewcommand{\Im}{\mathcal{I}m\,}
\newcommand{\bfu}{\mathbf{u}}
\newcommand{\bfx}{\mathbf{x}}
\newcommand{\modeU}{\mathcal{U}} 
\newcommand{\modeV}{\mathcal{V}} 
\newcommand{\Us}{U_s}
\newcommand{\ds}{\Delta_s}
\newcommand{\hs}{H_s}
\newcommand{\vv}{\phi}
\newcommand{\vc}{\vv_{\mathrm{c}}}
\newcommand{\vf}{\vv_{\mathrm{f}}}
\newcommand{\cH}{\mathcal{H}}
\newcommand{\tvv}{\tilde \vv}
\newcommand{\inviscid}{\mathrm{inv}}
\newcommand{\interior}{\mathrm{int}}
\newcommand{\bl}{\mathrm{bl}}
\newcommand{\Pinv}{\Phi_{\inviscid}}
\newcounter{savecntrP6}
\newcounter{restorecntrP6}
\newcounter{savecntrCNRS}
\newcounter{restorecntrCNRS}
\newcounter{savecntrP7}
\newcounter{restorecntrP7}
\author{Anne-Laure Dalibard\setcounter{savecntrP6}{\value{footnote}}\thanks{Sorbonne Universit\'es, UPMC Univ. Paris 06, UMR 7598, Laboratoire Jacques-Louis Lions, F-75005, Paris, France } \setcounter{savecntrCNRS}{\value{footnote}}\thanks{CNRS, UMR 7598, Laboratoire Jacques-Louis Lions, F-75005, Paris, France }
	\and Helge Dietert\setcounter{savecntrP7}{\value{footnote}}\thanks{Universit\'e Paris Diderot, Sorbonne Paris Cit\'e, Institut de Math\'ematiques de Jussieu-Paris Rive Gauche (UMR 7586), F-75205 Paris, France}
	\and David G\'erard-Varet\setcounter{restorecntrP7}{\value{footnote}}%
	\setcounter{footnote}{\value{savecntrP7}}\footnotemark
	\setcounter{footnote}{\value{restorecntrP7}}\; \thanks{Institut Universitaire de France, F-75205 Paris, France}
\and Fr\'ed\'eric Marbach\setcounter{restorecntrP6}{\value{footnote}}%
	\setcounter{footnote}{\value{savecntrP6}}\footnotemark
	\setcounter{footnote}{\value{restorecntrP6}}
		\setcounter{restorecntrCNRS}{\value{footnote}}%
		\setcounter{footnote}{\value{savecntrCNRS}}\footnotemark
		\setcounter{footnote}{\value{restorecntrCNRS}} 
}
\title{High frequency analysis  of the unsteady\\ Interactive Boundary Layer model}
\begin{document}
\maketitle
\begin{abstract}
  The present paper is about a famous extension of the Prandtl
  equation, the so-called {\em Interactive Boundary Layer} model
  (IBL). This model has been used intensively in the numerics of
  steady boundary layer flows, and compares favorably to the Prandtl
  one, especially past separation. We consider here the unsteady
  version of the IBL, and study its linear well-posedness, namely the
  linear stability of shear flow solutions to high frequency
  perturbations. We show that the IBL model exhibits strong
  unrealistic instabilities, that are in particular distinct from the
  Tollmien-Schlichting waves. 
We also exhibit similar instabilities for a
  \emph{Prescribed Displacement Thickness} model (PDT), which
  is one of the building blocks of numerical
  implementations of the IBL model.
\end{abstract}

\section{Introduction}

\subsection{Usual boundary layer theory}

The general concern of this paper is the boundary layer behavior of
high Reynolds number flows. We restrict to the two-dimensional case
near a flat boundary, and consider the Navier-Stokes system for
$t > 0$, $\bfx = (x,Y) \in \R \times \R_+$
\begin{equation} \label{eq:ns}
  \partial_t \bfu^\nu + \bfu^\nu \cdot \nabla \bfu^\nu + \nabla p^\nu - \nu \lap
  \bfu^\nu = 0,
  \quad \div \bfu^\nu = 0, \quad \bfu^\nu\vert_{Y=0} = 0.
\end{equation}
The parameter $\nu \ll 1$ refers to the inverse Reynolds number.

\medskip
The first description and analysis of the boundary layer goes back to the celebrated work of Prandtl, who suggested a matched asymptotic expansion for $\bfu^\nu$, of the following type
\begin{itemize}
\item away from the boundary, the Navier-Stokes solution should be described by a regular expansion in powers of $\sqrt{\nu}$, namely
\begin{equation}
\label{interior_expansion}
\bfu^\nu(t,x,Y) \sim \bfu^0(t,x,Y) + \sqrt{\nu} \bfu^1(t,x,Y) + \nu
\bfu^2(t,x,Y) + \dotsb
\end{equation}
where $\bfu^0 = (u^0,v^0)$ is the Euler solution
\begin{equation} \label{eq:euler}
\partial_t \bfu^0 + \bfu^0 \cdot \nabla \bfu^0 + \nabla p^0  = 0, \: \div \bfu^0 = 0, \: t > 0, \: \bfx \in \R \times \R_+, \quad v_0\vert_{Y=0} = 0.
\end{equation}
\item near the boundary, $\bfu^\nu = (u^\nu, v^\nu)$ should be described by an expansion of boundary layer type, with parabolic scale $\sqrt{\nu}$, namely
\begin{equation} \label{boundary_expansion}
\begin{aligned}
u^\nu(t,x,Y)  \: \sim \: & U^0(t,x,Y/\sqrt{\nu}) + \sqrt{\nu} U^1(t,x,Y/\sqrt{\nu})  + \dotsb \\
v^\nu(t,x,Y)   \: \sim \:  & \sqrt{\nu} V^0(t,x,Y/\sqrt{\nu}) + \nu V^1(t,x,Y/\sqrt{\nu})  + \dotsb
\end{aligned}\end{equation}
\end{itemize}
A similar expansion is assumed on  the pressure. After plugging expansion \eqref{boundary_expansion} into the Navier-Stokes equation \eqref{eq:ns}, one recovers the famous Prandtl  system for the leading order profile $(U^0,V^0)(t,x,y)$:
\begin{equation} \label{eq:prandtl-p0}
\begin{aligned}
& \partial_t U^0 + U^0 \partial_x U^0  + V^0 \partial_y U^0 + \partial_x P^0 - \partial^2_y U^0 =  0, \quad x \in \R, \: y > 0, \\
& \partial_y P^0 = 0, \quad x \in \R, \: y > 0, \\
& \partial_x U^0 + \partial_y V^0 = 0, \quad x \in \R, \: y > 0, \\
& U^0\vert_{y=0} =  V^0\vert_{y=0} = 0, \quad U^0 \rightarrow u_e \: \text{ as } \: y \rightarrow +\infty
\end{aligned}\end{equation}
with $u_e(t,x) := u^0(t,x,0)$.  If we assume some fast enough convergence of $U^0 - u_e$ and of its $y$-derivatives to zero as $y$ goes to infinity, system \eqref{eq:prandtl-p0} further simplifies into
\begin{equation} \label{Prandtl}
\begin{aligned}
& \partial_t U^0 + U^0 \partial_x U^0  + V^0 \partial_y U^0 - \partial^2_y U^0 = \partial_t u_e + u_e \partial_x u_e, \quad x \in \R, \: y > 0, \\
& \partial_x U^0 + \partial_y V^0 = 0, \quad x \in \R, \: y > 0, \\
& U^0\vert_{y=0} =  V^0\vert_{y=0} = 0, \quad U^0 \rightarrow u_e \quad \text{as } \: y \rightarrow +\infty
\end{aligned}\end{equation}
still with $u_e(t,x) = u^0(t,x,0)$.

\medskip
The Prandtl model \eqref{Prandtl} is the cornerstone of our
understanding of the boundary layer behavior. One striking success of
this model is the description of steady high Reynolds number flows
along a thin plate. If we model the thin plate by the half line
$\{x > 0, y = 0\}$ and follow the Prandtl theory, we end up with a
system of the type
\begin{equation} \label{Blasius}
\begin{aligned}
\partial_t U^0+ U^0 \partial_x U^0  + V^0 \partial_y U^0 - \partial^2_y U^0 = 0, \quad x \in \R_+, \: y > 0, \\
\partial_x U^0 + \partial_y V^0 = 0, \quad x \in \R_+, \: y > 0, \\
\quad U^0\vert_{y=0} =  V^0\vert_{y=0} = 0, \quad U^0 \rightarrow 1 \quad \text{as } \: y \rightarrow +\infty.
\end{aligned}\end{equation}
In the steady case, a solution of this equation is given by the  self-similar {\em Blasius flow}
\begin{equation}
 U^0(x,y) =  f\left(\frac{y}{\sqrt{x}}\right)
\end{equation}
for a profile $f$ satisfying an integrodifferential equation, see \cite{blasius1907grenzschichten}. Indeed, experiments and simulations show that this Blasius solution is  an accurate approximation of the flow for Reynolds numbers up to $10^5$.

\medskip
Still, the range of validity of the Prandtl approximation is limited, due to hydrodynamic instabilities. Two destabilizing mechanisms are well known:
\begin{itemize}
\item The so-called {\em Tollmien-Schlichting instability}. It is the classical Navier-Stokes instability of monotonic shear flows at high Reynolds numbers. This instability affects in particular the Blasius flow. We provide further comments in \cref{sec:TS}.
\item The so-called {\em separation phenomenon}. When the pressure gradient $\partial_x p^e := -\partial_t u_e - u_e \partial_x u_e$ is adverse (positive for positive $u_e$), it usually generates some recirculation (reverse flow),  with  streamlines detaching from the boundary, and a turbulent wake appearing behind the recirculation zone. We refer to \cite{guyon,DalMas,} for description of this phenomenon in the steady case, and to \cite{gargano1,gargano2} in the unsteady one.
\end{itemize}
It is well acknowledged that the Prandtl model is unable to reflect properly this kind of instabilities. In particular, separation in the steady case yields a blow up of the equations, while different types of blow-up or ill-posedness results are known in the unsteady setting.

\subsection{Alternative approaches}

One main flaw in the asymptotic model of Prandtl is that the Euler flow is given {\it a priori}, and forces the dynamics of the boundary layer through the data $u_e$. This is in contradiction with experimental observations at the onset of separation, see \cite{CebCou}, and suggests that a refined asymptotics should include some kind of interaction between the boundary layer and the inviscid equations. A hint in that direction was provided by Catherall and Mangler in \cite{catherall1966integration}:  instead of prescribing $u_e$ in the steady Prandtl system
\begin{equation}
\nonumber
\begin{aligned}
& U^0 \partial_x U^0  + V^0 \partial_y U^0 - \partial^2_y U^0 =  u_e \partial_x u_e, \quad x > 0, \: y > 0, \\
& \partial_x U^0 + \partial_y V^0 = 0, \quad x > 0, \: y > 0, \\
& U^0\vert_{y=0} =  V^0\vert_{y=0} = 0, \quad U^0 \rightarrow u_e \quad \text{as } \: y \rightarrow +\infty
\end{aligned}\end{equation}
they prescribed the {\em displacement thickness}
\begin{equation}\label{def:DT}
  \delta(t,x) := \int_{\R_+}
  \left( 1 - \frac{U^0(t,x,y)}{u_e(t,x)} \right) \dd y,
\end{equation}
and were able to solve numerically the boundary layer equations past separation, into a region of reverse flow.
This idea to consider $u_e$ as an  unknown and to couple the  inviscid and boundary layer equations was later formalized by Le Balleur, Carter, or Veldman \cite{le1990new,carter1974solutions,MR2557212}, giving birth to the so-called {\em Interactive Boundary Layer} theory (IBL) that we will now discuss.

\medskip
 In order to keep an inviscid-viscous interaction, we go back to asymptotic expansions \eqref{interior_expansion} and \eqref{boundary_expansion}. As these expansions  describe the same Navier-Stokes solution, they must coincide in the intermediate region $\sqrt{\nu} \ll Y \ll 1$. This implies a series of  matching conditions, starting with the relationships  $v^0(t,x,0) = 0$ and $\lim_{y \rightarrow +\infty} U^0 = u_e$. The next ones can be obtained from a Taylor expansion of \eqref{interior_expansion}. We have, for $Y=\sqrt{\nu} y \ll  1$:
 \begin{equation*}
\begin{aligned}
u^0(t,x,Y) & + \sqrt{\nu} u^1(t,x,Y) + \dotsb \\
= \: & u_e(t,x) + \sqrt{\nu} y \partial_Y u^0(t,x,0) +  \sqrt{\nu} u^1(t,x,0) + \OO\left(\nu + (\sqrt{\nu} y)^2\right).
\end{aligned}
\end{equation*}
If we further assume that the Euler flow is irrotational, then  $\partial_y u^0(t,x,0)= 0$ and we end up with the condition
\begin{equation}
u^0(t,x,Y) + \sqrt{\nu} u^1(t,x,Y) + \dots = u_e(t,x) +  \sqrt{\nu} u^1(t,x,0) + \OO\left(\nu + (\sqrt{\nu} y)^2\right).
\end{equation}
We deduce from this relation that
\begin{equation} \label{limU1}
 \lim_{y \rightarrow +\infty}  U^1(t,x,y) = u^1(t,x,0).
 \end{equation}
As regards the vertical velocity component, we find that for  $\sqrt{\nu} y \ll 1$:
\begin{equation*}
\begin{aligned}
v^0(t,x,Y) & + \sqrt{\nu} v^1(t,x,Y) + \dotsb \\
= \: & v^0(t,x,0) + \sqrt{\nu} y \partial_Y v^0(t,x,0) +   \sqrt{\nu} v^1(t,x,0) + \OO\left(\nu + (\sqrt{\nu} y)^2\right) \\
=  \: & - \sqrt{\nu} y \partial_x u_e(t,x)   +   \sqrt{\nu} v^1(t,x,0) + \OO\left(\nu + (\sqrt{\nu} y)^2\right)
\end{aligned}
\end{equation*}
On the other hand, for $\sqrt{\nu} \ll \sqrt{\nu} y \ll 1$,
\begin{equation*}
\begin{aligned}
& \sqrt{\nu} V^0(t,x,y) + \nu V^1(t,x,y)  + \dotsb \\
= \: & - \sqrt{\nu} \int_0^y \partial_x U^0(t,x,y') \dd y' - \nu  \int_0^y \partial_x U^1(t,x,y') \dd y' + \dotsb \\
= \: &  - \sqrt{\nu} \int_0^y  \partial_x U^0(t,x,y') \dd y'  + \OO(\nu y) \\
= \: & - \sqrt{\nu} \int_0^y  \partial_x u_e(t,x) \dd y' + \sqrt{\nu}  \int_0^y \left( \partial_x u_e(t,x)  -  \partial_x U^0(t,x,y') \right) \dd y'  + \OO(\nu y) \\
= \: &   - \sqrt{\nu} y \partial_x u_e(t,x) + \sqrt{\nu} \partial_x (u_e \delta)(t,x) + \OO(\nu y) + \OO( \sqrt{\nu} y^{-1})
\end{aligned}
\end{equation*}
where we have assumed for simplicity that  $\partial_x U^0(t,x,y) - \partial_x u_e(t,x) = \OO(y^{-2})$ at infinity. Recalling the definition \eqref{def:DT} of the displacement thickness $\delta$, it follows that
\begin{equation}
v^1(t,x,0) = \partial_x (u_e \delta)(t,x).
\end{equation}
In the Prandtl approach, this boundary condition allows to compute the next order term $\bfu^1$ in expansion \eqref{interior_expansion}, which in turn allows for the derivation of  the boundary layer correction $(U^1, V^1)$, notably imposing \eqref{limU1}. This procedure can be applied recursively to determine all terms in the expansion.

\medskip
{\it A contrario}, in the IBL approach, one does not derive the zero and first order term successively, but includes the first order correction in the zero order system. More precisely, the idea is that the approximations
\begin{equation*}
  \begin{aligned}
    & \bfu(t,x,Y) \sim \bfu^0(t,x,Y) + \sqrt{\nu} \bfu^1(t,x,Y) + \dotsb , \\
    & U(t,x,y) \sim U^0(t,x,y) + \sqrt{\nu} U^1(t,x,y) + \dotsb ,  \\
    & V(t,x,y) \sim V^0(t,x,y) + \sqrt{\nu} V^1(t,x,y) + \dotsb
  \end{aligned}
\end{equation*}
satisfy the boundary conditions
\begin{equation*}
 \lim_{y \rightarrow +\infty} U(t,x,y) \approx u_e(t,x), \quad
 v(t,x,0) \approx \sqrt{\nu} \partial_x (u_e \delta)(t,x),
\end{equation*}
with
$u_e(t,x) := u(t,x,0), \: \delta := \int_{\R_+} \left(1 -
  \frac{U(t,x,y)}{u_e(t,x)}\right) \dd y$. Note that in this definition of $u_e$, we have kept all terms up to the order $\sqrt{\nu}$, and not merely $u^0$. This boundary condition
will allow to keep a coupling between the inviscid (Euler) system and
the boundary layer system. Concretely, the IBL system consists of the
following set of equations:
\begin{itemize}
\item The Euler system in $\{x \in \R, Y > 0\}$, with unknown $\bfu = (u,v)$
\begin{equation} \label{Euler}
\partial_t \bfu + \bfu \cdot \nabla \bfu + \nabla p = 0, \quad \div \bfu = 0.
\end{equation}
\item The boundary layer system  in $\{x \in \R, y > 0\}$, with unknown $(U,V)$
\begin{equation} \label{IBL}
\partial_t U + U \partial_x U + V \partial_y U - \partial^2_y U  = \partial_t u_e + u_e \partial_x u_e, \quad \partial_x U + \partial_y V = 0
\end{equation}
\item The coupling boundary conditions
\begin{equation} \label{coupling}
\begin{aligned}
& U\vert_{y=0} = V\vert_{y=0} = 0, \quad \lim_{y \rightarrow +\infty} U = u_e, \quad  \text{ with } \:  u_e(t,x) := u(t,x,0), \\
& v\vert_{Y=0} = \sqrt{\nu}    \partial_x (u_e \delta), \quad \text{
  with } \:  \delta = \int_{\R_+} \left(1 -
  \frac{U(t,x,y)}{u_e(t,x)}\right) \dd y.
\end{aligned}
\end{equation}
\end{itemize}
The main novelty of this model is the second line in equation (\ref{coupling}), which
creates a strong coupling between \eqref{Euler} and \eqref{IBL}. If
the $\OO(\sqrt{\nu})$ term is neglected at the right-hand side of
(\ref{coupling}b), the equations \eqref{Euler} and \eqref{IBL} are
independent and one recovers the usual system. On the contrary, by
retaining this ``viscous'' term, we hope to keep relevant singular
perturbation effects from the Navier-Stokes equations.

\medskip
As mentioned earlier, system
\eqref{Euler}-\eqref{IBL}-\eqref{coupling} has revealed satisfactory
for the computation of steady boundary layer flows past separation. We
refer to the monographs \cite{CebCou,CouMau,Sob}, as well as to the lectures \cite{Lag} for an
extensive discussion, with the description of various numerical
schemes.  Our focus in this paper will be about the well-posedness of
the unsteady IBL system, as described by
\eqref{Euler}-\eqref{IBL}-\eqref{coupling}. We wish to investigate if
the IBL system has better well-posedness properties than Prandtl, as
suggested by the steady case. We will notably pay attention to the
linearization of the IBL system around shear flow solutions:
$U = U(y), V = 0$. In the Prandtl case, it is known that such
linearization is well-posed in Sobolev spaces when the shear flow is
monotonic (see \cite{MasWon,AWXY} for the general nonlinear case). On the
contrary, when the shear flow admits some non-degenerate critical
point, it exhibits high frequency instabilities that forbid
well-posedness outside the Gevrey setting \cite{MR2601044}. It is then natural
to see if such instabilities disappear in the IBL case. Let us mention
that this phenomenon was observed in the context of MHD boundary
layers, where a tangential magnetic field restores stability to high
frequencies and therefore Sobolev well-posedness \cite{GerPre,Tong1,Tong2}.

Still, as we will show hereafter, the well-posedness properties of the
unsteady IBL system are limited, as the system allows for various
kinds of high frequency instabilities. We detail our results in the
next section.

\section{Instability results}
\label{sec:main-results}
We will present here various instability theorems, and comment on their practical physical relevance. 
\subsection{Linearization around shear flows}

We consider the linearization of system \eqref{Euler}-\eqref{IBL}-\eqref{coupling} around a shear flow. Namely, the reference velocity field is given by
\begin{equation}
 \bfu = (1,0), \quad U = U_s(y),  \quad V = 0, \quad \text{ with } \: U_s(0) = 0, \quad \lim_{y \rightarrow +\infty} U_s(y) = 1.
\end{equation}
We assume that $U_s$ is smooth, monotone, with decaying derivatives at infinity. We refer to \cref{sec:us-assumptions} for a more precise description of our assumptions.
Let us remark that this reference flow satisfies \eqref{Euler} and \eqref{CC}, but not the boundary layer equation \eqref{IBL}: to obtain a solution of the IBL, one should rather consider some time dependent shear flow $(U(t,y), 0)$ satisfying the heat equation $\partial_t U - \partial^2_y U = 0$. Still, as we will investigate high-frequency/short-time instabilities, we expect only the initial data $U_s(y) = U(0,y)$ to play a role, and neglect the time dependence. Such a shortcut is classical in the framework of high frequency instabilities, but of course the irrelevance of the time dependence of $U$ would need to be proved rigorously, following  for instance the strategy in \cite{MR2601044}.  However this goes beyond the scope of this paper, whose goal is rather to prove the existence of exponentially growing modes for shear flows meeting certain criteria. 

\medskip
Linearizing the Euler equation \eqref{eq:euler}, we find that the perturbation (still denoted by $\bfu = (u,v)$) satisfies
\begin{equation}
 \partial_t u + \partial_x u + \partial_x p = 0, \quad \partial_t v + \partial_x v + \partial_Y p = 0, \quad \partial_x u + \partial_Y v = 0. 
\end{equation}
The linearized Prandtl system reads
\begin{equation} \partial_t U + U_s \partial_x U + V U'_s - \partial^2_y U  = \partial_t u_e + \partial_x u_e, \quad \partial_x U + \partial_y V  = 0, 
\end{equation}
while boundary and coupling conditions become
\begin{equation*} \label{CC}
\begin{aligned}
& U\vert_{y=0} = V\vert_{y=0} = 0, \quad \lim_{y \rightarrow +\infty} U = u_e, \quad  \text{ with } \:  u_e(t,x) := u(t,x,0), \\
& v\vert_{Y=0} = \sqrt{\nu}   \partial_x \int_0^{+\infty} (u_e - U)\, \dd y.
\end{aligned}
\end{equation*}
Here and throughout the paper, we consider perturbations $u,v, U,V$
that are $2\pi$-periodic in the tangential variable:
$x \in \T := \R/(2\pi\Z)$. Moreover, there is no loss of generality in
restricting to mean-free perturbations:
$\int_\T (u,v) \dd x = \int_\T (U,V) \dd x = 0$.  Eventually, we impose
the Euler part $(u,v)$ of the perturbation to be curl-free, which is
consistent with the derivation of the IBL, especially condition
\eqref{limU1}. This condition is of course preserved by the
evolution. Under this irrotationality assumption, we can further
simplify the system: we can write $v = \partial_x \psi$, and
$u_e(t,x) = -\partial_Y \psi_{|Y=0} $ for a {\em harmonic} stream
function $\psi$. It is then natural to introduce the
Dirichlet-to-Neumann operator\footnote{We use the convention $\mathcal{DN} f= -\partial_Y \psi_{|Y=0}$, where $\psi$ is the solution of $\Delta \psi=0$ in $\T\times (0, +\infty)$, $\psi_{|Y=0}=f$ for any $f\in H^{1/2}(\T)$ with zero mean value.} $\mathcal{DN}$ over zero-mean functions
of variable $x$. We remind that it is defined in Fourier by
\begin{equation} 
\widehat{\mathcal{DN} f}(k) = |k| \hat{f}(k), \quad \forall \, k \in \Z.
\end{equation}
Then, one has $\partial_x u_e = \mathcal{DN} v\vert_{Y=0}$, and the
the coupling condition leads to the identity
\begin{equation}
\partial_x u_e   =  \sqrt{\nu}  \mathcal{DN}  \partial_x
\int_0^{+\infty} (u_e - U)\, \dd y.
\end{equation}
As all terms are mean-free, we can remove the
$x$-derivative. Eventually, we write the linearized system as the
following system, with unknowns $U,V,u_e$:
\begin{equation} \label{LIBL}
  \begin{aligned}
    &\partial_t U + U_s \partial_x U + V U'_s - \partial^2_y U  = \partial_t u_e + \partial_x u_e, \quad \partial_x U + \partial_y V  = 0,\\
    & U\vert_{y=0} = V\vert_{y=0} = 0, \quad \lim_{y \rightarrow +\infty}
    U = u_e, \quad u_e = \sqrt{\nu}  \mathcal{DN}
    \int_0^{+\infty} (u_e - U)\, \dd y.
  \end{aligned}
\end{equation}
In complement to the analysis of model \eqref{LIBL}, we also consider the boundary layer system with prescribed displacement thickness (abridged as PDT in the rest of the paper). The unsteady nonlinear version of this system reads
\begin{equation} \label{DT}
\begin{aligned}
& \partial_t U + U \partial_x U + V \partial_y U - \partial^2_y U  = \partial_t u_e + u_e \partial_x u_e, \quad \partial_x U + \partial_y V = 0, \\
& U\vert_{y=0} = V\vert_{y=0} = 0, \quad \lim_{y \rightarrow +\infty} U = u_e, \\
&  \delta = \delta_0 \: \text{ given, with } \:  \delta(t,x) \: :=  \: \int_0^\infty \left(1 - \frac{U(t,x,y)}{u_e(t,x)}\right)\, \dd y.
\end{aligned}
\end{equation}
This system is important in at least two ways. First, it was used in
the pioneering work of Catherall and Mangler \cite{catherall1966integration}, and served as a starting
point of the IBL theory. More crucially, it is used in the simulation
of the IBL, following the numerical approach of Le Balleur or Carter,
see \cite{LeBalleur1984, carter1974solutions}. After linearization
around the shear flow, we find
\begin{equation} \label{LDT}
\begin{aligned}
&\partial_t U + U_s \partial_x U + V U'_s - \partial^2_y U  = \partial_t u_e + \partial_x u_e,  \quad \partial_x U + \partial_y V  = 0,\\
& U\vert_{y=0} = V\vert_{y=0} = 0, \quad \lim_{y \rightarrow +\infty}
U = u_e, \quad \int_0^{+\infty} (u_e - U)\, \dd y = \ds u_e
\end{aligned}
\end{equation}
with $\ds$ the displacement thickness associated to the shear
flow. The only difference between systems \eqref{LIBL} and
\eqref{LDT} is the relation between $u_e$ and
$\int_0^\infty (u_e - U)\, \dd y$.

\subsection{High tangential frequencies analysis}

In the linearized IBL and PDT systems, the evolution of the Fourier modes in the
tangential direction $x$ is decoupled. Therefore, we intend to prove the 
instability of these systems by exhibiting solutions with the following structure:
\begin{equation}
  \label{eq:eigenmode-time-x-form}
  \left( U(t,x,y), V(t,x,y), u_e(t,x) \right) 
  =
  \ee^{\lambda_k t} \, \ee^{\ii k x}
  \left( \modeU(y), \modeV(y), 1 \right),
\end{equation}
where $k$ characterizes the tangential frequency, $\Re \lambda_k$ the 
growth rate in time. Note that we impose the normalization condition
$\mathcal{U}(+\infty) = 1$. This excludes the special case where $u_e = 0$ in \eqref{LIBL} or \eqref{LDT}. This is not restrictive: in the case $u_e = 0$, an eigenmode would also be an eigenmode of the classical Prandtl equation, which has been studied extensively. 

Since we consider mean-free perturbations
and since the initial physical systems are real-valued, we can restrict
to positive frequencies without loss of generality.
We will use the following definition.

\begin{definition}
 Let $k \in \N^*$ and $\lambda \in \C$ satisfying $\Re \lambda > 0$.
 We say that a linear system displays a $(\lambda, k)$ instability 
 when there exists $\modeU, \modeV \in C^\infty(\R_+,\R)$ such 
 that~\eqref{eq:eigenmode-time-x-form} defines a smooth solution to this system.
\end{definition}

\subsection{Statement of the main results}
\label{sec:statements}

Here and in the sequel, we implicitly assume that the shear flow $\Us$ satisfies
 $\Us(0) = 0$ and $\Us(+\infty) = 1$ and that it is smooth, monotone, with decaying derivatives at infinity. We refer to \cref{sec:us-assumptions} for a more precise description of our assumptions.

\begin{thm}[Sufficient instability condition for PDT]
 \label{thm:pdt-sufficient}
 Let $\Us$ be a shear flow satisfying \cref{crit:pdt-sufficient}, page \pageref{crit:pdt-sufficient}.
 There exist $K,\eta > 0$ such that, for $k \geqslant K$,
 there exists $\lambda_k \in \C$ with $\Re \lambda_k \geqslant \eta k$
 such that the PDT system~\eqref{LDT} displays $(\lambda_k, k)$ instabilities.
\end{thm}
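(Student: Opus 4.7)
My plan is to construct the requested eigenmode by combining matched asymptotic expansions with a perturbative root-finding argument. After the ansatz \eqref{eq:eigenmode-time-x-form} and the elimination $\modeV(y)=-\ii k\int_0^y\modeU$, the system \eqref{LDT} reduces to a single second-order ODE for $\mathcal{W}:=\modeU-1$ on $\R_+$, with boundary conditions $\mathcal{W}(0)=-1$, $\mathcal{W}(+\infty)=0$ and the displacement-thickness constraint $\int_0^{+\infty}\mathcal{W}\,\mathrm{d}y=-\ds$. For generic $\lambda$ the two-point BVP has a unique decaying solution, and the integral constraint then selects $\lambda$ as a zero of a scalar dispersion function $D_k(\lambda)=0$; the task reduces to exhibiting such a zero with $\Re\lambda\geqslant\eta k$.

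Setting $\lambda=k\mu$ with $\mu=\OO(1)$ and $\Re\mu>0$, I would perform a matched asymptotic analysis in the limit $k\to+\infty$. In the \emph{outer} (inviscid) region the viscous term $-\mathcal{W}''$ is dropped; differentiating the resulting first-order integro-differential equation yields the closed-form ODE
\[
  (\mu+\ii U_s)\,\mathcal{W}_0' \:=\: \ii\,y\,U_s'',\qquad \mathcal{W}_0(0)=\ii/\mu,
\]
which integrates explicitly by the integrating-factor method. The no-slip condition $\mathcal{W}(0)=-1$ is restored by an \emph{inner} boundary layer of thickness $k^{-1/2}$, driven to leading order by the constant-coefficient equation $-H_{YY}+\mu H=0$ (with $Y=\sqrt{k}\,y$), whose decaying mode $\ee^{-\sqrt{\mu}\,Y}$ provides the correction $H(0)=-1-\ii/\mu$. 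Imposing the displacement constraint on the resulting composite ansatz yields, at leading order in $k^{-1/2}$, the limiting dispersion
\[
  D_0(\mu) \:=\: \ii(\mu+\ii)\int_0^{+\infty}\frac{(1-U_s(y))+yU_s'(y)}{(\mu+\ii U_s(y))^2}\,\mathrm{d}y + \ds \:=\: 0.
\]
I expect \cref{crit:pdt-sufficient} to be precisely a condition on $U_s$ ensuring that $D_0$ admits a root $\mu_0$ with $\Re\mu_0>0$; such a root is typically located by tracking $D_0$ along a contour in $\{\Re\mu\geqslant 0\}$—e.g.\ the imaginary axis closed by a large half-circle—and applying the argument principle, after verifying the holomorphy of $D_0$ there (the denominator $\mu+\ii U_s(y)$ does not vanish when $\Re\mu>0$).

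Finally, I would upgrade $\mu_0$ to an exact zero of the full dispersion $D_k$ by a perturbation argument. Writing the exact BVP solution as outer $+$ inner $+$ remainder and solving the remainder equation in a weighted Sobolev space by a fixed-point argument—relying on the $k$-uniform ellipticity of the viscous operator—gives $D_k(\mu)=D_0(\mu)+\OO(k^{-1/2})$ uniformly on a small disk around $\mu_0$. Rouch\'e's theorem then produces a root $\mu_k\to\mu_0$ for all $k\geqslant K$. Setting $\lambda_k:=k\mu_k$ yields $\Re\lambda_k\geqslant\eta k$ with $\eta:=\tfrac12\Re\mu_0>0$, and smoothness of the corresponding profile $(\modeU,\modeV)$ follows from classical ODE regularity. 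The main difficulty lies precisely in this perturbation step: the limit is singular (the highest-order term is dropped in the outer), so controlling the remainder uniformly as $k\to+\infty$ requires a solvability estimate for the full linearized operator in a norm adapted to the two-scale structure, uniform in $\mu$ in a small disk avoiding the spectrum of the inviscid operator (the set $-\ii U_s(\R_+)\subset\ii\R$).
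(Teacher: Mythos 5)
Your plan coincides with the paper's proof in all essential respects: you reduce to a scalar dispersion relation (your $D_0(\mu)=0$ is, after the substitution $\mu\mapsto-\ii\mu$, exactly the paper's condition $\Phi_{\inviscid}(\mu)=\ds$ from \eqref{eq:phi_inv}), you locate an inviscid root via the argument principle under Criterion~1 (the paper's \cref{thm:plemelj} and \cref{thm:winding}), you add a boundary-layer corrector of scale $k^{-1/2}$ (the paper's $\vc$ in \eqref{vc.explicit}), and you close by showing $\Phi(\mu,k)-\Phi_{\inviscid}(\mu)=\OO(k^{-1/2})$ uniformly and invoking Rouch\'e (the paper's \cref{thm:convergence-approx-inv} and \cref{sec:proofs-123}).

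Two caveats are worth flagging. First, your displayed outer ODE is not closed: differentiating the integro-differential equation actually gives $(\mu+\ii U_s)\mathcal{W}_0'-\ii U_s''\int_0^y\mathcal{W}_0=\ii y U_s''$, and the missing term $-\ii U_s''\int_0^y\mathcal{W}_0$ is not negligible. The clean route (the one the paper follows) is to integrate once and solve the genuinely first-order equation $(\mu+\ii U_s)\Psi'-\ii U_s'\Psi=\ii F$ for $\Psi=\int_0^y\mathcal{W}_0$ (which is $-\vv_\inviscid$ in the paper's notation) by integrating factor; your subsequent dispersion formula is consistent with this, so the displayed ODE is a slip rather than a structural error. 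Second, the uniform remainder estimate cannot rest on ``$k$-uniform ellipticity of the viscous operator'': that operator is $-k^{-1}\partial_y^2$, whose ellipticity degenerates as $k\to\infty$, which is precisely the singular-perturbation difficulty you correctly flag in your last sentence. The paper's \cref{thm:homo-resolvent-bounds} instead gets coercivity from the inviscid transport part (the $\Im\mu\,\|\psi\|_{L^2(\omega)}^2$ contribution in \eqref{eq:merged.energy.1}, which is why $\Im\mu\geqslant\eta>0$ — equivalently your $\Re\mu\geqslant\eta$ — is required), with the viscous term and the commutators treated as $\OO(1/k)$ perturbations, and uses a Lax--Milgram-type variational argument rather than a fixed point; as phrased, the ``fixed-point via ellipticity'' mechanism would not go through, so the solvability estimate you defer to at the end is really the crux and is non-trivial.
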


\bigskip

\begin{thm}[Sufficient instability conditions for IBL]
 \label{thm:ibl-sufficient}
 Let $\Us$ be a shear flow.
 \begin{itemize}
  \item 
  If $\Us$ satisfies \cref{crit:ibl-global-sufficient}, page \pageref{crit:ibl-global-sufficient}, there exist
  $K, \eta, \gamma_+ > 0$ such that, for $k \geqslant K$
  and $\nu > 0$ satisfying $(\sqrt{\nu} k)^{-1} \leqslant \gamma_+$,
  there exists $\lambda_{k,\nu} \in \C$ with $\Re \lambda_{k,\nu} \geqslant \eta k$
  such that the IBL system~\eqref{LIBL} displays 
  $(\lambda_{k,\nu}, k)$ instabilities.
  \item 
  If $\Us$ satisfies \cref{crit:ibl-local-sufficient}, page \pageref{crit:ibl-local-sufficient}, there exist 
  $K, \eta, \gamma_-, \gamma_+ > 0$ such that, for $k \geqslant K$
  and $\nu > 0$ satisfying $\gamma_- \leqslant (\sqrt{\nu} k)^{-1} \leqslant \gamma_+$,
  there exists $\lambda_{k,\nu} \in \C$ with $\Re \lambda_{k,\nu} \geqslant \eta k$
  such that the IBL system~\eqref{LIBL} displays 
  $(\lambda_{k,\nu}, k)$ instabilities.
  \item
 If  $\Us$ satisfies $\Us''(0) > 0$, there exist
 $\eta, \gamma_- > 0$ such that, for $(\sqrt{\nu} k)^{-1} \geqslant \gamma_-$
 and $\nu k^3 \geqslant 1$, there exists  $\lambda_{k,\nu}$
 such that the IBL system \eqref{LIBL} displays $(\lambda_{k,\nu},k)$
 instabilities with $\Re \lambda_{k,\nu} \geqslant \eta \nu k^3$.
 \end{itemize}
\end{thm}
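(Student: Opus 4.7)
The plan is to build a $(\lambda_{k,\nu},k)$-eigenmode of~\eqref{LIBL} via a matched asymptotic expansion on the reduced stream-function equation. Setting $\modeU=\Psi'$, $\modeV=-\ii k\Psi$ with the normalization $u_e=1$, the profile $\Psi$ satisfies
\begin{equation*}
\Psi''''\,=\,(\lambda+\ii k\Us)\Psi''-\ii k\Us''\,\Psi,
\end{equation*}
with $\Psi(0)=\Psi'(0)=0$, no exponentially growing mode at infinity, and the affine behavior $\Psi(y)=y-\alpha+o(1)$ as $y\to\infty$, where $\alpha:=(\sqrt{\nu}k)^{-1}$ encodes the IBL coupling. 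Writing $\lambda=-\ii k c$, an unstable mode corresponds to $\Im c>0$, and the regime of interest is $\alpha\geq\gamma_-$ together with $k$ large enough that $\nu k^3\geq 1$.

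I build $\Psi$ on two scales. In the \emph{outer} region $y\sim 1$, the $\Psi''''$ term is subdominant and $\Psi$ solves the boundary-layer Rayleigh equation $(\Us-c)\Psi''=\Us''\Psi$, spanned by $\Psi_1=\Us-c$ and $\Psi_2=\Psi_1\int^y\dd y'/\Psi_1^2$; the infinity behavior uniquely determines $\Psi_{\mathrm{out}}=-\alpha(1-c)^{-1}\Psi_1+(1-c)\Psi_2$. In the \emph{inner} viscous sublayer $y\sim(k\Us'(0))^{-1/3}$, the rescaled variable $\eta:=(k\Us'(0))^{1/3}y$ reduces the equation for $\Phi:=\Psi_{\eta\eta}$ to the Airy equation $\Phi_{\eta\eta}=(\mu+\ii\eta)\Phi$ with $\mu:=\lambda/(k\Us'(0))^{2/3}$. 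Keeping only the decaying Airy mode and adjusting two polynomial null modes to enforce $\Psi(0)=\Psi'(0)=0$ gives a one-parameter family of inner profiles of amplitude $\kappa$. Matching the constant and linear-in-$y$ parts of $\Psi_{\mathrm{out}}$ as $y\to 0$ with the large-$\eta$ asymptotics of the inner profile, then eliminating $\kappa$, produces a dispersion relation schematically of the form
\begin{equation*}
c\,(k\Us'(0))^{1/3}\,=\,\Us'(0)\,G(\mu)+\frac{(1-c)^2}{c\,\alpha}\,G(\mu),
\end{equation*}
where $G(\mu)$ is an explicit ratio of Airy integrals depending on $\mu$.

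The remaining step is to exhibit a root $c=c(k,\nu)$ of this relation with $\Im c\geq\eta'\,\nu k^2$, which yields $\Re\lambda=k\Im c\geq\eta\,\nu k^3$. The hypothesis $\Us''(0)>0$ enters through the next-order correction of the outer Rayleigh solution near the wall (via the quadratic Taylor coefficient of $\Us$ and via the critical-layer contribution of $\Psi_2$), and is precisely what pins the sign of the imaginary correction to $c$. Once a formal root has been identified, it is upgraded to a genuine eigenvalue by a Rouché- or fixed-point argument on the residuals of the matched expansion. The main obstacle is a precise complex analysis of the dispersion relation: isolating the physically relevant branch, tracking how the sign of $\Us''(0)$ propagates through the Airy geometry, and checking self-consistency of the matched expansion uniformly over the parameter range $\nu k^3\geq 1$, $\sqrt{\nu}k\leq\gamma_-^{-1}$, in which the two small parameters $(\sqrt{\nu}k)^{-1}$ and $(k\Us'(0))^{-1/3}$ need not be of comparable size.
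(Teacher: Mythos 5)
Your proposal addresses only the third bullet of the theorem, and even there it takes a genuinely different and, as written, problematic route. The first two bullets (Criteria~\ref{crit:ibl-global-sufficient} and~\ref{crit:ibl-local-sufficient}) place the inviscid root $\mu$ at order $1$ in the upper half-plane and hinge on a winding-number count of $\Pinv(\mathcal{C}_\eta)$ around $\gamma=(\sqrt{\nu}k)^{-1}$, where the crossings with the real axis occur at the zeros of $\Us''$ and are tallied through $\xi_\pm(\gamma)$ (Proposition~\ref{thm:winding}); your write-up has no analogue of this, and a single wall-critical-layer matching cannot produce it.

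For the third bullet, the paper does not derive a dispersion relation from a matched expansion. It proves a $k$-uniform resolvent estimate for the full viscous problem (Lemma~\ref{thm:homo-resolvent-bounds}), deduces $|\Phi(\mu,k)-\Pinv(\mu)|\leqslant C|k|^{-1/2}$ (Lemma~\ref{thm:convergence-approx-inv}), and then applies Rouch\'e: Lemma~\ref{thm:winding-physical} gives a contour at height $\eta(\gamma)\geqslant d\gamma^{-2}$ whose image under $\Pinv$ winds once around $\gamma$ at distance $\geqslant d\gamma^{-2\kappa}$, and the hypothesis $\nu k^3\geqslant 1$ (with $\sqrt{\nu}k$ small) guarantees $C|k|^{-1/2}$ stays below this margin. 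In that argument the viscous sublayer is never a leading balance: the instability is inviscid, coming from the sign of $g(0)\propto\Us''(0)/\Us'(0)^3$ in the Plemelj limit at small abscissa. Your proposed dispersion relation instead places the Airy ratio $G(\mu)$ in the leading balance against the inviscid terms, which is precisely the Tollmien--Schlichting balance ($k\sim\nu^{-3/8}$) — a regime the paper explicitly distinguishes from the third-bullet instability and treats only formally in Section~\ref{sec:TS}. Over the full range $\nu k^3\geqslant 1$, $\sqrt{\nu}k\leqslant \gamma_-^{-1}$, the Airy variable $\mu\propto c\,k^{1/3}$ sweeps from small to large, so no single Airy-ratio balance can hold uniformly; the matching-plus-Rouch\'e step you allude to is exactly where the missing quantitative control would have to live, and it is what the paper's energy-estimate route was designed to replace (cf.\ the discussion opening Section~\ref{sec:persistence} and the illustrative-only treatment of the approximate-eigenmode method in Section~\ref{sec:method2}). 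Finally, you do not actually exhibit a root of your dispersion relation with $\Im c\gtrsim\nu k^2$, so the claimed growth rate $\Re\lambda\geqslant\eta\nu k^3$ remains unestablished.
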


\begin{thm}[Strong unconditional instability for IBL]
 \label{thm:ibl-strong}
 Let $\Us$ be an arbitrary monotone shear flow.
 There exist positive constants $\bar{\nu},K,S,\eta$
  such that for $\nu \leqslant \bar{\nu}$, $|k| \geqslant K$ 
  and $k \nu^{3/4} \geqslant S$, 
  there exists $\lambda_{k,\nu}$ such that the IBL system~\eqref{LIBL} displays
  $(\lambda_{k,\nu}, k)$ instabilities with
  $\Re \lambda_{k,\nu} \geqslant \eta \nu^{3/4} k^2$.
\end{thm}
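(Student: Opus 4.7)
The plan is to run a standard matched-asymptotics and dispersion-relation argument. After Fourier transform in $x$ and writing $\modeV = ik\vv$, $\modeU = -\vv'$ for a stream function $\vv$, looking for an eigenmode amounts to solving the third-order ODE
\begin{equation*}
\vv''' - (\lambda + ik\Us)\vv' + ik\Us'\vv = \lambda + ik,
\qquad \vv(0)=\vv'(0)=0,\ \vv'(+\infty)=-1,
\end{equation*}
supplemented by the scalar coupling $\int_0^\infty(1+\vv'(y))\,\dd y = 1/(\sqrt{\nu}|k|)$, which plays the role of the dispersion relation for $\lambda$. I will build an approximate eigenmode in the asymptotic regime $|\lambda|\gg k$ (to be checked a posteriori), derive a reduced dispersion relation, show that it admits an unstable root $\lambda_0$ with $\Re\lambda_0 \simeq \sqrt{\ds}\,\nu^{3/4}k^2$, and then promote $\lambda_0$ to a genuine eigenvalue of the full problem.

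\emph{Inviscid part plus viscous sublayer.} Dropping $\vv'''$ yields a first-order ODE whose explicit solution is
\begin{equation*}
\vv_{\inviscid}(y) = -(\lambda+ik)(\lambda+ik\Us(y))\int_0^y\frac{\dd y'}{(\lambda+ik\Us(y'))^2},
\end{equation*}
which satisfies $\vv_{\inviscid}(0)=0$ and $\vv_{\inviscid}'(+\infty)=-1$ but leaves $\vv_{\inviscid}'(0)=-(\lambda+ik)/\lambda\ne 0$. I restore the no-slip condition via the viscous correction $\vv_{\bl}'(y) = \tfrac{\lambda+ik}{\lambda}e^{-\sqrt{\lambda}\,y}$, the decaying mode of $\vv'''\simeq\lambda\vv'$ on the sublayer scale $|\lambda|^{-1/2}$, consistent as long as $|\lambda|\gg k^{2/3}$. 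Using the decay of $1-\Us$ at infinity, the coupling condition reduces to the approximate dispersion relation
\begin{equation*}
-(\lambda+ik)^2 A_\infty(\lambda,k) + \frac{\lambda+ik}{\lambda^{3/2}} = \frac{1}{\sqrt{\nu}\,|k|},\quad
A_\infty := \int_0^\infty\!\!\left[\frac{1}{(\lambda+ik\Us)^2} - \frac{1}{(\lambda+ik)^2}\right]\dd y.
\end{equation*}

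\emph{Solving the reduced dispersion relation.} Under $|\lambda|\gg k$, Taylor expansion inside the integrand gives $(\lambda+ik)^2 A_\infty \simeq 2ik\ds/\lambda$ (with $\ds=\int_0^\infty(1-\Us)\dd y>0$) and $(\lambda+ik)/\lambda^{3/2}\simeq 1/\sqrt{\lambda}$. Setting $\tau=\sqrt{\lambda}$ and clearing denominators, the relation reduces to the quadratic
\begin{equation*}
\tau^2 - \sqrt{\nu}\,|k|\,\tau + 2ik\sqrt{\nu}\,|k|\,\ds = 0,
\end{equation*}
whose relevant root expands for $\nu\ll 1$ as $\tau_0 \simeq |k|\,\nu^{1/4}\sqrt{\ds}\,(1-i) + \tfrac12\sqrt{\nu}\,|k|$. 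Squaring,
\begin{equation*}
\lambda_0 = \tau_0^2 \simeq -2i\ds\sqrt{\nu}\,k^2 + \sqrt{\ds}\,(1-i)\,\nu^{3/4}k^2,
\qquad \Re\lambda_0 \simeq \sqrt{\ds}\,\nu^{3/4}k^2,
\end{equation*}
which is the announced bound. The hypotheses $\nu\le\bar\nu$, $|k|\ge K$ and $k\nu^{3/4}\ge S$ are precisely what is needed: small $\nu$ separates the $\nu^{1/4}$ and $\sqrt{\nu}$ scales in $\tau_0$ (yielding a strictly positive real part after squaring), while $k\nu^{3/4}\ge S$ ensures $|\lambda_0|\gtrsim\sqrt{\nu}\,k^2\gg k\gg k^{2/3}$, legitimating both the Taylor expansion of $A_\infty$ and the sublayer ansatz.

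\emph{Upgrade and main obstacle.} To turn $\lambda_0$ into an exact eigenvalue I would construct two linearly independent exact solutions of the full third-order ODE---a fast viscous mode at the wall and a decaying solution at infinity, built by perturbing $\vv_{\bl}$ and $\vv_{\inviscid}$ respectively---assemble the exact dispersion functional $\mathcal{F}(\lambda) := \int_0^\infty(1+\vv'(y;\lambda))\,\dd y - 1/(\sqrt{\nu}|k|)$, and apply Rouché's theorem (or the implicit function theorem) on a disk of radius $o(\nu^{3/4}k^2)$ centered at $\lambda_0$. The main obstacle will be the singular-perturbation character of the ODE ($\vv'''$ is negligible in the outer region but dominant in the sublayer), together with the need to estimate $\mathcal{F}$ and $\partial_\lambda\mathcal{F}$ uniformly in the stated range of $(\lambda,k,\nu)$; controlling $A_\infty$ in the marginal subregime $|\lambda|/k = O(1)$, where the critical layer $\Us(y)=i\lambda/k$ can approach the real axis, is the most delicate aspect.
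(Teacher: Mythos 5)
Your heuristic reduction and the resulting dispersion relation coincide (up to a notational change of variables $\lambda = \alpha k^2$) with what the paper does in Section 5: you both neglect the transport terms, restore no-slip through a viscous sublayer of width $|\lambda|^{-1/2}$, and arrive at the quadratic $\tau^2 - \sqrt{\nu}|k|\tau + 2\ii k\sqrt{\nu}|k|\ds = 0$, which is exactly $f(\alpha) = 1/\sqrt{\nu}$ in the paper after the substitution $\beta = \sqrt{\alpha}$. Your root and the asymptotics $\Re\lambda_0 \simeq \sqrt{\ds}\,\nu^{3/4}k^2$ are correct (and in fact your constant is right — the formula for $\alpha_\pm$ as printed in the paper carries a spurious factor, which of course does not affect the statement since $\eta$ is arbitrary). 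So the heuristic step and the conclusion $\Re\lambda_{k,\nu} \gtrsim \nu^{3/4}k^2$ are the same.

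Where you genuinely diverge from the paper is in the proposed rigorous upgrade. You plan to construct two exact linearly independent solutions of the third-order ODE — a wall-layer mode and a decaying-at-infinity mode — and then estimate a Wronskian-type dispersion functional $\mathcal{F}(\lambda)$ directly. The paper instead sets up a weighted energy estimate for the resolvent problem (Lemma~\ref{thm:homo-resolvent-bounds}, in particular estimate~\eqref{energy:v'.2}, which gains the crucial factor $|\Re\mu|^{-1} = |\Im\alpha|^{-1}$), and feeds it into Lemma~\ref{thm:high-osc-phi-approx} to bound $\left|\Phi(\ii\alpha k, k) - \tfrac{1}{k}f(\alpha)\right|$ uniformly on the Rouch\'e disk $|\alpha - \alpha_+| \leqslant \eps\nu^{3/4}$. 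The energy route sidesteps the pointwise ODE asymptotics altogether: no explicit Airy/exponential matching, no Wronskian control, just a variational argument whose constants are insensitive to the singular perturbation. Your approach is more classical and would certainly work, but you would need to quantify the decay of each exact mode uniformly in $(\lambda,k,\nu)$, which is precisely the bookkeeping the energy estimate renders unnecessary.

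One remark on your ``main obstacle'' paragraph: the critical-layer difficulty you flag ($|\lambda|/k = O(1)$, so that $\Us(y) = \ii\lambda/k$ touches $(0,1)$) does not actually arise in this theorem. On the Rouch\'e disk one has $|\Im\alpha| \gtrsim \nu^{1/2}$, hence $|\lambda|/k \gtrsim \sqrt{\nu}\,k = (k\nu^{3/4})\,\nu^{-1/4} \geqslant S\,\nu^{-1/4} \to \infty$; the constraint $k\nu^{3/4}\geqslant S$ is exactly what keeps you far from the real axis. The genuine delicate point is rather the $k$- and $\nu$-uniformity of the resolvent bound in the regime where $\Re\mu$ is large and $\Im\mu$ comparatively small, which is what estimate~\eqref{energy:v'.2} is designed for.
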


\begin{thm}[Construction of unstable shear flows]
 \label{thm:construction}
 Both for the IBL and PDT systems, we can build shear flows displaying instabilities
 with chosen behavior within a given spectral domain. More precisely,
 \begin{itemize}
 
 \item PDT system:  let $\mu \in \Gamma_1$, defined in~\eqref{eq:def-gamma1}. 
 There exists a shear flow $\Us$
 and $K > 0$ such that, for $k \geqslant K$,
 there exists a sequence $\lambda_k$ with $\lambda_k \sim -\ii k \mu$ such that 
 the PDT system~\eqref{LDT} displays $(\lambda_k, k)$ instabilities.
 
 \item IBL system: let $\mu \in \Gamma_2$, defined in \eqref{eq:def-gamma2}. 
 For any $\epsilon > 0$, there exists a shear flow $\Us$, $K > 0$ and $\gamma_+ > 0$ such that for, $k \geqslant K$ and $(\sqrt{\nu} k)^{-1} \leqslant \gamma_+$, there
 exists a sequence $\lambda_{\nu,k}$ with $|\lambda_{\nu,k}/k + \ii \mu|
 \leqslant \epsilon$ such that the IBL system \eqref{LIBL}
 displays $(\lambda_{k,\nu},k)$ instabilities.
 
 \item IBL system: let $\mu \in \Gamma_3$, defined in \eqref{eq:def-gamma3}. 
 For any $\gamma > 0$, for any $\epsilon > 0$, 
 there exists a shear flow $\Us$, $K > 0$ and $\gamma_- < \gamma < \gamma_+$
 such that, for $k \geqslant K$, for $\gamma_- \leqslant (\sqrt{\nu} k)^{-1} \leqslant \gamma_+$,
 there exists a sequence $\lambda_{k,\nu}$
 with $|\lambda_{\nu,k}/k + \ii \mu|
 \leqslant \epsilon$ such that the IBL system \eqref{LIBL}
 displays $(\lambda_{k,\nu},k)$ instabilities.

 \end{itemize}
\end{thm}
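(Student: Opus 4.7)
The plan is to reverse-engineer the sufficient instability criteria of \cref{thm:pdt-sufficient,thm:ibl-sufficient}. The proofs of those theorems produce an eigenvalue with asymptotics $\lambda_k/k \to -\ii\mu$, where $\mu$ is a root of a characteristic equation $F_i(\mu;\Us)=0$ obtained from the high-frequency reduction of \eqref{LDT} or \eqref{LIBL}. The sets $\Gamma_1,\Gamma_2,\Gamma_3$ are, by definition, the ranges of admissible $\mu$ as $\Us$ varies over the class of shear flows meeting the corresponding criterion. Thus the content of \cref{thm:construction} is essentially the \emph{surjectivity} of the map $\Us \mapsto \mu$, combined with the transfer of this formal root to a genuine sequence of eigenvalues.

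First, I would write down $F_i(\mu;\Us)$ explicitly in each regime. In the PDT case, and in the IBL case with $\gamma:=(\sqrt{\nu}k)^{-1}\to 0$, the Dirichlet-to-Neumann correction is either absent or subdominant, and $F_i$ encodes the matching between an inviscid Rayleigh profile, a sublayer near the critical altitude $y_c$ defined by $\Us(y_c)=\mu$, and the no-slip condition at $y=0$. In the IBL intermediate regime $\gamma\in[\gamma_-,\gamma_+]$, the function $F_3$ additionally depends on $\gamma$ through the DN contribution. In all three regimes $F_i$ sees $\Us$ only through a finite list of local scalar data, typically the values of $\Us$ and its first derivatives at $y_c$, together with a weighted integral moment of $\Us$.

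Second, for any prescribed $\mu\in\Gamma_i$, I would construct $\Us$ by fixing the required local data at a chosen $y_c$ and then interpolating to a smooth, strictly monotone profile with $\Us(0)=0$, $\Us(+\infty)=1$ and the decay assumptions of \cref{sec:us-assumptions}. Such an interpolation is always possible, since the imposed constraints are finite-dimensional while the admissible class of shear flows is infinite-dimensional. Applying \cref{thm:pdt-sufficient} or \cref{thm:ibl-sufficient} to the constructed $\Us$ then yields the announced sequence of unstable modes. For the third bullet, the extra parameter $\gamma$ requires a continuity argument: one chooses the window $[\gamma_-,\gamma_+]$ around the prescribed $\gamma$ small enough that the root of $F_3(\mu;\Us,\gamma')$ stays within $\epsilon$ of the target $\mu$ throughout the window, which follows from analyticity of $F_3$ in $\gamma$.

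The main difficulty is not the perturbative step, which is a direct corollary of the earlier theorems, but the explicit characterization of $\Gamma_i$ as the range of the finite-dimensional map described above under the monotonicity constraint $\Us'>0$, and the verification that the constructed profile meets all the standing hypotheses so that \cref{thm:pdt-sufficient,thm:ibl-sufficient} actually apply. A secondary obstruction is the possible non-simplicity of the root $\mu$ of $F_i$, which would spoil the Rouché/implicit-function step used in the sufficient-condition theorems; this is circumvented by a generic small perturbation of $\Us$, which preserves $\mu$ within the tolerance $\epsilon$ while ensuring simplicity.
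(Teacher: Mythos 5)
Your proposal misidentifies the nature of the ``characteristic equation'' and, as a consequence, the whole construction strategy. The relevant condition is not a relation depending on a finite list of local data of $\Us$ at a critical altitude $y_c$ with $\Us(y_c)=\mu$ (such a $y_c$ does not even exist for $\Im\mu>0$, since $\Us$ is real-valued). It is the \emph{global} integral condition $\Pinv(\mu)=\gamma_i$, where
\begin{equation*}
\Pinv(\mu)=\int_0^\infty \frac{1-\Us(y)}{|\mu-\Us(y)|^4}\,p_\mu(\Us(y))\,\dd y ,
\end{equation*}
and $\gamma_1=\ds$, $\gamma_2=0$, $\gamma_3=\gamma$. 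The key observation the paper exploits is that the integrand depends on $y$ only through $\Us(y)$, so the change of variables $u=\Us(y)$ converts the constraint into $\int_0^1 (1-u)|\mu-u|^{-4}p_\mu(u)\,h(u)\,\dd u=\gamma_i$, where $h=(\Us^{-1})'$ is an essentially arbitrary positive weight. The surjectivity step is then a convex-geometry argument: one shows that for $\mu\in\Gamma_i$ the curve $q_\mu([0,1])$ (resp.\ $p_\mu([0,1])$) contains three points whose positive cone is all of $\C$ (\cref{thm:qmu123}), and one builds $h$ as a positive combination of bump functions at those abscissae plus a $1/(1-x)$ tail to enforce decay (\cref{thm:phi123}). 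Your proposal has none of this; ``fixing finitely many local data at $y_c$ and interpolating'' does not control a global integral, so the construction would not go through. Also, the sets $\Gamma_i$ are given by the paper as explicit regions, and the content of the theorem is precisely that they lie inside the attainable range; taking ``by definition $\Gamma_i$ is the range'' begs the question.

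The transfer to viscous eigenmodes is also handled differently. You propose to invoke \cref{thm:pdt-sufficient} or \cref{thm:ibl-sufficient} on the constructed profile, but the paper explicitly rejects that route (\cref{sec:construction-viscous}): verifying the winding-number criteria for the constructed $\Us$ could be delicate, and even then those theorems do not give the precise asymptotics $\lambda_k\sim -\ii k\mu$. Instead, the paper argues directly: since $\Pinv$ is holomorphic and non-constant, $\Pinv(\mu+z)-\gamma_i\sim c z^n$ for some $n\geqslant 1$; one takes shrinking circles of radius $r_k=r|k|^{-\alpha}$ around $\mu$, compares $\Phi(\cdot,k)$ to $\Pinv$ via \cref{thm:convergence-approx-inv} (error $O(k^{-1/2})$), and applies Rouch\'e. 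This automatically handles the possibility $n>1$, so your final concern about non-simple roots and the proposed generic perturbation fix are unnecessary. Finally, your continuity-in-$\gamma$ argument for $\Gamma_3$ should instead be replaced by the observation that the image circles already wind around a small \emph{interval} $[\gamma_-,\gamma_+]$ with positive margin, which then persists under the viscous perturbation.
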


\subsection{Comments on the main results}

\begin{remark}[Examples of unstable shear flows] \label{rk:examples}
Our Criteria~1, 2 3 for instability are of practical interest: they amount to checking the winding number of a closed plane curve around some points, and this can be done numerically by plotting the curve: see paragraph \ref{subsec:counting}. Moreover, we are able to give explicit examples of shear flows $\Us$ meeting Criteria~1, 2 or 3, to which \cref{thm:pdt-sufficient}
 or \cref{thm:ibl-sufficient} applies. For instance: 
 \begin{itemize}

 \item  Let $\alpha \in [3, 4)$, and define
 \begin{equation*}
  \hs(u) := - \log(1{-}u) - \alpha \frac{u^2}{2}.
 \end{equation*}
 Then $\hs$ is a monotone bijection from $[0, 1)$ to $[0, + \infty)$. Define $\Us:=\hs^{-1}$. Then $\Us$ satisfies Criteria 1 and 2 (see \cref{ssec:crit2-spec} for a proof).

 \item Let $\Us$ be a monotone shear flow satisfying the decay assumptions of \cref{sec:us-assumptions}. Assume furthermore that $\Us''(0)>0$ and that $\Us''$ vanishes at a single point on $(0, + \infty)$. Then $\Us$ satisfies \cref{crit:ibl-local-sufficient}.

\end{itemize}

Furthermore, let us insist that the instability pictured in Theorem \ref{thm:ibl-strong} is valid for any shear flow. This is related to the fact that the instability in Theorem \ref{thm:ibl-strong} is induced by viscosity, and is not present at the inviscid level (see Remark \ref{rk:famouscriteria}).

\end{remark}

\begin{remark}[Physically relevant regime] \label{rk:physical}
 The approximation of solutions of the Navier-Stokes system 
 \eqref{eq:ns} by solutions of the Prandtl system \eqref{Prandtl}
 within the vicinity of the boundary requires to assume that
 one can neglect the horizontal diffusion in front of the other
 terms. This is the case when $\sqrt{\nu} k \ll 1$. 
 The first instability of \cref{thm:ibl-sufficient} corresponds to a regime
 where $\sqrt{\nu} k \gg 1$. Hence, it is a theoretical mathematical regime which is located far away from the ``physical'' regime $\sqrt{\nu} k \ll 1$ for
 which the boundary layer equations are derived.
 The second instability of \cref{thm:ibl-sufficient} corresponds to values
 of $\sqrt{\nu} k \approx 1$ and is therefore near physical.
 The third instability of \cref{thm:ibl-sufficient} holds precisely
 in the physical regime $\sqrt{\nu} k \ll 1$. 
 Its instability rate $\eta \nu k^3 t$ can be seen as $\eta (\sqrt{\nu} k)^2 k t$.
\end{remark}

\begin{remark}[Comparison to other instability criteria] \label{rk:famouscriteria}
Our instability Criteria~1, 2 3 are based on an inviscid approximation of our boundary layer models. From this perspective, it is worth relating them to well-known  criteria for shear flow instability within the Euler equation. Following \cite[Chapter 4, Section 22]{DrazinReid}, one has notably in mind:
  \begin{description}
  \item[i)]  Rayleigh's criterion: a necessary condition for point spectrum instability is that there exists  $y_s\in \R$ with $U_s''(y_s)=0$;
  \item[ii)]  Fj{\o}rtoft's criterion: a necessary condition for point spectrum instability is that there exists $y\in \R$ such that $U_s''(y) (U_s(y)-U_s(y_s)) < 0$, where $y_s$ is an inflexion point.
  \end{description}
  
Indeed, it will be clear from the formulation of Criteria 1, 2 and 3 (see page \pageref{crit:pdt-sufficient}) that our monotonic shear flow profiles
satisfy both Rayleigh and Fj{\o}rtofts criteria. Still, one of the achievements of the present paper is that our criteria are sufficient for instabilities, while those just above are only necessary conditions. 

Let us stress that such inviscid instabilities are described partially in article \cite{tutty-cowley-1986-stability-unsteady-interactive-boundary-layer}, with consideration of the fixed displacement thickness system and the triple deck system. More precisely, the paper \cite{tutty-cowley-1986-stability-unsteady-interactive-boundary-layer} contains  adapted versions of the necessary conditions i) and ii) for inviscid instability, and numerical evidence of growing modes for some profiles. Still, the authors remark that their Rayleigh's and Fjortoft's type conditions  are not sufficient for instability. Moreover, as regards viscous instabilities, these conditions are seen in the numerics to be unnecessary, with  unstable flows without inflexion points. Note that since the full IBL model is not considered in \cite{tutty-cowley-1986-stability-unsteady-interactive-boundary-layer}, Theorems \ref{thm:ibl-sufficient} and \ref{thm:ibl-strong} are completely new, even at the numerical level. In particular, the worse instability described in the results above, namely the one from Theorem \ref{thm:ibl-strong}, has not been described previously, to the best of our knowledge.
   
  We also refer to 
  \textcite{cebeci-1983-time-dependent-steady-inverse-boundary-layer} for previous numerical works in the PDT model.
\end{remark}

\begin{remark}[Instability sources]
The instability mechanisms described in Theorems \ref{thm:pdt-sufficient} and \ref{thm:ibl-sufficient} on one hand, and in Theorem \ref{thm:ibl-strong} on the other hand are of a very different nature. In Theorems \ref{thm:pdt-sufficient} and \ref{thm:ibl-sufficient}, the instability is of an
inviscid nature, and transport terms are of paramount importance. On the contrary, in \cref{thm:ibl-strong}, 
 the viscous term $-\partial^2_y U$ plays a
crucial role, through a boundary layer phenomenon in the limit
$k \rightarrow +\infty$. The transport terms are only perturbative.
This
difference reflects into the dependence of the eigenvalue with respect
to $k$.  In the former case,
the growth rate scales linearly with $k$, like in the Kelvin-Helmoltz
instability. In the latter case, the real part grows like $k^2$, similarly
to what happens for the backward heat equation. Let us also point out that the instability depicted in Theorem \ref{thm:ibl-strong}
is valid for any shear flow, even with strictly concave $U_s$. For Theorems \ref{thm:pdt-sufficient} and \ref{thm:ibl-sufficient}, relying on Criteria 1, 2, 3 page \pageref{crit:pdt-sufficient}, a loss of concavity is needed. These results come
from a detailed analysis of the {\em inviscid} version of the IBL
system, which is reminiscent of the approach of Penrose for the
stability of homogeneous equilibria of the Vlasov equation. Still, we insist on the fact that we establish the existence of exact unstable eigenmodes for the fully viscous models. In other words, although the instability can be understood perturbatively, we can go beyond the construction of approximate unstable eigenmodes. This is not an obvious task: it will be clear from our analysis that the high frequency asymptotics leads to a singular perturbation problem, for which rigorous perturbative arguments are uneasy. 
\end{remark}

\begin{remark}[Consequences]
All these high-frequency phenomena cast some doubt on the numerical stability of the unsteady models \eqref{Euler}-\eqref{IBL}-\eqref{coupling} and \eqref{DT}. Let us further stress that the unstable eigenmodes constructed in \eqref{LIBL} do not correspond to the classical Tollmien-Schlichting modes of the Navier-Stokes equations. Those modes  correspond to  the regime $k \sim \nu^{-3/8}$, and $\Re \lambda \sim \nu^{-1/4}$ (see \cref{sec:TS} for a discussion).  From this perspective, the IBL system differs from another famous extension of the Prandtl equation, namely the triple deck system, which allows to recover Tollmien-Schlichting like modes but does not seem to suffer from unrealistic instabilities \cite{Smith79}. 
\end{remark}

\subsection{Strategy of proof}
We outline here the main steps in proving our results.
\paragraph{Reduction to an ODE problem}

Our main theorems all rely on the construction of solutions
of the form~\eqref{eq:eigenmode-time-x-form}. 
Using the incompressibility condition, we can express
these solutions as
\begin{equation}
 \big( U(t,x,y), V(t,x,y), u_e(t,x) \big)
 =
 \ee^{\lambda_k t} \ee^{\ii k x}
 \big(1 - \vv'(y), \ii k (\vv(y)-y), 1 \big),
\end{equation}
where $\vv$ is a smooth function of the normal variable. 
Plugging this expression in the Prandtl boundary layer equations imposes that $\vv$ is the solution to
\begin{equation}
 \label{eq:sol-vv-problem}
 \begin{aligned}
 & \left(\mu - \Us\right) \vv' + \Us' \vv
 - \frac{\ii}{k} \vv''' = F,\\
 & \vv\vert_{y=0} = 0, \quad \vv'\vert_{y=0} = 1,
 \quad \lim_{y \rightarrow +\infty} \vv' = 0,
 \end{aligned}
\end{equation}
where $\mu := \ii \lambda_k / k$ and
\begin{equation} \label{eq:def-f}
 F(y) := 1 - \Us(y) + y \Us'(y).
\end{equation}

In fact, although \eqref{eq:sol-vv-problem} is a linear ODE, the existence of solutions with appropriate behavior at infinity is not at all obvious, and requires {\it a priori} some condition on $\mu$ and $k$. Namely, we will derive good  {\it a priori} estimates on the solutions of \eqref{eq:sol-vv-problem} in the regime $\Im \mu>0$ and $k$ large enough. These estimates allow then to prove existence and uniqueness of solutions thanks to a Lax-Milgram type argument (see \cref{thm:homo-resolvent-bounds}).

System~\eqref{eq:sol-vv-problem} is supplemented with the
boundary conditions
\begin{equation} \label{eq:self-consistency-pdt}
 \lim_{y \to +\infty} \vv(y) = \ds
\end{equation}
for the PDT system and
\begin{equation} \label{eq:self-consistency-ibl}
 \lim_{y \to +\infty} \vv(y) = \frac{1}{\sqrt{\nu} |k|}
\end{equation}
for the IBL system. 
The spectral stability analysis amounts to determining
whether the solutions to~\eqref{eq:sol-vv-problem} 
verify the extra boundary conditions \eqref{eq:self-consistency-pdt} or \eqref{eq:self-consistency-ibl}. In view of these conditions, a crucial quantity is
\begin{equation} \label{eq:def-Phi}
\Phi(\mu, k):= \lim_{y\to \infty } \vv(y).
\end{equation}
Our general idea to show instability is to find sufficient criteria on $\Us$ so that there exists a closed curve $\mathcal C$, embedded in the upper half complex plane, 
such that the winding number of $\Phi(\mathcal C, k)$ around some $\gamma \in \R_+$ is positive ($\gamma = (\sqrt{\nu}k)^{-1}$ in the case of the IBL system or $\gamma = \Delta_s$ in the case of the PDT system). Using classical results of complex analysis (see \cite[Chapter~7]{needham1998visual}), we then deduce the existence of $\mu\in \C$ with $\Im \mu>0$ such that $\Phi(\mu, k)= (\sqrt{\nu}k)^{-1}$ (for the IBL system) or  $\Phi(\mu, k)= \Delta_s$ (for the PDT system).

\paragraph{Instabilities present at the inviscid level}

In the regime where $\mu$ is of order $1$, the considered instabilities
come from the inviscid equation. We can compute explicitly
the solutions $\vv_\inviscid$ to \eqref{eq:sol-vv-problem} when 
$|k| = \infty$, and define   $\Pinv(\mu):=\lim_{y\to \infty }\vv_\inviscid(y)$. We then consider the winding number of $\Pinv(\mathcal C)$ around $\gamma$ for some specific closed curves $\mathcal C$ embedded in the upper half plane. 
We characterize the sufficient compatibility
conditions between $\mu$ and $\Us$ so that this winding number is positive. 
This is the object
of \cref{sec:inviscid}. 

Then, we prove that these instabilities
persist at the viscous level for large enough
tangential frequencies. This is done in \cref{sec:persistence}
by proving that the solutions to \eqref{eq:sol-vv-problem}
are close to the solutions of its inviscid analogue.
We show that we can build unstable shear flows in
\cref{sec:construction}.

\paragraph{Viscosity-induced instabilities for IBL}

For the IBL model, we also study a regime where $\mu$ is of order $k$. In this regime, the instabilities are caused by
the viscous term. We first construct explicit approximate solutions of the model, for which we have neglected the transport term. We prove that these approximate solutions exhibit exponential growth. Then, we prove that these instabilities persist when the transport term is restored, which yields Theorem \ref{thm:ibl-strong}, see
\cref{sec:strong} for a proof.

\subsection{Assumptions on considered shear flows}
\label{sec:us-assumptions}

Throughout this paper, we assume that the shear flow $\Us \in C^\infty(\R_+, [0,1))$ satisfies $\Us(0) = 0$,  $\Us(+\infty) = 1$, $\Us' > 0$ and $U''_s(0) \neq 0$. We also assume that $\Us''$ has a finite number of zeros in $[0, +\infty)$. 
We introduce a positive weight function $\omega$ such that
\begin{equation} \label{hyp:rho}
\omega \geqslant 1
\quad \textrm{and} \quad
C_\omega := \max \left(
\left\| \frac{1}{\omega} \right\|_{L^1},
\left\| \frac{\omega'}{\omega} \right\|_\infty,
\left\| \frac{\ln \omega(y)}{1+y} \right\|_\infty
\right) < + \infty.
\end{equation}
This class includes both exponential weights such as $y \mapsto \exp(y)$ and polynomial weigths such as $y \mapsto 1 + y^2$. Once such a $\omega$ is fixed, we define
the weighted space $L^2(\omega)$ with the norm
\begin{equation}
\| \psi \|^2_{L^2(\omega)} :=
\int_0^\infty |\psi|^2 \omega.
\end{equation}
Although the domain is not bounded, the following inequality is easily satisfied:
\begin{lemma} \label{thm:poincare}
 For any $\psi : \R_+ \mapsto \C$ with $\psi'\in L^2(\omega)$ and $\psi(0) = 0$, one has
 \begin{equation} 
 \label{estimate:l2rho_linfty}
 \| \psi \|_{\infty} \leqslant C_\omega \| \psi' \|_{L^2(\omega)}.
 \end{equation}
\end{lemma}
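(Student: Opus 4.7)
The plan is to prove this weighted Poincaré-type estimate by a direct combination of the fundamental theorem of calculus and Cauchy--Schwarz. Since $\psi(0) = 0$ and $\omega \geqslant 1$, the hypothesis $\psi' \in L^2(\omega)$ gives $\psi' \in L^2_{\mathrm{loc}}(\R_+) \hookrightarrow L^1_{\mathrm{loc}}(\R_+)$, so for every $y \geqslant 0$ one may write $\psi(y) = \int_0^y \psi'(s)\, \dd s$.

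Next, I would split the integrand as $\psi'(s) = \omega(s)^{-1/2} \cdot \omega(s)^{1/2} \psi'(s)$ and apply Cauchy--Schwarz to obtain
\begin{equation*}
|\psi(y)| \;\leqslant\; \int_0^y |\psi'(s)| \, \dd s \;\leqslant\; \left( \int_0^y \frac{\dd s}{\omega(s)} \right)^{1/2} \left( \int_0^y |\psi'(s)|^2 \omega(s) \, \dd s \right)^{1/2} \;\leqslant\; \left\| \frac{1}{\omega} \right\|_{L^1}^{1/2} \| \psi' \|_{L^2(\omega)}.
\end{equation*}
Taking the supremum over $y \geqslant 0$ and invoking the definition of $C_\omega$ as a maximum that dominates $\| 1/\omega \|_{L^1}$ yields the stated bound (the square root is harmless since in the weight class of interest $C_\omega \geqslant 1$, so $\sqrt{C_\omega} \leqslant C_\omega$).

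There is no real obstacle here: this is a textbook one-dimensional Morrey-type embedding adapted to the weight $\omega$. The other two ingredients entering the definition of $C_\omega$, namely $\|\omega'/\omega\|_\infty$ and $\|\ln \omega(y)/(1+y)\|_\infty$, play no role in this particular lemma; they are tailored for the finer weighted estimates appearing later, where one must differentiate through $\omega$ (the first quantity) or absorb exponentially growing factors generated by the Airy/transport analysis into the weight (the second quantity). It is therefore natural to lump all three into a single constant $C_\omega$ at this stage, even though only the first is strictly needed for \cref{thm:poincare}.
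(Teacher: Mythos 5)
Your argument is the natural one and is certainly what the authors had in mind (the paper states the lemma without proof, calling it ``easily satisfied''): write $\psi$ as a primitive of $\psi'$, split the integrand using $\omega^{\pm 1/2}$, and apply Cauchy--Schwarz to get $\|\psi\|_\infty \leqslant \|1/\omega\|_{L^1}^{1/2} \|\psi'\|_{L^2(\omega)}$. Up to this point everything is correct.

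The only weak spot is your last step, $\|1/\omega\|_{L^1}^{1/2} \leqslant C_\omega^{1/2} \leqslant C_\omega$, where you invoke $C_\omega \geqslant 1$. That inequality does hold for the paper's two model weights $\omega(y) = e^y$ and $\omega(y) = 1+y^2$, but it is \emph{not} a consequence of \eqref{hyp:rho}: e.g.\ $\omega(y) = e^{0.7(1+y)}$ satisfies all the hypotheses and has $\|1/\omega\|_{L^1} = e^{-0.7}/0.7 \approx 0.71$, $\|\omega'/\omega\|_\infty = \|\ln\omega/(1+y)\|_\infty = 0.7$, hence $C_\omega \approx 0.71 < 1$. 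Moreover the sharp constant in the estimate is exactly $\|1/\omega\|_{L^1}^{1/2}$ (take $\psi' = 1/\omega$, for which equality holds in Cauchy--Schwarz), and here $\|1/\omega\|_{L^1}^{1/2} \approx 0.84 > C_\omega$, so with this weight the lemma as literally written fails. This points to a small imprecision in the paper's normalization of $C_\omega$ (it is used elsewhere with the same looseness, e.g.\ in the bound $\omega(y) \leqslant C_\omega e^{C_\omega y}$, which requires $e^{C_\omega} \leqslant C_\omega$) rather than to a flaw in your approach; nothing downstream depends on the precise value. To make your proof watertight, either add $C_\omega \geqslant 1$ as a standing assumption, or conclude with $\max(1,C_\omega)$ in place of $C_\omega$.
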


In the sequel, we assume that the weight function $\omega$ is fixed
and that the considered shear flows satisfy the following decay properties:
\begin{gather}
 \label{eq:hyp-ds}
 \ds := \int_0^{+\infty} (1 - \Us)\, \dd y < \infty, \\
 \label{eq:hyp-uj-infty}
 \Us^{(j)} \in L^\infty
 \quad \textrm{for} \quad 1 \leqslant j \leqslant 3, \\
 \label{eq:hyp-uj-2}
(1-\Us)\in L^2(\omega),\quad y \mapsto (1+y) \Us^{(j)}(y) \in L^2(\omega)
 \quad \textrm{for} \quad 1 \leqslant j \leqslant 3.
\end{gather}
Furthermore, we assume the following decay ratio properties:
\begin{gather}
 \label{eq:hyp-us-ratio1}
 \lim_{y\to\infty} \frac{(1-U_s(y))^2}{U_s'(y)} = 0, \\
 \label{eq:hyp-us-ratio2}
 \lim_{y\to\infty} (1-U_s(y))\, y = 0, \\
 \label{eq:hyp-us-ratio3}
 \frac{(1-U_s)^2}{(U_s')^2}\, U_s'' \in L^1(\R_+).
\end{gather}
Finally, we also assume that there exists $0 < \kappa < \frac{1}{2}$ 
and $y_\kappa, c_\kappa > 0$ such that
\begin{equation} \label{eq:hyp-kappa-Us}
(1-\Us(y))^{2-\kappa} \Us''(y) \leqslant - c_\kappa (\Us'(y))^3
\quad \textrm{for} \quad y \geqslant y_\kappa.
\end{equation}
This property only depends on the asymptotic behavior of the
shear flow. It is for example satisfied if the convergence towards
the limit value is polynomial, exponential, or exponential of a polynomial.
It also implies that $\Us''(y)$ is negative for $y \geqslant y_\kappa$.

\section{Characterization of inviscid instabilities}
\label{sec:inviscid}
The purpose of this section is to study the inviscid problem
\begin{equation} \label{eq:sol-vvinv-problem}
 \left(\mu - \Us\right) \vv_{\inviscid}' + \Us' \ \vv_{\inviscid} = F, \quad
 \vv_{\inviscid}\vert_{y=0} = 0,
\end{equation}
where $F$ was defined in \eqref{eq:def-f}. Indeed, in the limit of high tangential frequencies, we expect the third order term in \eqref{eq:sol-vv-problem} to play a less important role and we approximate
\eqref{eq:sol-vv-problem} by \eqref{eq:sol-vvinv-problem}.
Let $\Pinv(\mu):=\lim_{y\to \infty}\vv_{\inviscid}(y)$. In order for $\vv_\inviscid$ to satisfy the matching boundary condition at infinity 
(\eqref{eq:self-consistency-pdt} in the PDT case or \eqref{eq:self-consistency-ibl} in
the IBL case), we investigate in the following subsections whether the equation
$\Pinv(\mu)=\gamma$ can have roots for some $\gamma \in \R_+$, with
$\Im \mu > 0$ so that it leads to an instability. To that end, we look  for  closed curves 
$\mathcal C \subset \{z\in \C, \Im(z)>0\}$ such that the winding number of $\Pinv(\mathcal C)$ around $\gamma$ is positive. This amounts eventually to counting the number of crossings of $\Pinv(\mathcal C)$ with the real axis, and we exhibit criteria on $\Us$ that ensure that the winding number is positive.

\subsection{Inviscid spectral problem}

Let us first point out that \eqref{eq:sol-vvinv-problem} can be solved explicitely as
\begin{equation}
\label{eq:inviscid-approximation-v}
\vv_{\inviscid}(y) = \left( \mu - \Us(y)\right)
\int_0^y \frac{F}{\left( \mu - \Us\right)^2}
\end{equation}
which provides the formula
\begin{equation} \label{eq:phi_inv}
 \Pinv(\mu) = \lim_{y \to +\infty} \vv_{\inviscid}(y)
 = \left( \mu - 1\right) \int_0^\infty \frac{F}{\left( \mu - \Us\right)^2}.
\end{equation}
 We start with a few technicals results, whose proof are postponed to the appendix. First, the function $\Pinv$ is well-behaved outside 
of the range of $\Us$.  Moreover, potential roots of $\Pinv(\mu) = \gamma$ for $\gamma \in \R_+$ 
and $\Im \mu > 0$ cannot be located anywhere in the complex plane. They must
be located within the disc of radius $\frac{1}{2}$ and centered at $\frac{1}{2}$.

\begin{lemma} 
 \label{thm:possible-mu}
 The map $\mu \to \Pinv(\mu)$ is holomorphic on $\C \setminus [0,1]$.
 Furthermore, for $\mu = a + \ii b$ with $b > 0$ and $b^2 \geqslant a(1-a)$, there holds $\Im \Pinv(\mu) < 0$.
\end{lemma}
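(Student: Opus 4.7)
The plan has two parts: holomorphy via dominated convergence, and the sign of $\Im\Pinv$ via two integrations by parts that convert the integrand into a manifestly signed expression.

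For holomorphy, formula~\eqref{eq:phi_inv} makes sense because $F=(1-\Us)+y\Us'\in L^1(\R_+)$: $(1-\Us)\in L^1$ by~\eqref{eq:hyp-ds}, and $\int_0^\infty y\,\Us'\,\dd y = \ds$ follows by IBP thanks to~\eqref{eq:hyp-us-ratio2}. For $\mu$ in any compact $K\subset\C\setminus[0,1]$, the denominator satisfies $|\mu-\Us(y)|\geq\dist(K,[0,1])>0$, so the integrand is dominated by $C_K|F|\in L^1$. Morera's theorem combined with dominated convergence then yields holomorphy.

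For the imaginary part with $\mu=a+\ii b$, $b>0$, expanding $(\mu-\Us)^{-2}$ yields
\[
\Im\Pinv(\mu)=b\int_0^\infty\frac{F(y)\,\bigl[(a-\Us)(2-a-\Us)-b^2\bigr]}{|\mu-\Us|^4}\,\dd y.
\]
The bracket is sign-indefinite (it is positive for $\Us\approx 0$ whenever $\mu$ lies in the strip $0<a<1$), so a pointwise argument fails. I would change variables $t=\Us(y)$ with $\hs:=\Us^{-1}$: this turns $F(y)\,\dd y$ into $G(t)\,\dd t$ with $G(t):=(1-t)\hs'(t)+\hs(t)$, while a direct computation verifies
\[
\frac{(a-t)(2-a-t)-b^2}{|\mu-t|^4}=\frac{\dd}{\dd t}\!\left[\frac{1-t}{|\mu-t|^2}\right].
\]
A first IBP on $[0,1]$, using $G'(t)=(1-t)\hs''(t)$ and the vanishing of the boundary at $t=1$ (which rests on $(1-t)^2\hs'(t)\to 0$ and $(1-t)\hs(t)\to 0$ from~\eqref{eq:hyp-us-ratio1}--\eqref{eq:hyp-us-ratio2}), yields
\[
\Im\Pinv(\mu)=-\frac{b}{\Us'(0)\,|\mu|^2}-b\int_0^1\frac{(1-t)^2\,\hs''(t)}{|\mu-t|^2}\,\dd t.
\]

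A second IBP on this integral, shifting $\hs''$ back onto $\hs'$ and using the identity $\frac{\dd}{\dd t}\bigl[(1-t)^2/|\mu-t|^2\bigr]=-2(1-t)\bigl[(1-a)(t-a)+b^2\bigr]/|\mu-t|^4$, produces a boundary contribution at $t=0$ that exactly cancels the $-b/(\Us'(0)|\mu|^2)$ term, leaving
\[
\Im\Pinv(\mu)=-2b\int_0^1\frac{(1-t)\,\hs'(t)\,\bigl[(1-a)(t-a)+b^2\bigr]}{|\mu-t|^4}\,\dd t.
\]
The factor $P(t):=(1-a)(t-a)+b^2$ is affine in $t$, so $P\geq 0$ on $[0,1]$ reduces to checking $P(0)=b^2-a(1-a)\geq 0$ (exactly the hypothesis) and $P(1)=(1-a)^2+b^2>0$ (trivial). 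Since $(1-t)\hs'(t)>0$ on $(0,1)$ and $P$ is strictly positive near $t=1$, the integrand is strictly positive on a set of positive measure, so $\Im\Pinv(\mu)<0$.

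The main obstacle is the realization that two IBPs are needed: the naive pointwise bracket $(a-\Us)(2-a-\Us)-b^2$ is sign-indefinite in the very region of interest, and the correct sign only emerges after two successive IBPs transmute it into the affine quantity $(1-a)(t-a)+b^2$, whose minimum over $[0,1]$ matches the hypothesis threshold $a(1-a)$ exactly. The clean cancellation of the two $\pm b/(\Us'(0)|\mu|^2)$ boundary contributions at $t=0$, and the justification of the vanishing of all boundary terms at $t=1$ via~\eqref{eq:hyp-us-ratio1}--\eqref{eq:hyp-us-ratio2}, form the technical core.
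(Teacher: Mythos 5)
Your proof is correct, and it lands on exactly the same pointwise-signed integrand as the paper's, namely $-2b\,(1-\Us)\,[(1-a)(\Us-a)+b^2]\,/\,|\mu-\Us|^4$; only the route differs. The paper performs a single integration by parts in the original variable $y$, applied to the $y\Us'$ part of $F$ \emph{before} separating real and imaginary parts, which converts \eqref{eq:phi_inv} into $\Pinv(\mu)=\int_0^\infty(1-\Us)(2\mu-1-\Us)(\mu-\Us)^{-2}\dd y$; the imaginary part of the integrand is then manifestly proportional to $(1-\Us)[(1-a)(\Us-a)+b^2]$, and the sign conclusion is immediate under $b^2\geqslant a(1-a)$. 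You instead change variables to $t=\Us(y)$ first, take the imaginary part (which is indeed sign-indefinite in that form), and then perform two integrations by parts in $t$ to reach the same affine quantity $P(t)=(1-a)(t-a)+b^2$, exploiting the happy cancellation of the $\pm b/(\Us'(0)|\mu|^2)$ boundary terms at $t=0$. Both calculations are valid, and your verification of the vanishing boundary terms at $t=1$ correctly invokes \eqref{eq:hyp-us-ratio1}--\eqref{eq:hyp-us-ratio2}; note though that your method uses \eqref{eq:hyp-us-ratio1} at two places (once in each IBP), whereas the paper's single IBP in $y$ needs only \eqref{eq:hyp-us-ratio2} for the boundary term $y(1-\Us)\to0$. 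The point in your concluding remark that ``two IBPs are needed'' is therefore an artifact of the order of operations: if one integrates by parts before splitting $\Pinv$ into real and imaginary parts, a single IBP already does the job.
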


We now turn towards the behavior of $\Pinv$ on $[0,1]$. Although we could fear that $\Pinv$ has a singular behavior in the vicinity
of $[0,1]$, its turns out that we can compute, for any given abscissa 
 $a \in (0,1)$, the limit of $\Pinv(\mu)$ as $\mu \to a$. We obtain this
limit using the so-called Plemelj formula, of which we use the 
following quantitative version, uniform on any subinterval.

\begin{lemma}[Plemelj formula] \label{thm:plemelj}
 Let $0 < a_0 < a_1 < 1$. There exists a constant $C > 0$ such that, 
 for $a \in [a_0, a_1]$ and $b > 0$ small enough, one has
 \begin{equation} \label{eq:plemelj}
  \left| \Pinv(a+\ii b) - G(a) \right| \leqslant C \sqrt{b},
 \end{equation}
 where we define, for $a \in (0,1)$,
 \begin{align}
  \label{eq:def-g}
  g(a) & := \frac{(1-a)^2}{\Us'(\Us^{-1}(a))^3} \Us''(\Us^{-1}(a)), \\
  G(a) & := \frac{1}{a}\frac{1}{\Us'(0)} 
  + \PV \int_0^1 \frac{g(u)}{u-a} \dd u
  + \ii \pi g(a),
 \end{align}
 where $\PV$ denotes the usual principal value operator.
\end{lemma}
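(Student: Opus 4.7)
My approach is to first bring $\Pinv(\mu)$ into the form of an explicit smooth term plus a Cauchy integral with density $g$, after which the claim reduces to a classical quantitative Plemelj--Sokhotski estimate. Starting from \eqref{eq:phi_inv}, I would perform the change of variables $u = \Us(y)$, obtaining $\Pinv(\mu) = (\mu-1)\int_0^1 H(u)/(\mu-u)^2\,\dd u$ with
\begin{equation*}
 H(u) := \frac{1-u}{\Us'(\Us^{-1}(u))} + \Us^{-1}(u).
\end{equation*}
I then integrate by parts on $[0, 1-\varepsilon]$ using the primitive $u \mapsto 1/(\mu-u) - 1/(\mu-1)$ of $u \mapsto 1/(\mu-u)^2$. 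The subtraction of the constant $1/(\mu-1)$ is essential: it ensures that the boundary term at $u = 1-\varepsilon$ vanishes as $\varepsilon\to 0^+$, despite $H(u) \to +\infty$, thanks to the decay assumptions \eqref{eq:hyp-us-ratio1}--\eqref{eq:hyp-us-ratio2}. A direct calculation using $(\Us^{-1})'(u) = 1/\Us'(\Us^{-1}(u))$ yields the clean algebraic identity $(1-u) H'(u) = -g(u)$, while the boundary at $u=0$ produces $1/(\mu(\mu-1)\Us'(0))$. Multiplying by $(\mu-1)$ and simplifying then gives the key representation
\begin{equation*}
 \Pinv(\mu) = \frac{1}{\mu\,\Us'(0)} + \int_0^1 \frac{g(u)}{u-\mu}\,\dd u,
\end{equation*}
valid for all $\mu \in \C \setminus [0,1]$, where the integrability of $g$ on $[0,1]$ making this well defined is precisely \eqref{eq:hyp-us-ratio3}.

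With this Cauchy representation in hand, the lemma reduces to a quantitative Plemelj--Sokhotski estimate. Writing $\mu = a + \ii b$, I would split
\begin{equation*}
 \int_0^1 \frac{g(u)}{u-\mu}\,\dd u - \PV\int_0^1 \frac{g(u)}{u-a}\,\dd u - \ii\pi g(a) \;=\; I_1 + g(a)\, I_2,
\end{equation*}
with $I_1 := \int_0^1 (g(u)-g(a))\cdot \ii b / [(u-a-\ii b)(u-a)]\,\dd u$ and $I_2$ the corresponding expression for $g \equiv 1$, a purely logarithmic remainder. Explicit computation of the antiderivatives of $I_2$ through the principal branch of the logarithm gives $|I_2| = O(b)$, uniformly for $a \in [a_0, a_1]$. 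For $I_1$, since $\Us$ is smooth and $\Us'>0$ on $\Us^{-1}([a_0,a_1])$, the density $g$ is smooth on a neighborhood of $[a_0,a_1]$ and integrable on $[0,1]$; splitting the integration domain at the scale $|u-a| \sim \sqrt{b}$, using $|u-a-\ii b| \geq b$ on the near part and $|u-a-\ii b| \geq |u-a|$ on the far part, together with the Lipschitz bound $|g(u)-g(a)| \leq L|u-a|$, yields $|I_1| \leq C(\sqrt{b} + b\,|\log b|) \leq C\sqrt{b}$. The remaining contribution $1/(\mu\,\Us'(0))$ clearly converges to $1/(a\,\Us'(0))$ with error $O(b)$, completing the proof.

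The main anticipated obstacle is not the Plemelj estimate itself, which is standard once the representation is available, but the rigorous justification of the integration by parts at $u = 1$. There $H(u) \to +\infty$, so one must work on $[0, 1-\varepsilon]$ and control the boundary term $H(1-\varepsilon) \cdot [1/(\mu - 1 + \varepsilon) - 1/(\mu-1)]$, which behaves like $\varepsilon\, H(1-\varepsilon)/(\mu-1)^2$ for small $\varepsilon$; this vanishes as $\varepsilon \to 0^+$ precisely because $(1-\Us(y))^2/\Us'(y) \to 0$ (by \eqref{eq:hyp-us-ratio1}) and $y(1-\Us(y)) \to 0$ (by \eqref{eq:hyp-us-ratio2}). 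It is a pleasant feature of the problem that the specific form $F = (1-\Us) + y\,\Us'$ combines exactly to produce the clean identity $(1-u)H'(u) = -g(u)$, thus placing the whole proof within reach of the classical toolkit of Cauchy integrals.
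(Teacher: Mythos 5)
Your proof is correct and follows essentially the same path as the paper's: you first derive the key representation $\Pinv(\mu) = \frac{1}{\mu\,\Us'(0)} + \int_0^1 \frac{g(u)}{u-\mu}\,\dd u$ by an integration by parts whose subtracted constant in the primitive kills the unbounded boundary contribution (the paper integrates by parts in $y$ and then changes variables, you change variables first and integrate by parts in $u$, which is the same computation), and then prove the quantitative Plemelj estimate by splitting the Cauchy integral at scale $|u-a|\sim\sqrt{b}$ and using local Lipschitz regularity of $g$ near $[a_0,a_1]$ together with global $L^1$ integrability. The only cosmetic difference is that the paper handles the near-$a$ piece via a smooth even cutoff $\tilde\theta$ so the principal value vanishes by symmetry, whereas you use a hard cutoff and an explicit logarithm for the $g\equiv 1$ remainder; both yield the $O(\sqrt{b})$ bound uniformly in $a\in[a_0,a_1]$.
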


\begin{remark}\label{rem:crossing}
 For $a \in (0,1)$,
\begin{equation}
\Im (G(a))=0 \iff \Us''(\Us^{-1}(a))=0.
\end{equation}
This observation will be crucial in counting and classifying the crossings of
the curve $G((0,1))$ with the real axis.
\end{remark}

\cref{thm:plemelj} allows to get a good description of $\Pinv(\mu)$ when $\mu$
is not too close to the endpoints $0$ and $1$ of the singular segment $[0,1]$.
Luckily enough, we will not need such a precise description of $\Pinv(\mu)$ near 
these points. Indeed, the result below states that: for $\mu \approx 1$, $\Pinv(\mu)$ is below
the real axis and, for $\mu \approx 0$, 
$\Pinv(\mu)$ either has a non-small imaginary part or a large positive real part.

\begin{lemma}[Behavior of $\Pinv$ near the end points of  {$[0,1]$}]
 \label{thm:phi-inv} 
 There exist positive constants $c, \rho > 0$ such that one has
 the following behaviors.
 \begin{itemize}

 \item Near 0: 
 for $a \in [0,\rho]$ and $b \in (0,\rho]$,
 one has either $|\Im \Pinv(a+\ii b)| \geqslant c$ 
 or $\Re \Pinv(a+\ii b) \geqslant c / \sqrt{b}$.
 Moreover, if $b^2 \geqslant a(1-a)$, then
 $\Im \Pinv(a +\ii b)\leqslant -c$.
 
 \item Near 1: 
 for $a \in [1-\rho,1]$ and $b \in (0,\rho]$, 
 one has $\Im \Pinv(a + \ii b) \leqslant - c b^\kappa$, where
 $\kappa$ corresponds to the exponent in assumption \eqref{eq:hyp-kappa-Us}.

 \end{itemize}
\end{lemma}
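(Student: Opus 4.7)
The plan is to start by deriving a closed form expression for $\Pinv$ that isolates both endpoint singularities. Beginning from $\Pinv(\mu)=(\mu-1)\int_0^\infty F(y)/(\mu-\Us(y))^2\,\dd y$, the substitution $u=\Us(y)$ recasts this as $(\mu-1)\int_0^1 h(u)/(\mu-u)^2\,\dd u$ with $h(u)=(1-u)/\Us'(\Us^{-1}(u))+\Us^{-1}(u)$, and a chain-rule computation yields $h'(u)(1-u)=-g(u)$. Writing $(\mu-1)/(\mu-u)^2=1/(\mu-u)+(u-1)/(\mu-u)^2$ and integrating by parts the second piece, in which the boundary term at $u=1$ vanishes by \eqref{eq:hyp-us-ratio1} and \eqref{eq:hyp-us-ratio2} and the boundary term at $u=0$ produces $-1/(\Us'(0)\mu)$, yields the master identity
\begin{equation*}
 \Pinv(\mu) \,=\, \frac{1}{\Us'(0)\,\mu} \,-\, \int_0^1 \frac{g(u)}{\mu-u}\,\dd u, \qquad \mu \notin [0,1].
\end{equation*}
For $\mu=a+\ii b$, $b>0$, this splits $\Im\Pinv$ and $\Re\Pinv$ into clean contributions involving the Poisson kernel $P_b(x)=b/(x^2+b^2)$ and its conjugate.

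For the behavior near $1$, the imaginary part reads $\Im\Pinv(a+\ii b)=-b/[\Us'(0)(a^2+b^2)]+\int_0^1 g(u)\,P_b(a-u)\,\dd u$, and the first term is $O(b)$ uniformly in $a\in[1-\rho,1]$. I would choose $\rho$ small enough that \eqref{eq:hyp-kappa-Us} yields $g(u)\leqslant -c_\kappa(1-u)^\kappa$ on $[1-r_0,1]$ with $r_0>2\rho$; the contribution of $u\in[0,1-r_0]$ is $O(b)$ because $|a-u|\geqslant r_0-\rho$ there. On $[1-r_0,1]$, the successive substitutions $v=1-u$ and $v=bs$ reduce the main term to
\begin{equation*}
 -c_\kappa\, b^\kappa \int_0^{r_0/b} \frac{s^\kappa}{(s-\beta)^2+1}\,\dd s,
 \qquad \beta := \frac{1-a}{b} \geqslant 0,
\end{equation*}
and an elementary split on $\beta\leqslant 1$ versus $\beta\geqslant 1$ bounds the remaining integral below by a positive constant (resp.\ by $\sim \beta^\kappa$), producing upper bounds $-cb^\kappa$ and $-c(1-a)^\kappa\leqslant -cb^\kappa$ respectively. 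Since $\kappa<1$, the $O(b)$ remainders are absorbed for $b\leqslant\rho$ small, yielding $\Im\Pinv\leqslant -cb^\kappa$.

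For the behavior near $0$, I further decompose $\int_0^1 g(u)/(\mu-u)\,\dd u = g(0)\log[\mu/(\mu-1)]+R(\mu)$ with $R$ uniformly bounded as $\mu\to 0$ because $g(u)-g(0)=O(u)$; recall $g(0)=\Us''(0)/\Us'(0)^3\neq 0$ by hypothesis. Assertion (ii) then follows directly: $b^2\geqslant a(1-a)$ with $a\leqslant\rho$ small enough forces $a\lesssim b^2$, whence $a^2+b^2\leqslant 2b^2$ and $-b/[\Us'(0)(a^2+b^2)]\leqslant -1/(2\Us'(0)b)$, which dominates all $O(1)$ contributions once $\rho$ is small. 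For the dichotomy (i), the key observation is that $\Im\log[\mu/(\mu-1)]=\arg\mu-\arg(\mu-1)$ lies in $[-\pi+O(\rho),-\pi/2]$ on the prescribed range, so $-g(0)\Im\log[\mu/(\mu-1)]$ has definite sign and magnitude $\gtrsim |g(0)|$: when $g(0)\leqslant 0$ this contribution and $-b/[\Us'(0)(a^2+b^2)]$ are both $\leqslant 0$, giving $|\Im\Pinv|\geqslant c$ for any $c<|g(0)|\pi/2$; when $g(0)>0$ the two terms can cancel, but only in the regime $a^2+b^2\sim b$, which forces $a\sim \sqrt{b}$. In that regime, $a/[\Us'(0)(a^2+b^2)]\gtrsim 1/\sqrt{b}$, and combined with the positive logarithmic contribution $-g(0)\Re\log[\mu/(\mu-1)]\sim g(0)\log(1/|\mu|)$ this gives $\Re\Pinv\geqslant c/\sqrt{b}$.

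The main obstacle is the dichotomy (i): one must verify that the cancellation regime for the imaginary part is exactly the regime producing the required lower bound on the real part, and do so uniformly over $(a,b)\in[0,\rho]\times(0,\rho]$ and for both signs of $g(0)$. The master formula reduces this to an elementary but delicate case analysis in which the constants $c$ and $\rho$ must be tuned so that the bounded remainders $R$ and the $O(\rho)$ corrections in the arguments never spoil the principal estimates.
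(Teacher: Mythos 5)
Your argument is correct and is essentially the paper's own proof: the master identity $\Pinv(\mu)=1/(\mu\Us'(0))-\int_0^1 g(u)/(\mu-u)\,\dd u$ is exactly the paper's \eqref{eq:phi-inv-mu-sing} (obtained there during the proof of \cref{thm:plemelj}), the estimate near $1$ uses the same substitution $v=bs$ against the bound $g(u)\leqslant-c_\kappa(1-u)^\kappa$ from \eqref{eq:hyp-kappa-Us}, and the near-$0$ dichotomy is resolved by the same competition between the pole $1/(\mu\Us'(0))$ and the Cauchy integral, with the cancellation regime forcing $b/(a^2+b^2)\asymp 1$ and hence $a\sim\sqrt b$ and $\Re\Pinv\gtrsim 1/\sqrt b$. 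The only cosmetic differences are that you freeze $g$ at $0$ and work with $\log[\mu/(\mu-1)]$ where the paper freezes $g$ at $a$ and computes $\pi-\arctan(b/(1-a))-\arctan(b/a)$ directly; the paper's choice yields the marginally cleaner uniform $O(\sqrt b)$ remainder bound.
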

We also have a similar statement concerning the function $G$ itself:
\begin{lemma}[Behavior of $G$ near 0] \label{thm:properties-G}
 There exist $c, \rho > 0$ such that
 $\Re G(a) \geqslant c / a$ for $a \in (0,\rho]$.
\end{lemma}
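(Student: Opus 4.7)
The plan is to isolate the explicit singular $1/a$ contribution in $G$ coming from the $1/(a U_s'(0))$ term, and then show that the principal-value integral $\PV \int_0^1 g(u)/(u-a)\,\dd u$ is at most logarithmically singular at $a=0$, hence negligible with respect to $1/a$. Concretely, from the definition of $G$ one has
\begin{equation*}
 \Re G(a) = \frac{1}{a\, U_s'(0)} + \PV \int_0^1 \frac{g(u)}{u-a}\,\dd u,
\end{equation*}
and the first term is positive and of order $1/a$ as $a \to 0^+$, since $U_s'(0) > 0$ by assumption. The whole proof reduces to showing that the PV integral is $O(\log(1/a))$, which will be strictly dominated by $1/(aU_s'(0))$ for $a$ small enough.

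To control the PV integral I would use the standard trick of subtracting $g(a)$ from the numerator. Using the elementary computation $\PV \int_0^1 \dd u/(u-a) = \log\bigl((1-a)/a\bigr)$, one gets
\begin{equation*}
 \PV \int_0^1 \frac{g(u)}{u-a}\,\dd u
 = \int_0^1 \frac{g(u)-g(a)}{u-a}\,\dd u + g(a)\,\log\frac{1-a}{a}.
\end{equation*}
Because $U_s'(0) > 0$, the inverse $U_s^{-1}$ is smooth in a neighborhood of $0$, so $g$ is $C^\infty$ on some interval $[0,a_*]$, in particular Lipschitz there; moreover $g \in L^1(0,1)$ by the change of variables $u = U_s(y)$ combined with assumption~\eqref{eq:hyp-us-ratio3}. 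Splitting the regular integral into $\{|u-a| \leqslant a_*/2\}$ and its complement, the first piece is bounded by the local Lipschitz constant of $g$ times $a_*$, and on the second piece $|u-a|$ is bounded below, so the integrand is dominated by $C(|g(u)|+|g(a)|)$, which is integrable. This yields a bound $O(1)$, uniform in $a \in (0,\rho]$ for $\rho$ small.

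Combining the two estimates,
\begin{equation*}
 \Re G(a) \;\geqslant\; \frac{1}{a\, U_s'(0)} - C\bigl(1 + |\log a|\bigr),
\end{equation*}
and taking $\rho$ small enough so that $1/(aU_s'(0))$ dominates twice the right-hand remainder yields $\Re G(a) \geqslant c/a$ with $c = 1/(2U_s'(0))$ on $(0,\rho]$. The main technical point is the bound on $\PV \int_0^1 (g(u)-g(a))/(u-a)\,\dd u$, and specifically its uniformity in $a$: this hinges on $g$ being Lipschitz near $0$ (from $U_s'(0) > 0$ and smoothness of $U_s$) and on $g \in L^1(0,1)$ (from \eqref{eq:hyp-us-ratio3}). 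No finer information on the behavior of $g$ near $u=1$ is needed.
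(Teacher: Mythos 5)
Your proof is correct and follows essentially the same route as the paper's: you perform the same subtraction of $g(a)$ to split the PV integral into an explicitly computed $g(a)\log((1-a)/a)$ piece and a regular integral, bound the latter uniformly using local Lipschitz regularity of $g$ near $0$ (from $U_s'(0)>0$ and smoothness of $U_s$) together with $g\in L^1(0,1)$ (which the change of variables $u=U_s(y)$ shows is exactly assumption~\eqref{eq:hyp-us-ratio3}), and conclude that the $1/(a\,U_s'(0))$ term dominates. The only differences from the paper are cosmetic (a parameter $a_*$ rather than the fixed cutoff $1/2$).
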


\subsection{Statement of instability criteria}
\label{sec:criteria}

We have now gathered enough material to state our criteria, 
 that turn out to be sufficient conditions for the existence of
instabilities at the inviscid level. For a monotone shear flow $\Us$, 
we define $\hs := \Us^{-1}$. We consider the curve $G((0,1))$, and we denote by $y_1, \dotsc, y_k$ the zeros of $\Us''$. As explained in Remark \ref{rem:crossing}, each zero of $\Us''$ corresponds to a crossing between the curve and the real axis: more precisely, $\Im G(\Us(y_i))=0$ for $1 \leqslant i \leqslant k$. The corresponding crossing abscissa are defined by
\begin{equation} \label{eq:chi}
 \chi(y) := \Re(G(\Us(y)))=
 \frac{\hs'(0)}{\Us(y)} - \PV \int_0^1 \frac{(1-u)^2 \hs''(u)}{u-\Us(y)} \dd u.
\end{equation}
We consider different sub-groups of the $y_i$'s, depending on the nature of the crossing:
\begin{equation}
 \begin{aligned}
  I(\gamma) & :=\{i\in \{1,\dotsc, k\},\ \chi(y_i)=\gamma\}, \\
  I_+(\gamma) & :=\{i\in \{1,\dotsc, k\},\ \chi(y_i)<\gamma \text{ and }\Us''(y) >0\text{ (resp. $<0$) for }y \text{ in a }\\
  & \qquad \text{neighborhood on the right (resp. on the left) of } y_i\},\\
  I_-(\gamma) & :=\{i\in \{1,\dotsc, k\},\ \chi(y_i)<\gamma \text{ and }\Us''(y) <0\text{ (resp. $>0$) for }y \text{ in a } \\ 
  & \qquad \text{neighborhood on the right (resp. on the left) of } y_i\}.
 \end{aligned}
\end{equation}
We define
\begin{equation}
 \xi_+(\gamma)  := \mathrm{Card~} I_+(\gamma), \quad
 \xi_-(\gamma)  := \mathrm{Card~} I_-(\gamma), \quad \xi(\gamma)=\mathrm{Card~}I(\gamma).
\end{equation}
Now we can state the main sufficient conditions for the existence of 
instabilities at the inviscid level that will persist at the viscous level.

\begin{criterion} \label{crit:pdt-sufficient}
 Assume that $\xi(\ds) = 0$ and $\xi_-(\ds) > \xi_+(\ds)$.
\end{criterion}

\begin{criterion} \label{crit:ibl-global-sufficient}
 Assume that $\xi(0) = 0$ and $\xi_-(0) > \xi_+(0)$.
\end{criterion}

\begin{criterion} \label{crit:ibl-local-sufficient}
 Assume that there exists $\gamma > 0$ such that $\xi(\gamma) = 0$ and $\xi_-(\gamma) > \xi_+(\gamma)$.
\end{criterion}

\subsection{Counting roots with positive imaginary part} \label{subsec:counting}

In order to determine whether there exists a $\mu \in \C$ with 
$\Im \mu > 0$ such that $\Pinv(\mu)$ achieves a given positive real value,
we use the argument principle (see e.g. \cite[Chapter~7]{needham1998visual}). Moreover, we know
that such possible roots must lie within the region $a \in (0,1)$ and 
$b^2 < a(1-a)$. Last, the function $\Pinv$ is holomorphic except on the
line segment $[0,1]$. Hence, to count the possible roots with positive imaginary part, it is natural to introduce, for $0 < \eta \ll 1$, the contour $\mathcal{C}_\eta$ 
which is constructed as in \cref{fig:contour}. 
We then study the images of these curves by the map $\Pinv$
(see \cref{fig:ice} for some examples on given shear flows).
We now prove the main result of this section, which can be applied to any of the criteria given
in \cref{sec:criteria}.

\begin{figure}[!ht]
 \centering
 \includegraphics{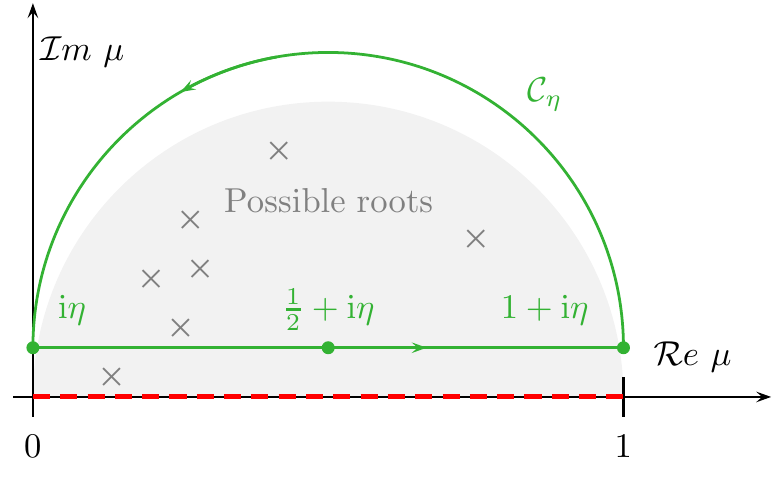}
 \caption{Closed curve used to trap possible roots with positive imaginary part}
 \label{fig:contour}
\end{figure}

\begin{figure}[!ht]
 \centering
 \includegraphics[width=13cm]{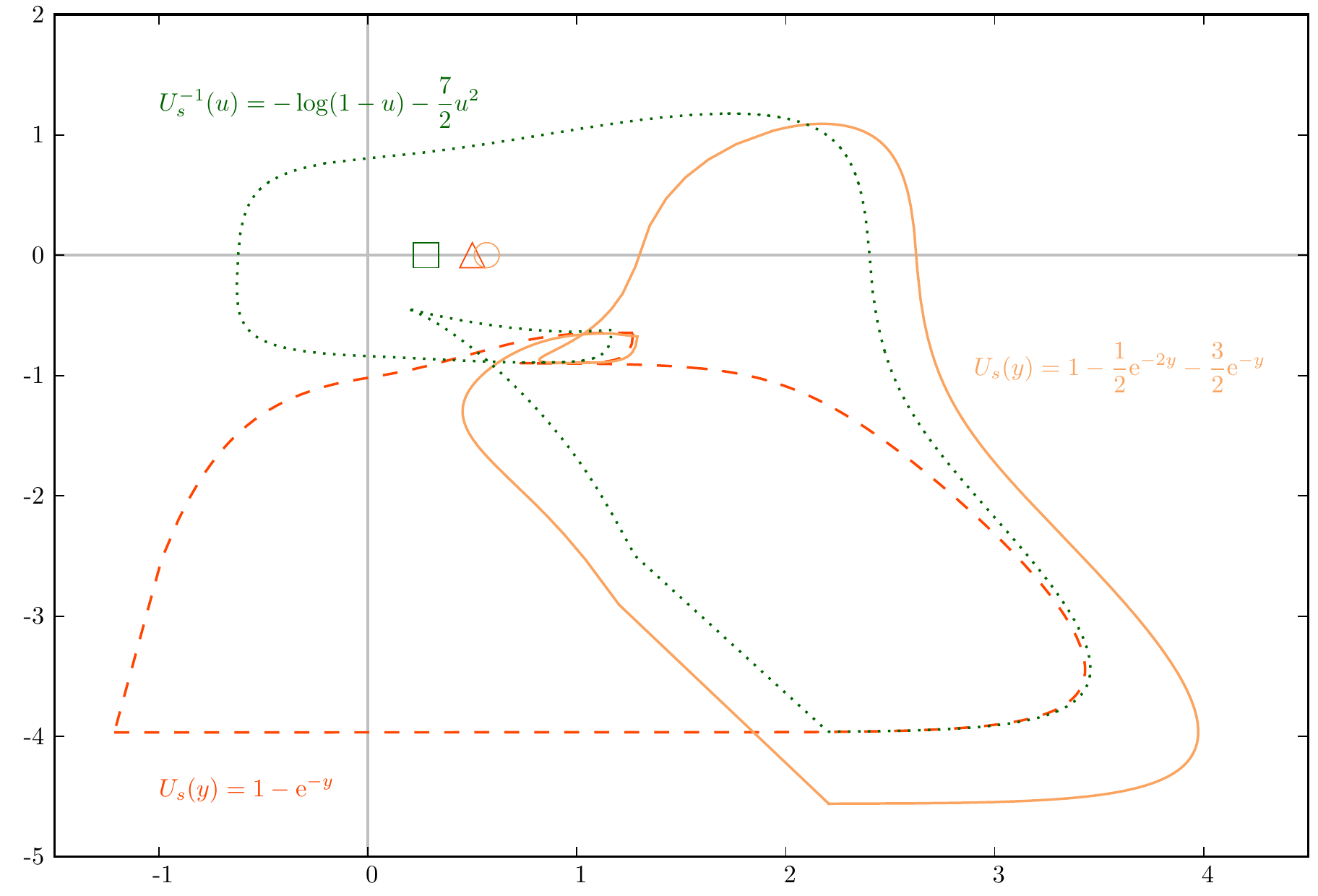}
 \caption{Illustration of curves $\Pinv(\mathcal{C}_\eta)$ for $\eta = 10^{-3}$ and three different shear flows:
 a concave shear flow $\Us(y) = 1 - \ee^{-y}$ (dashed line), that does not satisfy any of our criteria;  a profile satisfying \cref{crit:ibl-local-sufficient}, defined by
 $\Us(y) = 1 + \frac{1}{2} \ee^{-2y} - \frac{3}{2} \ee^{-y}$ (solid line); and a profile given by its inverse
 $\Us^{-1}(u) = - \log(1-u) - \frac{7}{2}u^2$ (dotted line) which satisfies all three criteria. The positions of the displacement thicknesses
 are indicated respectively by a triangle, a circle and a square. The curves are plotted in the scaled complex 
 plane by formula $\Re /(1+|\Re|^{0.8})$ and $\Im / (1+|\Im|^{0.8})$.}
 \label{fig:ice}
\end{figure}

\begin{proposition} \label{thm:winding}
 Let $\gamma \geqslant 0$.
 Assume that $\xi(\gamma) = 0$. Then for $\eta > 0$ small enough the winding number of $\Pinv(C_{\eta})$ equals $\xi_-(\gamma) - \xi_+(\gamma)$. In particular, if it is positive there exists $\mu \in \C$ with $\Im \mu > \eta$ such that $\Pinv(\mu) = \gamma$.
\end{proposition}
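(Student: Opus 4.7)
The plan is to apply the argument principle to convert the winding-number calculation into a zero-count, and then to evaluate the winding geometrically by tracking how the image of $\mathcal{C}_\eta$ loops around the real point $\gamma$. Since $\Pinv$ is holomorphic on $\C \setminus [0,1]$ by \cref{thm:possible-mu}, and $\mathcal{C}_\eta$ avoids the cut (staying at distance at least $\eta$), the winding number of $\Pinv(\mathcal{C}_\eta)$ around $\gamma$ equals the number of preimages of $\gamma$ under $\Pinv$ inside $\mathcal{C}_\eta$, counted with multiplicity. Once the winding number is shown to be $\xi_-(\gamma) - \xi_+(\gamma)$, positivity of this quantity immediately yields the existence of some $\mu \in \C$ with $\Im \mu > \eta$ and $\Pinv(\mu) = \gamma$.

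To evaluate the winding, I would split $\mathcal{C}_\eta$ into the upper arc lying on the circle $|\mu - 1/2| = 1/2$ (where $b^2 \geqslant a(1-a)$) and the lower horizontal segment at $\Im \mu = \eta$. On the arc, \cref{thm:possible-mu} forces $\Im \Pinv < 0$, so its image lies in the open lower half-plane and contributes nothing to the winding around the real point $\gamma$. On the horizontal segment, Plemelj's formula (\cref{thm:plemelj}) gives $\Pinv(a + \ii \eta) = G(a) + O(\sqrt{\eta})$ uniformly on $[\rho, 1-\rho]$ for any fixed $\rho > 0$, so the image is a small perturbation of the curve $G([\rho,1-\rho])$.

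Next, I would count signed crossings of the image with the leftward real ray emanating from $\gamma$. By \cref{rem:crossing}, $G$ crosses the real axis exactly at $a = \Us(y_i)$, at abscissa $\chi(y_i)$, and the sign of $\Im G(a) = \pi g(a)$ tracks that of $\Us''$. Orienting $\mathcal{C}_\eta$ counterclockwise, so that the horizontal segment is traversed with increasing $a$, a zero $y_i \in I_+(\gamma)$ produces an upward crossing at abscissa $\chi(y_i) < \gamma$ (contributing $-1$ to the winding around $\gamma$ via the leftward ray), while $y_i \in I_-(\gamma)$ produces a downward crossing (contributing $+1$); crossings at $\chi(y_i) > \gamma$ lie on the wrong side of $\gamma$ and are irrelevant. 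The hypothesis $\xi(\gamma) = 0$ ensures that no crossing lies at the endpoint of the ray, and together with the $O(\sqrt{\eta})$ Plemelj error this guarantees that $\Pinv$ inherits the same signed crossing pattern as $G$ for $\eta$ small enough, yielding the announced total $\xi_-(\gamma) - \xi_+(\gamma)$.

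The main obstacle will be controlling the two boundary regimes $a \to 0$ and $a \to 1$, where Plemelj does not apply directly, and splicing them with the middle regime without creating or destroying a winding around $\gamma$. Near $a = 1$, \cref{thm:phi-inv} gives $\Im \Pinv \leqslant -c \eta^\kappa$, so the image stays in the lower half-plane and no crossing of the leftward ray occurs. Near $a = 0$, \cref{thm:phi-inv} ensures that $\Pinv$ is either in the lower half-plane or has real part at least $c/\sqrt{\eta}$, while the bound $\Re G(a) \geqslant c/a$ of \cref{thm:properties-G} keeps the real part above $\gamma$; hence any real-axis crossing in this regime occurs to the right of $\gamma$ and does not contribute. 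Patching these three regimes (strict lower half-plane on the arc, large real part near $a=0$, and Plemelj approximation in the bulk) into a rigorous crossing count, uniformly for all sufficiently small $\eta$, is the most delicate part of the argument.
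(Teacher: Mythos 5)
Your proposal is correct and follows essentially the same route as the paper: same split of $\mathcal{C}_\eta$ into arc and horizontal segment, same use of \cref{thm:possible-mu}, \cref{thm:plemelj}, \cref{thm:phi-inv}, and \cref{thm:properties-G}, and the same final reduction to a signed crossing count on $G$. The paper packages the last step slightly differently — it truncates $G([0,1])$ to $[\rho, 1-\rho]$, closes it off to obtain a reference curve $D_0$, builds analogous $\Pinv$-curves $D_\eta$, and argues by Hausdorff convergence $D_\eta \to D_0$ — but that is just a tidier way of bookkeeping the same crossing count you perform directly with the leftward ray; the "delicate patching" you flag at the end is handled by the observation that the extra closing segments never meet the ray $(-\infty,\gamma]$.
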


\begin{proof}
 \textbf{Heuristic.}
 The closed curve $\mathcal{C}_\eta$ consists of two parts: the line segment $\ii \eta + [0,1]$
 and a half-circle. For any $\mu$ in this half-circle, $\Im \Pinv(\mu) < 0$ thanks to
 \cref{thm:possible-mu}. So this part of the curve does not have any chance to wind around
 a point of the real axis. 
 The key idea of the proof is that, as $\eta \to 0$, we can use \cref{thm:plemelj} to
 estimate the behavior of the curve $\Pinv(\ii \eta + [0,1])$. However, as the ``limit
 curve'' $G((0,1))$ is unbounded and the convergence is not uniform, we must be careful.
 
 \bigskip \noindent \textbf{Controlling the behavior near the endpoints.}
 First, there exists $\rho_1 > 0$ such that $g(a)$ does not change sign
 on $[0,\rho_1]$ or on $[1-\rho_1,1)$.
 Then, from \cref{thm:phi-inv}, there exists $c, \rho > 0$ (with $\rho < \rho_1$) 
 such that,
 for $a \in [1-\rho,1)$ and $b\in(0,\rho]$, $\Im \Pinv(a+\ii b) < 0$
 and, for $a \in [0,\rho]$ and $b\in (0,\rho]$, either
 $|\Im \Pinv(a+\ii b)| \geqslant c$ or $\Re \Pinv(a+\ii b) \geqslant c/\sqrt{b}$.
 Hence, if $b \leqslant c^2 / (|\gamma|+1)^2$, then either
 $|\Im \Pinv(a+\ii b)| \geqslant c$ or 
 $\Re \Pinv(a+\ii b) \geqslant \gamma + 1$. In particular,
 the curve will not intersect the half line $(-\infty,\gamma]$ and thus
 cannot wind around $\gamma$.

 \bigskip \noindent \textbf{Construction of a reference curve.}
 We start by building a reference oriented curve $D_0$ which is composed
 of: $G([\rho,1-\rho])$ followed by a line segment between $G(1-\rho)$ and 
 the point $|\gamma|+1$ and a line segment between $|\gamma|+1$ and $G(\rho)$.
 Thanks to our choice of $\rho$, $G(1-\rho)$ is below the real axis, and $\Re(G(\rho))>|\gamma|$ (see Lemma \ref{thm:properties-G}). So the winding number
 of $D_0$ around $\gamma$ is $\xi_-(\gamma) - \xi_+(\gamma)$  (see \cref{fig:winding} for two examples).
 
\begin{figure}[!ht]
 \centering
 \includegraphics[width=\textwidth]{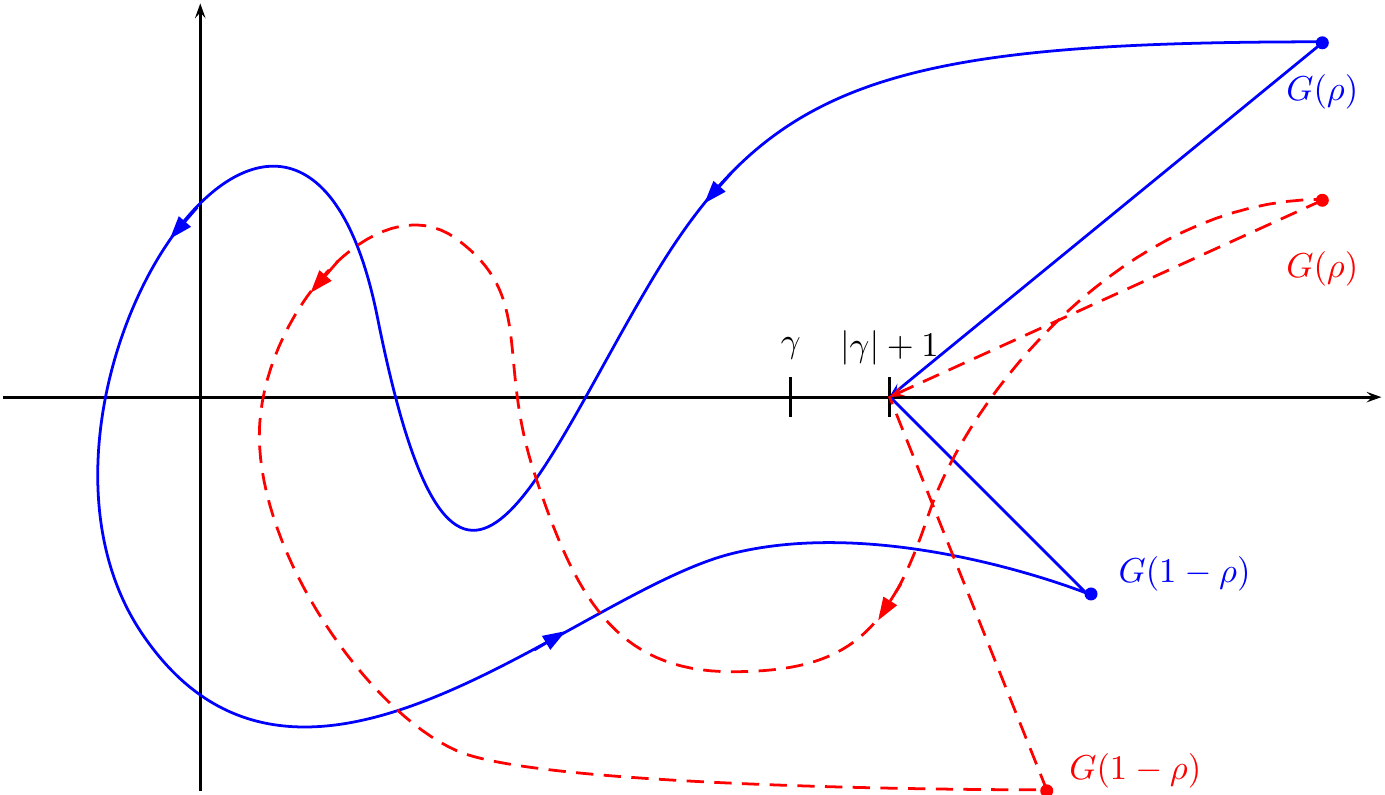}
 \caption{Two examples of reference curves $D_0$. In solid blue, $\xi_-(\gamma)=2$ and $\xi_+(\gamma)=1$, so that the winding number of the curve around $\gamma$ is equal to one. In dashed red, $\xi_-(\gamma)=\xi_+(\gamma)=1$ so that the winding number of the curve around $\gamma$ is zero.}
 \label{fig:winding}
\end{figure}
 
 \bigskip \noindent \textbf{Construction of approximate curves.}
 For $\eta \in (0,\rho)$, we define a closed oriented curve $D_\eta$
 which is composed of $\Pinv([\rho+\ii \eta, (1-\rho)+\ii\eta])$
 followed by a line segment between $\Pinv(1-\rho+\ii\eta)$
 and $|\gamma|+1$ and a line segment between $|\gamma|+1$ and 
 $\Pinv(\gamma+\ii \eta)$. The winding number of $D_\eta$
 around $\gamma$ is the same as the winding number of $\Pinv(\mathcal{C}_\eta)$
 around $\gamma$. Indeed, by Lemma \ref{thm:phi-inv}, the differences between these curves
 do not intersect the half line $(-\infty,\gamma]$
 so they cannot change the winding number. 
 
 \bigskip \noindent \textbf{Conclusion.}
 Thanks to \cref{thm:plemelj},
 the curves $D_\eta$ converge uniformly (say in the sense of
 the Hausdorff distance) to $D_0$. So, for small enough
 $\eta > 0$, the winding number of $\Pinv(\mathcal{C}_\eta)$ around $\gamma$ 
 is equal to the one of $D_0$ and therefore given by the formula of Proposition \ref{thm:winding}.
\end{proof}

\subsection{Instabilities in the physical regime}

We prove the following analogue of \cref{thm:winding} in the physical 
regime case. The main difference is that $\gamma$ is no longer a fixed quantity 
but tends towards $+\infty$. This requires some additional precautions.

For any closed oriented curve $\mathcal{D}$ and $\gamma \in \R$, we will denote
by $\mathrm{W}_\gamma[\mathcal{D}] \in \Z$ the winding number of $\mathcal{D}$
around $\gamma$ and $\mathrm{d}_\gamma[\mathcal{D}] \geqslant 0$ the distance between 
$\mathcal{D}$ and $\gamma$.

\begin{lemma}
 \label{thm:winding-physical}
 Assume that $\Us''(0) > 0$. There exists $d, \gamma_- > 0$ such that
 for $\gamma \geqslant \gamma_-$, there exists $\eta(\gamma) \geqslant d \gamma^{-2}$
 such that
 \begin{equation}
  \mathrm{W}_\gamma [ \Pinv(\mathcal{C}_{\eta(\gamma)}) ] = 1
  \quad \text{and} \quad
  \mathrm{d}_\gamma [ \Pinv(\mathcal{C}_{\eta(\gamma)}) ] \geqslant d \gamma^{-2\kappa},
 \end{equation}
 where $\kappa \in (0,\frac{1}{2})$ was introduced in assumption \eqref{eq:hyp-kappa-Us}.
\end{lemma}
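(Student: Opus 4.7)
The plan is to follow the scheme of the proof of \cref{thm:winding}, tracking the $\gamma$-dependence of every estimate so that the bound on $\eta$ becomes polynomial in $\gamma$. The natural scaling is $\eta(\gamma) := d \gamma^{-2}$ with $d$ a small constant depending on the constant $c$ of \cref{thm:phi-inv}: this is exactly what turns the threshold $c/\sqrt{\eta}$ appearing in that lemma's dichotomy into $2\gamma$, so that on the near-zero portion $a \in [0, \rho]$ of the bottom segment of $\mathcal{C}_\eta$, $\Pinv(a + \ii \eta)$ lies at distance at least $\min(c, \gamma)$ from $\gamma$.

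For the distance bound $\mathrm{d}_\gamma[\Pinv(\mathcal{C}_\eta)] \geq d \gamma^{-2\kappa}$, I would decompose $\mathcal{C}_\eta$ into four arcs and estimate each separately. On the middle arc $a \in [\rho, 1-\rho]$ of the bottom segment, \cref{thm:plemelj} gives $\Pinv \approx G$ with error $O(\sqrt{\eta}) = O(\gamma^{-1})$; since $|G|$ is bounded by some $M_\rho$ on this compact set, the distance to $\gamma$ is at least $\gamma/2$ for $\gamma$ large. The binding constraint comes from $a \in [1-\rho, 1]$, where \cref{thm:phi-inv} yields $\Im \Pinv \leq -c \eta^\kappa = -c d^\kappa \gamma^{-2\kappa}$, which is precisely the source of the $\gamma^{-2\kappa}$ rate in the statement. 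For the upper semicircle, a quantitative refinement of \cref{thm:possible-mu} (essentially interpolating between its two endpoints, where \cref{thm:phi-inv} already provides $\Im \Pinv \leq -c$ at $\ii \eta$ and $\Im \Pinv \leq -c \eta^\kappa$ at $1 + \ii \eta$) gives a similar $\eta^\kappa$ lower bound on $|\Im \Pinv|$. Taking the minimum of the four contributions yields the required bound.

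For the winding number, the essential input is the sign structure of $\Us''$: under the hypothesis $\Us''(0) > 0$ combined with assumption \eqref{eq:hyp-kappa-Us} (which forces $\Us''(y) < 0$ for $y \geq y_\kappa$), the sign-changing zeros of $\Us''$ in $[0, +\infty)$ must be in odd number and alternate in the pattern $+ \to -, \, - \to +, \, \dotsc, \, + \to -$. A direct count then gives $|I_-| - |I_+| = 1$ once every abscissa $\chi(y_i)$ is captured, so choosing $\gamma_- := \max_i \chi(y_i) + 1$ secures $\xi(\gamma) = 0$ and $\xi_-(\gamma) - \xi_+(\gamma) = 1$ for all $\gamma \geq \gamma_-$. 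Applying \cref{thm:winding} with our explicit $\eta = d \gamma^{-2}$ then gives winding number exactly~$1$.

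The main obstacle is ensuring that the reference-curve argument of \cref{thm:winding} remains quantitatively valid as $\gamma \to \infty$: its construction requires $\Re G(\rho) > \gamma + 1$, which by \cref{thm:properties-G} forces the inner cut-off $\rho = \rho(\gamma)$ to shrink like $\gamma^{-1}$, so that the Plemelj constant $C(\rho)$ deteriorates accordingly. I expect to handle this by a two-scale decomposition of $[0, \rho(\gamma)]$: a very small sub-arc on which $\Re G \gg \gamma$ (where any crude bound from \cref{thm:phi-inv} already guarantees that the approximate and exact curves share the same winding around $\gamma$), and a moderate sub-arc where \cref{thm:plemelj} is invoked with a possibly $\gamma$-dependent but explicitly controlled constant---in the same spirit as the endpoint analysis already carried out in the proof of \cref{thm:winding}.
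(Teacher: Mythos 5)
Your proposal follows essentially the same route as the paper's proof: the crucial device is exactly the two-scale decomposition near the origin with a $\gamma^{-1}$-scaled inner cutoff (the paper's $a_0(\gamma) = c_G/(\gamma+1)$), the quantitative Plemelj estimate of \cref{rk:plemelj-strong} on the outer sub-arc, the crude dichotomy of \cref{thm:phi-inv} on the inner one, and the identification of the $\gamma^{-2\kappa}$ distance rate as arising from the endpoint $a\approx 1$. The only cosmetic difference is that you deduce $\mathrm{W}_\gamma=1$ from a sign-change count of $\Us''$, whereas the paper reads it off directly from the construction of the reference curve $D_0^\gamma$.
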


\begin{proof}

We choose $\gamma_->0$ larger than $\max_{1\leq i \leq k} \chi(y_i)$, where we recall that the $y_i$'s are the zeros of $U_s''$.
Then we choose $c_G, \rho_G > 0$ such that, for $a \in [0,\rho_G]$,
$\Im G(a) = \pi g(a) \geqslant c_G$ and $\Re G(a) \geqslant c_G/a$
(see \cref{thm:properties-G}).
Let $c,\rho > 0$ be such that \cref{thm:phi-inv} applies.
We also fix $1-\rho < a_1 < 1$ such that $g(u) < 0$ for $u > a_1$.

Let $\gamma \geqslant \gamma_-$. 
We define $a_0(\gamma) := c_G/(\gamma+1)$. Note that up to choosing a larger $\gamma_-$, we can always assume that $a_0(\gamma)\leqslant \rho_G$.
We build a closed curve $D_0^\gamma$ by considering the curve arc $G([a_0(\gamma),a_1])$
followed by a line segment between $G(a_1)$ and $\Re G(a_1)-\ii$, then a line
segment between $\Re G(a_1)-\ii$ and $\gamma+1-\ii$,
and eventually a line segment between $\gamma+1-\ii$ and $G(a_0(\gamma))$.
Thanks to our construction, $\mathrm{W}_\gamma[D_0^\gamma] = 1$
and there exists $d_0 > 0$ independent of $\gamma$ such that
$\mathrm{d}_\gamma[D_0^\gamma] \geqslant d_0$. We define
\begin{equation} \label{eq:eta-gamma}
 \eta(\gamma) := (\gamma+1)^{-2} \min (c^2, c_G^2, (d_0 c_G / 4C)^2),
\end{equation}
where $C$ is the constant appearing in the right-hand side of the stronger version of the Plemelj formula \eqref{eq:plemelj-strong} in the Appendix.

We use the same closing procedure and build a closed curve $D_{\eta(\gamma)}^\gamma$ which consists 
of the arc $\Pinv([a_0(\gamma),a_1]+\ii\eta(\gamma))$ followed
by a line segment between $\Pinv(a_1+\ii\eta(\gamma))$ and $\Re G(a_1) - \ii$,
then a line segment between $\Re G(a_1)-\ii$ and $\gamma+1-\ii$
and finally a line segment between $\gamma+1-\ii$ and $\Pinv(a_0(\gamma)+\ii\eta(\gamma))$.

First, we check that $\mathrm{W}_\gamma[\Pinv(\mathcal{C}_{\eta(\gamma)})] = \mathrm{W}_\gamma[D_{\eta(\gamma)}^\gamma] = \mathrm{W}_\gamma[D_0^\gamma] = 1$.
We use the stronger version of the Plemelj formula of \cref{rk:plemelj-strong} in the Appendix.
From \eqref{eq:eta-gamma}, $\sqrt{\eta(\gamma)}/a_0(\gamma) \leqslant 1$.
Hence, using \eqref{eq:plemelj-strong}, the Hausdorff distance
between $\Pinv([a_0(\gamma),a_1]+\ii\eta(\gamma))$ and $G([a_0(\gamma),a_1])$
is lower than $2C\sqrt{\eta(\gamma)}/a_0(\gamma) \leqslant d_0/2$
thanks to \eqref{eq:eta-gamma}. By construction of the closing
procedure, the same is true between 
$D_{\eta(\gamma)}^\gamma$ and $D_0^\gamma$. This ensures
that $\mathrm{W}_\gamma[D_{\eta(\gamma)}^\gamma] = \mathrm{W}_\gamma[D_0^\gamma]$.
The first equality 
$\mathrm{W}_\gamma[\Pinv(\mathcal{C}_{\eta(\gamma)})] = \mathrm{W}_\gamma[D_{\eta(\gamma)}^\gamma]$
is obtained as in the previous cases because the closing procedure only
modifies the curve in a region where it cannot wind around $\gamma$.

Second, we check that there exists $d > 0$ such that 
$\mathrm{d}_\gamma[\Pinv(\mathcal{C}_{\eta(\gamma)})] \geqslant d\rho^{-2\kappa}$.
We decompose the initial curve $\mathcal{C}_{\eta(\gamma)}$ and split it in 5 different parts
depending on the value of $\mu = a + \ii b$ running along the oriented curve:
\begin{itemize}
 \item Points where $a \in [a_0(\gamma), a_1]$ and $b = \eta(\gamma)$. Here, as explained
 above, the curve is close to $G([a_0(\gamma),a_1])$ and thus at distance at least $d_0/2$ of
 $\gamma$.
 \item Points where $a \in [1-\rho,1]$ and $b \in (0,\rho]$. Here, $b \geqslant \eta(\gamma)$ and, from \cref{thm:phi-inv}, $\Im \Pinv(\mu) \leqslant - c b^\kappa \leqslant - c \eta(\gamma)^\kappa$.
 \item Points where $b \geqslant \rho$. Here, there exists $d_1 > 0$ (depending on $\rho$) such that
 $\Im \Pinv(\mu) \leqslant -d_1$ uniformly. Indeed, thanks to \cref{thm:possible-mu}, 
 the function $\Im \Pinv(\mu)$ is negative when $b^2 \geqslant a(1-a)$ and continuous for $b > 0$. 
 Hence, it has a negative maximum on the compact set $\{ a+\ii b \in \C, \, b \in [\rho,1], \, a \in [0,1], \, b^2 \geqslant a(1-a) \}$.
 \item Points where $a \in [0,\rho]$, $b \in (0,\rho)$ and $b^2 \geqslant a(1-a)$. Here,
 from \cref{thm:phi-inv}, $\Im\Pinv(\mu) \leqslant -c$.
 \item Points where $a \in [0,a_0(\gamma)]$ and $b = \eta(\gamma)$. Here, from \cref{thm:phi-inv},
 there either holds $|\Im\Pinv(\mu)| \geqslant c$ or $\Re \Pinv(\mu) \geqslant c/\sqrt{\eta(\gamma)} \geqslant \gamma+1$
 thanks to \eqref{eq:eta-gamma}.
\end{itemize}
Eventually, for $\gamma$ large enough, we get the existence of a constant $d > 0$ 
such that $\mathrm{d}_\gamma [ \Pinv(\mathcal{C}_{\eta(\gamma)}) ] \geqslant d \gamma^{-2\kappa}$
and $\eta(\gamma) \geqslant d\gamma^{-2}$.
\end{proof}

We refer to \cref{sec:proofs-123} for the conclusion of the proof
of \cref{thm:ibl-sufficient} in the physical regime case, where
we prove that these instabilities persist at the viscous level.
We can already note here that the instabilities will grow at least like
$\eta(\gamma) k t \sim \nu k^3 t$.

\section{Persistence of inviscid instabilities}
\label{sec:persistence}

As announced in the introduction, the instabilities identified in 
\cref{sec:inviscid} persist at the viscous level for high enough 
tangential frequencies. Two approaches are possible.

The first approach, which is quite natural in problems involving a small
parameter (here, the small parameter is $|k|^{-1}$), is to
construct approximate solutions of \eqref{eq:sol-vv-problem}, 
of arbitrarily good precision. The main order term in this approximate
solution will be the eigenmode constructed in \eqref{eq:inviscid-approximation-v}.
Since the full ODE \eqref{eq:sol-vv-problem} is of a higher degree than 
\eqref{eq:sol-vvinv-problem}, one must construct boundary 
layer correctors. Then, one must add internal correctors. 
Iterating the procedure allows to construct arbitrarily precise approximations,
which all blow up exponentially fast with respect to time since the
main order does. Then, one must prove that the exact solution of the evolution
equation with a suitable initial data remains close to this
approximate solution (and thus also exhibits exponential growth).
This allows to prove ill-posedness results for the evolution equation, say in Sobolev spaces
 (see e.g. \cite{MR2601044}). 

The approach in the present paper is different. It relies on more abstract continuity arguments, showing that there exists exact eigenmodes of the full viscous equation.  Our argument involves a new energy estimate for the boundary layer problem  \eqref{eq:sol-vv-problem}, uniform for large tangential frequencies.

At the end of this section, in \cref{sec:method2}, 
we give a quick description of the iterative construction involved 
in the first approach in the PDT case, as an illustration.

\subsection{Energy estimates for the homogeneous problem}
\label{sec:homogeneous-resolvent}

We start by proving an important energy estimate for the following
homogeneous resolvent problem, associated with system \eqref{eq:sol-vv-problem}:
\begin{equation}
\label{eq:sol-resolvent-v-problem-homogeneous} 
\begin{aligned}
& \left(\mu - U_s\right) \vv' + U_s' \vv
- \frac{\ii}{k} \vv''' = f,\\
& \vv\vert_{y=0} = 0, \quad \vv'\vert_{y=0} = 0,
\quad \lim_{y \rightarrow +\infty} \vv' = 0,
\end{aligned}
\end{equation}
with a general forcing $f$. We intend to obtain energy bounds that rely on the
inviscid part of the equation so that they can be uniform with respect to $k$
for large $|k|$.

\begin{lemma}
 \label{thm:homo-resolvent-bounds}
 Let $\omega$ be a weight function satisfying \eqref{hyp:rho} and let $\eta > 0$. There exist $K_\eta, C_\eta > 0$ such that, 
 for $(\mu, k) \in \C \times \N^*$ with $k \geqslant K_\eta$, 
 $\Im \mu \geqslant \eta$ and $f \in L^2(\omega)$, 
 system \eqref{eq:sol-resolvent-v-problem-homogeneous} 
 has a unique solution $\vv$ satisfying the estimate
 \begin{equation} \label{energy:v'.1}
 \| \vv' \|_{L^2(\omega)} \leqslant C_\eta \| f \|_{L^2(\omega)},
 \end{equation}
and if $|\Re \mu| \geqslant 4$, the additional estimate  
 \begin{equation} \label{energy:v'.2}
 \| \vv' \|_{L^2(\omega)} 
 \leqslant 
 C_{\eta} \left|\frac{1}{\Re \mu}\right| \| f \|_{L^2(\omega)}.
 \end{equation}
Moreover, the map $(\mu,k) \mapsto \vv$ is analytic with respect to $\mu$.
\end{lemma}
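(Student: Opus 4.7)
The plan is to establish the a priori estimate \eqref{energy:v'.1} by a weighted energy argument, deduce uniqueness immediately from it, obtain existence by Lax--Milgram on a suitable weighted Hilbert space (equivalently, by a Galerkin truncation on $[0,R]$ with $R\to\infty$), derive the improved estimate \eqref{energy:v'.2} by renormalizing the equation, and recover analyticity from the holomorphic dependence of the operator family on $\mu$ combined with the uniform invertibility bound.

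The naive choice of multiplier $\overline{\vv'}\,\omega$ is well adapted to the boundary conditions $\vv(0)=\vv'(0)=\vv'(+\infty)=0$ and, after testing the equation and integrating by parts twice on the viscous term, produces the identity
\begin{equation*}
 \Im\mu\,\|\vv'\|_{L^2(\omega)}^2 + \tfrac{1}{k}\|\vv''\|_{L^2(\omega)}^2
 = \Im\!\int f\,\overline{\vv'}\,\omega \;-\; \Im\!\int \Us'\,\vv\,\overline{\vv'}\,\omega
 \;-\; \tfrac{1}{k}\,\Re\!\int \vv''\,\overline{\vv'}\,\omega'.
\end{equation*}
The weight commutator is controlled by $C_\omega$ from \eqref{hyp:rho} and absorbed by $\tfrac{1}{2k}\|\vv''\|_{L^2(\omega)}^2$ once $k$ is large, and the forcing is absorbed by $C_\eta\|f\|_{L^2(\omega)}^2+\tfrac{\eta}{4}\|\vv'\|_{L^2(\omega)}^2$. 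The obstacle is the coupling term, handled below. For \eqref{energy:v'.2}, since $\Us$ takes values in $[0,1]$, when $|\Re\mu|\geq 4$ one has $|\mu-\Us|\geq|\Re\mu|-1\geq 3$ uniformly in $y$; dividing the equation by $\mu-\Us$ before testing, the renormalized coupling coefficient $\Us'/(\mu-\Us)$ is now of size $|\Re\mu|^{-1}$, which yields the extra gain.

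The core technical difficulty is that the coupling term $\Im\int\Us'\,\vv\,\overline{\vv'}\,\omega$ is only controlled, via \cref{thm:poincare} and the decay of $\Us'$ from \eqref{eq:hyp-uj-2}, by a fixed constant --- depending on $\Us$ and $\omega$ but not on $\mu$ --- times $\|\vv'\|_{L^2(\omega)}^2$, with no smallness available, so it cannot be absorbed into $\Im\mu\,\|\vv'\|_{L^2(\omega)}^2$ when $\eta$ is small. The remedy I would implement is to introduce the characteristic variable $\chi:=\vv/(\mu-\Us)$ (which also satisfies $\chi(0)=\chi'(0)=\chi'(+\infty)=0$ since $\Im\mu>0$ guarantees $\mu\neq\Us(0),\Us(+\infty)$), for which the inviscid part becomes a pure derivative $(\mu-\Us)^2\chi'$. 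Testing the resulting equation
\begin{equation*}
 (\mu-\Us)^2\chi' - \tfrac{\ii}{k}\bigl[(\mu-\Us)\chi\bigr]''' = f
\end{equation*}
against the modified multiplier $\overline{(\mu-\Us)\chi'}\,\omega$ makes the imaginary part of the inviscid contribution equal to the coercive quantity $\Im\mu\int|\mu-\Us|^2|\chi'|^2\omega$, which bounds $\eta^2\,\|\chi'\|_{L^2(\omega)}^2$ from below. Expanding $\bigl[(\mu-\Us)\chi\bigr]'''$ produces, after integration by parts, viscous contributions involving $\Us^{(j)}$ and the weight derivative $\omega'/\omega$ that all carry a prefactor $1/k$ and can be absorbed into $\tfrac{1}{2k}\|\chi''\|_{L^2(\omega)}^2 + \tfrac{\eta^2}{2}\|(\mu-\Us)\chi'\|_{L^2(\omega)}^2$ once $k\geq K_\eta$. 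Translating back via $\vv'=-\Us'\chi+(\mu-\Us)\chi'$, Poincaré, and $|\mu-\Us|\leq|\mu|+1$ yields the stated bound on $\|\vv'\|_{L^2(\omega)}$.

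Finally, for well-posedness and analyticity: uniqueness is immediate from \eqref{energy:v'.1}; existence follows from a Galerkin scheme on truncated intervals $[0,R]$ (where the equation is a standard well-posed linear third-order ODE boundary value problem) together with the passage $R\to\infty$, the weighted a priori estimate preserving the decay at infinity; and analyticity of $\mu\mapsto\vv$ comes from the fact that the operator family defining the problem depends affinely on $\mu$ and is uniformly invertible on $\{\Im\mu\geq\eta\}$ for each $k\geq K_\eta$, so that the resolvent identity combined with the uniform norm bound gives holomorphy in the operator norm, and hence pointwise analyticity of the solution in $L^2(\omega)$.
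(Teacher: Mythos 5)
Your core remedy is precisely the paper's: they also set $\Psi := \vv/(\mu-\Us)$ (your $\chi$), with $\psi := \Psi'$, so that the inviscid part reduces to the pure derivative $(\mu-\Us)\psi$ after dividing by $\mu-\Us$. The difference is in the multiplier: you test the rewritten equation against $\overline{(\mu-\Us)\chi'}\,\omega$, whereas the paper tests the original equation against $-\ii\,\overline{\psi}\,\omega/(\mu-\Us)$; these differ by the $y$-dependent factor $\ii|\mu-\Us|^2$. The paper's choice gives the coercive quantity $\Im\mu\,\|\psi\|_{L^2(\omega)}^2$ with the \emph{unweighted} $\omega$, so every viscous remainder (the $\Us'/(\mu-\Us)$, $\Us''/(\mu-\Us)$, $\Us'''/(\mu-\Us)^2$ pieces and the $\omega'$ commutator) is $O(1/(k\eta))$ and absorbs trivially. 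Your choice produces $\Im\mu\int|\mu-\Us|^2|\chi'|^2\omega$, which is fine, but then the integration by parts of the leading viscous term sees the effective weight $|\mu-\Us|^2\omega$, and the derivative $(|\mu-\Us|^2)'=-2\Us'\,\Re(\mu-\Us)$ injects commutator terms of size $|\Re\mu|/k$ and $|\mu|^2/k$; these are still absorbable by your good term $\tfrac1k\int|\mu-\Us|^2|\chi''|^2\omega$ and the coercive part, but the smallness is no longer simply $1/k$ and your phrasing \emph{``all carry a prefactor $1/k$''} glosses over this. Two small slips to fix: the coercive quantity is bounded below by $\eta^3\|\chi'\|_{L^2(\omega)}^2$, not $\eta^2$ (one factor of $\eta$ from $\Im\mu$, two from $|\mu-\Us|^2\geq(\Im\mu)^2$); and for \eqref{energy:v'.2} you should say you take the \emph{real} part of your identity (the $|\Re\mu-\Us|\geq|\Re\mu|-1$ gain shows up on the opposite side from your coercivity because of the missing $-\ii$). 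For well-posedness the paper uses a Lax--Milgram-type variational argument directly on the half-line in the unknown $\Psi$, whereas you propose Galerkin truncation on $[0,R]$; both are standard, though with the truncation you must verify that the third-order boundary value problem on $[0,R]$ is uniquely solvable (Fredholm plus your a priori estimate localized to $[0,R]$) and that the weighted bounds pass to the limit.
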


\begin{proof}
 We obtain these results through \emph{a priori} energy estimates. 
 Assume that $\vv$ is a solution to~\eqref{eq:sol-resolvent-v-problem-homogeneous}
 such that $\vv' \in L^2(\omega)$.  
 Since $\vv(0) = 0$ and $\Im \mu>0$, we can define $\psi$ such that
 \begin{equation} \label{def:v.from.phi}
 \vv(y) = (\mu - U_s(y)) \int_0^y \psi(z) \dd z.
 \end{equation}
 Hence, inverting~\eqref{def:v.from.phi} yields:
 \begin{equation} \label{def:phi.from.v}
 \psi = \frac{\vv'}{\mu - U_s} + \frac{U_s' \vv}{(\mu-U_s)^2}.
 \end{equation}
 We test equation~\eqref{eq:sol-resolvent-v-problem-homogeneous} against
 $-\ii \conj{\psi} \omega / (\mu - U_s)$. First, the inviscid part
 of the equation yields the contribution
 \begin{equation} \label{eq:testing.1}
 \begin{split}
 \int_0^\infty
 \left[\left(\mu - U_s\right) \vv' + U_s' \vv\right]
 \frac{-\ii \conj{\psi} \omega}{\mu - U_s} 
 & = \int_0^\infty -\ii (\mu-U_s) \psi \conj{\psi} \omega \\
 & = - \ii \mu \| \psi \|^2_{L^2(\omega)}
 + \ii \int_0^\infty U_s |\psi|^2 \omega.
 \end{split}
 \end{equation}
 Second, the viscous term produces four contributions:
 \begin{equation} \label{eq:viscous.4}
 \begin{split}
 -\frac{1}{k} 
 \int_0^\infty
 \frac{\vv''' \conj{\psi} \omega}{\mu - U_s}
 &= - \frac{1}{k} \int_0^\infty \psi''\conj{\psi} \omega \\
 &\phantom{=} + \frac{3}{k} \int_0^\infty \frac{U_s'}{\mu-U_s}
 \psi' \conj{\psi} \omega \\
 &\phantom{=} + \frac{3}{k} \int_0^\infty \frac{U_s''}{\mu-U_s}
 \psi \conj{\psi} \omega \\
 &\phantom{=} + \frac{1}{k} \int_0^\infty \frac{U_s'''}{(\mu-U_s)^2}
 \vv \conj{\psi} \omega.
 \end{split}
 \end{equation}
 From~\eqref{def:phi.from.v} and the conditions $\vv(0) = \vv'(0) = 0$, one has $\psi(0) = 0$. From \cref{thm:poincare}, $\vv \in L^\infty$.
 From~\eqref{def:phi.from.v} and the condition $U_s'\to 0$ at $+\infty$, one also
 has $\psi(+\infty) = 0$. Hence we can integrate by parts the first term
 in the right-hand side of~\eqref{eq:viscous.4}:
 \begin{equation} \label{eq:viscous.4.1}
 - \frac{1}{k} \int_0^\infty \psi'' \conj{\psi} \omega
 = \frac{1}{k} \| \psi' \|^2_{L^2(\omega)}
 + \frac{1}{k} \int_0^\infty \psi' \conj{\psi} \omega',
 \end{equation}
 where, using~\eqref{hyp:rho}, one has the bound
 \begin{equation} \label{eq:viscous.4.1.2}
 \left| \frac{1}{k} \int_0^\infty \psi' \conj{\psi} \omega' \right|
 \leqslant \frac{C_\omega}{|k|} \|\psi'\|_{L^2(\omega)} \|\psi\|_{L^2(\omega)}.
 \end{equation}
 The last three terms in~\eqref{eq:viscous.4} are bounded as follows
 \begin{align}
 \left|
 \frac{3}{k} \int_0^\infty \frac{U_s'}{\mu-U_s}
 \psi' \conj{\psi} \omega
 \right|
 &\leqslant \frac{3 \|U_s'\|_\infty}{|k|\eta}
 \| \psi \|_{L^2(\omega)} \| \psi' \|_{L^2(\omega)}  \label{eq:viscous.4.2}\\
  \label{eq:viscous.4.3}
 \left| \frac{3}{k} \int_0^\infty \frac{U_s''}{\mu-U_s}
 \psi \conj{\psi} \omega \right|
 &\leqslant \frac{3 \|U_s''\|_{\infty}}{|k| \eta} \| \psi \|^2_{L^2(\omega)}, \\
 \label{eq:viscous.4.4}
 \left|
 \frac{1}{k} \int_0^\infty \frac{U_s'''(y)}{(\mu-U_s)^2}
 \vv \conj{\psi} \omega \right|
 &\leqslant \frac{C_\omega \|U_s'''\|_{L^2(\omega)}}{|k|\eta} \| \psi \|_{L^2(\omega)}^2,
 \end{align}
 where the last inequality is obtained because, thanks to~\eqref{def:v.from.phi}
 and \cref{thm:poincare} applied to $\psi$, one has
 \begin{equation}
 \left\| \frac{\vv}{\mu-U_s} \right\|_{\infty} 
 = \left\| \int \psi \right\|_{\infty}
 \leqslant C_\omega \|\psi\|_{L^2(\omega)}.
 \end{equation}
 Finally, we find from testing the forcing
 \begin{equation} \label{eq:testing.f}
 \left| \int_0^\infty
 \frac{-\ii f \conj{\psi} \omega}{\mu- U_s}  \right|
 \leqslant \left\| \frac{1}{\mu -U_s} \right\|_{\infty}
 \| f \|_{L^2(\omega)} \| \psi \|_{L^2(\omega)}.
 \end{equation}
 
 \noindent \textbf{Proof of the first energy estimate.} 
 Estimate~\eqref{energy:v'.1} is obtained by considering the real part of 
 the testing procedure described above. Gathering~\eqref{eq:testing.1},
 \eqref{eq:viscous.4.1}, \eqref{eq:viscous.4.1.2}, 
 \eqref{eq:viscous.4.2}, \eqref{eq:viscous.4.3}, \eqref{eq:viscous.4.4}
 and \eqref{eq:testing.f} and using Young's inequality yields the existence of a constant $C_\eta$ depending only on $\omega$, $U_s$  and $\eta$ such that
 \begin{equation} \label{eq:merged.energy.1}
\frac{1}{2|k|} 
 \| \psi' \|^2_{L^2(\omega)}
  + \left( \Im \mu - \frac{C_\eta}{|k|} \right)
 \| \psi \|^2_{L^2(\omega)} 
  \leqslant \left\| \frac{1}{\mu - U_s} \right\|_{\infty}
 \| f \|_{L^2(\omega)} \| \psi \|_{L^2(\omega)}.
 \end{equation}
 Therefore, from~\eqref{eq:merged.energy.1} and the assumption $\Im \mu \geqslant \eta$, there exists $K_\eta$ large enough such that for $|k| \geqslant K_\eta$,
 \begin{equation} \label{energy.phi.1}
 \| \psi \|_{L^2(\omega)}
 \leqslant \frac{2}{\eta} \left\| \frac{1}{\mu - U_s} \right\|_{\infty}
 \| f \|_{L^2(\omega)}.
 \end{equation}
 Differentiating~\eqref{def:v.from.phi} gives
 \begin{equation} \label{def:v'.from.phi}
 \vv'(y) = - U_s'(y) \int_0^y \psi(z) \dd z
 + (\mu - U_s(y)) \psi(y)
 \end{equation}
 Combining~\eqref{def:v'.from.phi} and \cref{thm:poincare} gives
 \begin{equation} \label{eq:energy.v'.1.1}
 \| \vv' \|_{L^2(\omega)}
 \leqslant C_\omega \|U_s'\|_{L^2(\omega)} \| \psi \|_{L^2(\omega)}
 + \| \mu -U_s \|_{\infty} \| \psi \|_{L^2(\omega)}.
 \end{equation}
 By the assumption $\Im \mu \geqslant \eta$, one obtains:
 \begin{equation} \label{eq:cdelta}
 \| \mu -U_s \|_{\infty}
 \left\| \frac{1}{\mu - U_s} \right\|_{\infty}
 \leqslant \max \left(3, \frac{3 \|U_s\|_\infty}{\eta}\right).
 \end{equation}
 The combination of~\eqref{energy.phi.1},~\eqref{eq:energy.v'.1.1} and~\eqref{eq:cdelta} proves estimate~\eqref{energy:v'.1}.
 
 \bigskip
 \noindent \textbf{Proof of existence and uniqueness.} 
The well-posedness of the linear equation \eqref{eq:sol-resolvent-v-problem-homogeneous} follows from the application of a variant of the Lax-Milgram Lemma to a suitable variational formulation of the equation. First, following the proof of the energy estimates of  \cref{thm:homo-resolvent-bounds}, it is useful to consider the equation on 
\begin{equation}
\Psi:=\frac{\phi}{\mu-U_s}=\int_0^y \psi. 
\end{equation}
The estimates from \cref{thm:homo-resolvent-bounds} suggest that we look for $\Psi$ in the functional space
\begin{equation}
\cH:=\left\{ \theta\in H^2_\mathrm{loc}(\R_+, \C),\ \theta(0)=\theta'(0)=0,\ \|\theta'\|_{L^2(\omega)}+  \|\theta''\|_{L^2(\omega)}<+\infty\right\}.
\end{equation}
Note that equation \eqref{eq:sol-resolvent-v-problem-homogeneous} can be written in terms of $\Psi$ as
\begin{equation}\label{eq:varform-Psi}
-\frac{\ii}{k} \Psi^{(3)} + \frac{3\ii}{k} \frac{U_s'}{\mu-U_s} \Psi'' + \frac{3\ii}{k} \frac{U_s''}{\mu-U_s} \Psi'+ \frac{\ii}{k} \frac{U_s^{(3)}}{\mu-U_s} \Psi + (\mu-U_s)\Psi'= \frac{f}{\mu-U_s}.  
\end{equation}
Multiplying the above equation by $-\ii \bar \theta' \omega$ where $\theta$ is an arbitrary test function in $\cH$, we obtain the following variational formulation:
\begin{equation}\label{eq:resolvent-varform}
a(\Psi, \theta)=\int_0^\infty-\ii \frac{f}{\mu-U_s}\bar \theta' \omega ,
\end{equation}
where
\begin{eqnarray*}
a(\Psi, \theta)&=&\frac{1}{k}\int_0^\infty\left( \Psi''\bar \theta'' \omega + \Psi'' \bar \theta' \omega'\right)\\
&&+ \frac{1}{k}\int_0^\infty\left( \frac{3U_s'}{\mu-U_s}\Psi'' + \frac{3U_s''}{\mu-U_s}\Psi' + \frac{U_s'''}{\mu-U_s}\Psi\right) \bar \theta' \omega\\
&&-\ii\int_0^\infty (\mu-U_s)\Psi' \bar \theta'\omega.
\end{eqnarray*}
Using the assumptions on $U_s$ and $\omega$, $a$ is a continuous bilinear form on $\cH$. Hence by the Riesz-Fr\'echet representation theorem, there exists a linear continuous application $A:\cH\to \cH$ such that 
\begin{equation}
\langle Au, \theta\rangle= a(u,\theta)
\end{equation}
for all $\theta\in \cH$, where $\langle \theta_1, \theta_2\rangle=\int_0^\infty  \theta_1' \bar \theta_2' \omega$. We now need to prove that $A$ is a bijection from $\cH$ to $\cH$. We cannot apply exactly the  Lax-Milgram Lemma, since $a(\theta,  \theta)$ is not real-valued. However, we follow the proof outlined after the  Lax-Milgram Lemma in \cite{MR2759829}. Indeed, the energy estimates of the previous paragraph show that if $A\Psi=0$, then $\Psi=0$. Moreover, $A$ has a closed range since for all $\Psi \in \cH$, according to \eqref{eq:merged.energy.1}
\begin{equation}
\|\Psi\|_{\cH}^2 \leq C_\eta |a(\Psi,  \Psi)| \leq C \| A \Psi\|_{\cH} \| \Psi\|_{\cH}.
\end{equation}
Eventually, the image of $A$ is dense since for any $\Psi \in \cH$,
\begin{equation}
a(\Psi, \theta)=0\quad \forall \theta\in \cH \Rightarrow \Psi=0.
\end{equation}
Hence $A$ is a bijection, and there exists a unique $\Psi\in\cH$ such that
\begin{equation}
A\Psi= -\ii \frac{ f}{ \mu - U_s}.
\end{equation}
Hence $\Psi$ satisfies \eqref{eq:resolvent-varform} for all $\theta\in \cH$, and is a variational solution of \eqref{eq:varform-Psi}.

 \bigskip
 \noindent \textbf{Proof of analyticity.} 
 Like for a normal resolvent, it follows directly that the map $\mu \to \vv$ is 
 analytic in $\{ \mu \in \C, \enskip \Im \mu > 0 \}$. Indeed, let us fix some
 $\mu_0$ with $\Im \mu_0 > 0$. We apply the previous energy estimates with
 $\eta := (\Im \mu_0) / 2$. We fix some $k \geqslant K$ where $K$ is such that the
 energy estimate holds for this $\eta$. We consider the linear operator
 \begin{equation}
  \mathcal{L}_k :
  \left\{ 
  \begin{aligned}
   H^2(\omega) & \to L^2(\omega), \\
   g & \mapsto \Us g - \Us' \int_0^y g + \frac{\ii}{k} g'',
  \end{aligned}
  \right.
 \end{equation}
 where $H^2(\omega)$ denotes the Sobolev space of functions $h$ vanishing at zero and
 infinity and such that $h$, $h'$ and $h''$ belong to $L^2(\omega)$. Hence,
 $\mathcal{L}_k$ defines a bounded operator. The main energy estimate
 \eqref{energy:v'.1} proves that the resolvent $(\mu-\mathcal{L}_k)^{-1}$
 exists and is bounded for $\mu$ in a neighborhood of $\mu_0$. Hence, one can 
 write  $(\mu-\mathcal{L}_k)^{-1}$ as a von Neumann series:
 \begin{equation}
  (\mu-\mathcal{L}_k)^{-1} = \sum_{j=0}^{+\infty} (\mu_0-\mu)^j (\mu_0-\mathcal{L}_k)^{-j-1}.
 \end{equation}
 This proves that, for any $\eta > 0$ and for any large enough fixed $k$, the map $\mu \to \vv$
 is holomorphic on the half-plane $\Im \mu \geqslant \eta$.
 
 \bigskip
 \noindent \textbf{Proof of the second energy estimate.} 
 Estimate~\eqref{energy:v'.2} is obtained by considering the imaginary part 
 of the testing procedure described above. Recall that $\|U_s\|_\infty=1$.
 Gathering~\eqref{eq:testing.1}, \eqref{eq:viscous.4.1}, \eqref{eq:viscous.4.1.2}, 
 \eqref{eq:viscous.4.2}, \eqref{eq:viscous.4.3}, \eqref{eq:viscous.4.4}
 and \eqref{eq:testing.f} yields the existence of a constant $C > 0$ such that:
 \begin{equation} \label{eq:merged.energy.2}
 \left(\left|\Re \mu\right| - 1 - \frac{C}{|k|}\right)
 \| \psi \|_{L^2(\omega)}
 \leqslant \frac{C}{|k|} 
 \| \psi' \|_{L^2(\omega)}
 + \left\| \frac{1}{\mu - U_s} \right\|_{\infty}
 \| f \|_{L^2(\omega)}.
 \end{equation}
 Using~\eqref{eq:merged.energy.1} and~\eqref{energy.phi.1}, we obtain the control
 \begin{equation} \label{energy.phi'.1}
 \frac{1}{\sqrt{|k|}} \| \psi'\|_{L^2(\omega)}
 \leqslant 
  C_\eta \left\| \frac{1}{\mu - U_s} \right\|_{\infty}
 \| f \|_{L^2(\omega)}.
 \end{equation}
 Combining~\eqref{eq:merged.energy.2} and~\eqref{energy.phi'.1} yields, 
 for $|k| \geqslant K$ large enough:
 \begin{equation} \label{energy.phi.b.1}
 \left(\left|\Re \mu\right| - 2\right) \| \psi \|_{L^2(\omega)}
 \leqslant 
 2 \left\| \frac{1}{\mu - U_s} \right\|_{\infty} \| f \|_{L^2(\omega)},
 \end{equation}
 from which we obtain eventually for $|\Re \mu| \geqslant 4$ 
 \begin{equation} \label{energy.phi.b.2}
 \| \psi \|_{L^2(\omega)} \leqslant 
 4 \left|\frac{1}{\Re \mu}\right|  \left\| \frac{1}{\mu - U_s} \right\|_{\infty}
  \| f \|_{L^2(\omega)}.
 \end{equation}
 Relating this estimate of $\psi$ in~\eqref{energy.phi.b.2} 
 to $\vv'$ using~\eqref{eq:energy.v'.1.1} and~\eqref{eq:cdelta} as 
 for the previous estimate proves~\eqref{energy:v'.2}.
\end{proof}

\begin{cor}
 \label{thm:unit-forced-resolvent-problem}
 Let $\eta > 0$. 
 There exists $K$ such that for $k \geqslant K$ and $\Im \mu \geqslant \eta$, 
 system~\eqref{eq:sol-vv-problem} has a unique solution. 
 For any fixed $k$, the map $\mu \mapsto \vv$ is analytic.
\end{cor}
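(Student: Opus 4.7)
The plan is to reduce the inhomogeneous boundary-value problem \eqref{eq:sol-vv-problem} to the homogeneous one \eqref{eq:sol-resolvent-v-problem-homogeneous} already handled by \cref{thm:homo-resolvent-bounds}, by subtracting off a fixed ``lift'' function that absorbs the boundary condition $\vv'(0)=1$. Concretely, pick once and for all a smooth function $\vv_L\in C^\infty(\R_+,\R)$, independent of $\mu$ and $k$, satisfying $\vv_L(0)=0$, $\vv_L'(0)=1$, $\vv_L'(+\infty)=0$, and with enough decay at infinity that $\vv_L,\vv_L',\vv_L'',\vv_L'''\in L^2(\omega)$; an explicit choice such as $\vv_L(y)=y\,e^{-Ay}$ for $A$ large enough works, since the growth condition $\ln\omega(y)/(1+y)\in L^\infty$ in \eqref{hyp:rho} bounds $\omega$ by an exponential.

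Writing $\vv=\vv_L+\tvv$, the function $\tvv$ must satisfy $\tvv(0)=\tvv'(0)=0$, $\lim_{y\to+\infty}\tvv'=0$, and
\begin{equation*}
  (\mu-\Us)\tvv'+\Us'\tvv-\tfrac{\ii}{k}\tvv'''=f_\mu,
  \qquad
  f_\mu := F-\bigl[(\mu-\Us)\vv_L'+\Us'\vv_L-\tfrac{\ii}{k}\vv_L'''\bigr].
\end{equation*}
The first task is to verify that $f_\mu\in L^2(\omega)$. For $F$ this follows from the decay assumptions \eqref{eq:hyp-uj-2} (both $1-\Us$ and $y\Us'$ lie in $L^2(\omega)$). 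For the other terms, $\Us\in L^\infty$, $\Us'\in L^\infty$ and the choice of $\vv_L$ give everything needed. Moreover $f_\mu$ depends affinely (hence analytically) on $\mu$, since only the term $(\mu-\Us)\vv_L'$ involves $\mu$.

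With $f=f_\mu\in L^2(\omega)$ in hand, \cref{thm:homo-resolvent-bounds} provides, for $k\geqslant K_\eta$ and $\Im\mu\geqslant\eta$, a unique solution $\tvv$ of the homogeneous problem together with the estimate $\|\tvv'\|_{L^2(\omega)}\leqslant C_\eta\|f_\mu\|_{L^2(\omega)}$. Setting $\vv:=\vv_L+\tvv$ yields a solution of \eqref{eq:sol-vv-problem}; uniqueness follows at once from that of the homogeneous problem, because the difference of two solutions of \eqref{eq:sol-vv-problem} satisfies \eqref{eq:sol-resolvent-v-problem-homogeneous} with $f=0$ and therefore vanishes. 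For the analyticity statement, fix $k$ and $\mu_0$ with $\Im\mu_0>0$; applying \cref{thm:homo-resolvent-bounds} with $\eta=(\Im\mu_0)/2$ shows that the resolvent-type map $\mu\mapsto\tvv$ is analytic near $\mu_0$ exactly as in the last paragraph of the proof of that lemma (Neumann series in $\mu-\mu_0$), and since $\mu\mapsto f_\mu$ is affine, the composite map $\mu\mapsto\vv=\vv_L+\tvv$ is analytic on $\{\Im\mu>0\}$.

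The only nontrivial point in this argument is checking that the lift can be chosen so that $f_\mu\in L^2(\omega)$ uniformly in a neighborhood of each $\mu$, which reduces to matching the (at most exponential) growth of $\omega$ by taking $A$ large in $\vv_L(y)=y\,e^{-Ay}$; all remaining manipulations are routine and rely directly on \cref{thm:homo-resolvent-bounds}.
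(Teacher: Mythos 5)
Your proposal is correct and follows essentially the same route as the paper: decompose $\vv$ as a smooth lift satisfying the boundary conditions plus a remainder solving the homogeneous problem \eqref{eq:sol-resolvent-v-problem-homogeneous}, then invoke \cref{thm:homo-resolvent-bounds}. You have usefully made explicit the choice of lift $\vv_L(y)=y\,e^{-Ay}$, the verification that $f_\mu\in L^2(\omega)$ via \eqref{eq:hyp-uj-2} and \eqref{hyp:rho}, and the affine dependence of $f_\mu$ on $\mu$ needed for analyticity, but the underlying argument is the one the paper gives.
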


\begin{proof}
 Write $\vv = \vv_B + \vv_H$ where $\vv_B$ is a smooth decaying function
 satisfying the boundary conditions. Then $\vv_H$ satisfies a
 homogeneous problem of the form \eqref{eq:sol-resolvent-v-problem-homogeneous} 
 for some computable forcing and we can apply \cref{thm:homo-resolvent-bounds}.
\end{proof}

In particular, thanks to \cref{thm:unit-forced-resolvent-problem}, 
the definition of $\Phi(\mu,k) = \lim_{+ \infty} \vv$
announced in \eqref{eq:def-Phi} makes sense 
for all $\mu\in \C$ such that $\Im \mu \geq \eta>0$ 
and for $k\geq K_\eta$.

\subsection{Convergence of the inviscid approximation}

In order to show the convergence of $\Phi$ to $\Phi_{\inviscid}$ we
control the difference between the solution $\vv$ of
\eqref{eq:sol-vv-problem} and its approximation $\vv_{\inviscid}$ from
\eqref{eq:inviscid-approximation-v}.

\begin{lemma}
 \label{thm:convergence-approx-inv}
 Let $\eta > 0$. There exist constants $C, K > 0$ such that, 
 for any $(\mu, k) \in \C \times \N^*$ with $k \geqslant K$ 
 and $\Im \mu \geqslant \eta$, it holds that
 \begin{equation} \label{phi.approx.phi.inv}
 \left| \Phi\left(\mu,k\right)
 - \Phi_{\inviscid}\left(\mu\right) \right|
 \leqslant
 C |k|^{-\frac{1}{2}}.
 \end{equation}
\end{lemma}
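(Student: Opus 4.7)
The strategy is to decompose $\vv - \vv_\inviscid$ as an explicit viscous boundary-layer corrector, concentrated on the scale $|k|^{-1/2}$, plus a small remainder controlled via \cref{thm:homo-resolvent-bounds}. Such a corrector is unavoidable because $\vv$ and $\vv_\inviscid$ have an $O(1)$ mismatch in boundary data at $y=0$: one has $\vv'(0)=1$ while $\vv_\inviscid'(0) = F(0)/\mu = 1/\mu$, and the homogeneous resolvent estimates cannot directly absorb a nonzero slope at the origin.

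First I would construct the corrector. Set $\alpha := \sqrt{-\ii\mu}$, chosen with $\Re\alpha > 0$ (possible since $\Re(-\ii\mu) = \Im\mu > 0$), and define
\begin{equation*}
\vv_\bl(y) := \frac{1-1/\mu}{\alpha |k|^{1/2}}\bigl(1 - \ee^{-\alpha |k|^{1/2} y}\bigr).
\end{equation*}
Direct computation yields $\vv_\bl(0)=0$, $\vv_\bl'(0) = 1-1/\mu$, $\vv_\bl'(+\infty)=0$, $|\vv_\bl(+\infty)| \leqslant C_\eta |k|^{-1/2}$, and, crucially, the pointwise identity $\mu \vv_\bl' = \frac{\ii}{k}\vv_\bl'''$ (a consequence of $\alpha^2 = -\ii\mu$). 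Setting $w := \vv - \vv_\inviscid - \vv_\bl$, one then has $w(0) = w'(0) = w'(+\infty) = 0$, and combining the three equations gives
\begin{equation*}
(\mu - \Us) w' + \Us' w - \frac{\ii}{k} w''' = \Us \vv_\bl' - \Us' \vv_\bl + \frac{\ii}{k}\vv_\inviscid''' =: f_w.
\end{equation*}

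The next step is to show $\|f_w\|_{L^2(\omega)} \leqslant C|k|^{-1/2}$. The key cancellation is that, by choice of $\vv_\bl$, the full viscous-transport contribution $(\mu - \Us)\vv_\bl' - \frac{\ii}{k}\vv_\bl'''$ collapses to $-\Us\vv_\bl'$, with $|\Us(y)| \leqslant \|\Us'\|_\infty y$ vanishing at the origin. Since $\vv_\bl'(y) = (1-1/\mu)\ee^{-\alpha |k|^{1/2} y}$ is concentrated on scale $|k|^{-1/2}$ and $\Re\alpha \geqslant \sqrt{|\mu|/2} \geqslant \sqrt{\eta/2}$ is uniformly bounded below, the rescaling $\zeta = |k|^{1/2} y$ (for which the exponential decay in $\zeta$ dominates the at-most-exponential growth of $\omega$ when $|k|$ is large) gives $\|\Us \vv_\bl'\|_{L^2(\omega)} \leqslant C|k|^{-3/4}$. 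The bound $\|\vv_\bl\|_\infty \leqslant C_\eta|k|^{-1/2}$ combined with $\Us' \in L^2(\omega)$ yields $\|\Us' \vv_\bl\|_{L^2(\omega)} \leqslant C_\eta|k|^{-1/2}$. Finally, differentiating the first-order ODE \eqref{eq:sol-vvinv-problem} (or using the explicit formula \eqref{eq:inviscid-approximation-v}) together with the decay hypotheses on $\Us$ shows that $\|\vv_\inviscid'''\|_{L^2(\omega)}$ is uniformly bounded for $\Im\mu \geqslant \eta$, so the last term contributes $O(|k|^{-1})$.

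Applying \cref{thm:homo-resolvent-bounds} to $w$ then gives $\|w'\|_{L^2(\omega)} \leqslant C_\eta |k|^{-1/2}$, and \cref{thm:poincare} (using $w(0)=0$) promotes this to $|w(+\infty)| \leqslant C_\omega \|w'\|_{L^2(\omega)} \leqslant C|k|^{-1/2}$. Since
\begin{equation*}
\Phi(\mu,k) - \Pinv(\mu) = \lim_{y\to+\infty}(\vv - \vv_\inviscid)(y) = w(+\infty) + \vv_\bl(+\infty),
\end{equation*}
and both terms are $O(|k|^{-1/2})$, the estimate \eqref{phi.approx.phi.inv} follows. The main obstacle is designing $\vv_\bl$: a naive cut-off matching only the boundary data would leave an $O(1)$ forcing on a region of measure $|k|^{-1/2}$, yielding only $|k|^{-1/4}$ in $L^2(\omega)$ and losing the sharp rate. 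The decisive trick is to take $\vv_\bl$ solving the dominant balance $\mu \vv_\bl' = \frac{\ii}{k}\vv_\bl'''$ exactly, so that the residual acquires the extra vanishing factor $\Us(y) = O(y)$, which gains another $|k|^{-1/4}$ from the boundary-layer integration.
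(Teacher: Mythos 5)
Your proof is correct and takes essentially the same approach as the paper: the corrector $\vv_\bl$ is identical to the paper's $\vv_{\mathrm{c}}$ (the paper writes it as $(1-1/\mu)\int_0^y e^{-\sqrt{-\ii k\mu}z}\dd z$, but $\sqrt{-\ii k\mu} = \alpha|k|^{1/2}$, so the two expressions coincide), the decomposition $\vv = \vv_\inviscid + \vv_\bl + w$ is the same, the three residual terms and their $L^2(\omega)$ rates ($|k|^{-3/4}$, $|k|^{-1/2}$, $|k|^{-1}$) match, and the closing step via \cref{thm:homo-resolvent-bounds} and \cref{thm:poincare} is the paper's argument. As a minor remark, your sign for the residual forcing $f_w$ is actually correct, whereas the paper's stated $\tilde F$ has an overall sign typo (inconsequential for the bound).
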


\begin{proof}
 We decompose $\vv = \vv_{\inviscid} + \vc + \tvv$, where $\vc$ is a corrector intended to catch up the boundary condition $\vv'(0)=1$ (which is not satisfied by $\vv_{\inviscid}$ and corresponds to the no-slip property). We define $\vc$ as the solution to
 \begin{equation} \label{system:vc}
 \begin{aligned}
 & \mu \vc' - \frac{\ii}{k} \vc''' = 0, \\
 & 
 \vc\vert_{y=0} = 0, \quad 
 \vc'\vert_{y=0} = 1 - \frac{1}{\mu}, \quad 
 \lim_{y \rightarrow +\infty} \vc' = 0,
 \end{aligned}
 \end{equation}
 which can be solved explicitly as
 \begin{equation} \label{vc.explicit}
 \vc(y) = \left( 1 - \frac{1}{\mu}\right) \int_0^y \ee^{- \sqrt{-\ii k \mu}z} \dd z,
 \end{equation}
 where $\sqrt{- \ii k \mu}$ can be determined analytically such that 
 $\Re \sqrt{- \ii k \mu} \geqslant \sqrt{k \eta}$. Using~\eqref{system:vc},
 one gets that $\tvv$ is the solution to
 \begin{equation}
 \begin{aligned}
 & \left(\mu - U_s\right) \tvv' + U_s' \tvv
 - \frac{\ii}{k} \tvv'''
 = \tilde F, \\
 & \tvv\vert_{y=0} = 0, \quad \tvv'\vert_{y=0} = 0,
 \quad \lim_{y \rightarrow +\infty} \tvv' = 0
 \end{aligned}
 \end{equation}
 with
 \begin{equation} \label{def.fw}
\tilde F := - \frac{\ii}{k} \vv_{\inviscid}''' + U_s' \vc - U_s \vc'.
 \end{equation}
 We need to estimate the size of $\tilde F$ in $L^2(\omega)$ to apply \cref{thm:homo-resolvent-bounds}. First, from~\eqref{vc.explicit},
 \begin{equation} \label{fw.1}
 \left\| U_s' \vc \right\|_{L^2(\omega)} 
 \leqslant
 \|\vc\|_\infty \| U_s' \|_{L^2(\omega)}
 \leqslant
 \left(1+\frac{1}{\eta}\right) \frac{1}{\sqrt{\eta k}} \| U_s' \|_{L^2(\omega)}.
 \end{equation}
 Moreover, since $U_s(0) = 0$, writing $U_s$ as the primitive of $U_s'$ gives:
 \begin{equation} \label{fw.2}
 \begin{split}
 \left\| U_s \vc' \right\|_{L^2(\omega)}^2
 & = \left|1+\frac{1}{\mu}\right|^2 
 \int_0^{\infty} \left(\int_0^y U_s'(z) \dd z\right)^2 e^{-2 y \Re \sqrt{-\ii k \mu}}
 \omega(y) \dd y \\
 & \leqslant \left(1+\frac{1}{\eta}\right)^2 \| U_s' \|_\infty^2
 \int_0^{\infty} y^2 e^{-2 y \Re \sqrt{-\ii k \mu}} \omega(y) \dd y \\
 & \leqslant 2 C_\omega \left(1+\frac{1}{\eta}\right)^2 \| U_s' \|_\infty^2 \left(\Re \sqrt{- \ii k \mu}\right)^{-3},
 \end{split}
 \end{equation}
 where we used that $\omega(y) \leqslant C_\omega e^{C_\omega y}$ (see~\eqref{hyp:rho}) and 
 we assumed that $\Re \sqrt{- \ii k \mu} \geqslant C_\omega$, which is true for 
 $K \geqslant C_\omega^2 / \eta$. 
 
 We now turn towards $\vv_{\inviscid}'''$. When differentiating~\eqref{eq:inviscid-approximation-v} thrice, all terms can be written as $g_j(y) U_s^{(j)}(y)$ where $j=1,2,3$ and $|g_j(y)|\leq C_\eta (1+y)$. Therefore, using assumption \eqref{eq:hyp-uj-2}, we obtain
 \begin{equation} \label{fw.3}
 \| \vv_{\inviscid}''' \|_{L^2(\omega)} \leqslant C_{\inviscid}
 \end{equation}
 for some constant $ C_{\inviscid}$ depending on $\eta$.
 
 Hence, gathering~\eqref{fw.1},~\eqref{fw.2} and~\eqref{fw.3} into~\eqref{def.fw}
 and applying \cref{thm:homo-resolvent-bounds,thm:poincare}, proves that there 
 exists a constant $C_\eta$ such that
 \begin{equation}
 \| \tvv \|_\infty 
 \leqslant
 C_\omega \| \tvv' \|_{L^2(\omega)} 
 \leqslant C_\eta |k|^{-\frac{1}{2}}.
 \end{equation}
 Therefore, we can estimate the difference
 \begin{equation}
 \left| \Phi\left(\mu,k\right)
 - \Phi_{\inviscid}\left(\mu\right) \right|
 = \left| \lim_{y\to \infty} ( \tvv + \vc ) \right|
 \leqslant  C_\eta |k|^{-\frac{1}{2}} + \left|\left(1+\frac{1}{\mu}\right) \frac{1}{\sqrt{-\ii k \mu}}\right|.
 \end{equation}
 which gives the claimed result~\eqref{phi.approx.phi.inv}.
 \end{proof}
 
\subsection{Proof of the persistence of inviscid instabilities}
\label{sec:proofs-123}

We use \cref{thm:convergence-approx-inv} to prove \cref{thm:pdt-sufficient} and \cref{thm:ibl-sufficient}.

Assume that $\Us$ satisfies \cref{crit:pdt-sufficient}. From \cref{thm:winding},
there exists $\eta > 0$ such that the winding number of $\Pinv(\mathcal{C}_\eta)$
round $\ds$ is positive. From \cref{thm:convergence-approx-inv}, the closed curves
$\Phi(\mathcal{C}_\eta,k)$ converge uniformly (say, in the sense of the
Hausdorff distance) to $\Pinv(\mathcal{C}_\eta)$. Hence, for $k$ large enough, 
their winding number around $\ds$ is positive and, from the argument principle
(see e.g. \cite[Chapter~7]{needham1998visual})
there exists $\mu_k$ with $\Im \mu_k > \eta$ such that $\Phi(\mu_k, k) = \ds$.

Assume that $\Us$ satisfies \cref{crit:ibl-global-sufficient}. From \cref{thm:winding},
there exists $\eta > 0$ such that the winding number of $\Pinv(\mathcal{C}_\eta)$
around $0$ is positive. Since it is a smooth closed curve, its winding number around
a small neighborhood of $0$ in the complex plane stays positive. In particular,
it is positive on some interval $[0,\gamma_-]$, where $\gamma_- > 0$. 
By \cref{thm:convergence-approx-inv},
this is still true of $\Phi(\mathcal{C}_\eta,k)$ for large enough $k$. Hence,
there exists $K > 0$ such that, for $k \geqslant K$, and 
$(\sqrt{\nu} k)^{-1} \leqslant \gamma_-$, there exists $\mu_{k,\nu}$ with
$\Im \mu_{k,\nu} > 0$ such that $\Phi(\mu_{k,\nu}, k) = 1/(\sqrt{\nu}k)$.

Assume that $\Us$ satisfies \cref{crit:ibl-local-sufficient}. We proceed as
above remarking that, in this case, the curves wind around a small segment
$[\gamma_-,\gamma_+]$, which leads to instabilities for large enough $k$,
in the range of parameters $\gamma_- \leqslant (\sqrt{\nu} k)^{-1} \leqslant \gamma_+$.

Assume that $\Us''(0) > 0$. From \cref{thm:winding-physical}, there exists
$d, \gamma_- > 0$ such that, for $\gamma \geqslant \gamma_-$, there exists
$\eta(\gamma) > d \gamma^{-2}$ such that $\mathrm{W}_\gamma[\Pinv(\mathcal{C}_{\eta(\gamma)})] = 1$
and $\mathrm{d}_\gamma [\Pinv(\mathcal{C}_{\eta(\gamma)})] \geqslant d \gamma^{-2\kappa}$.
Hence, the difficulty in this case is that the 
security margin between $\Pinv(\mathcal{C}_{\eta(\gamma)})$ and
$\gamma$ depends on $\gamma$.
Assuming that $\nu k^3 \geqslant 1$, one has $k (\sqrt{\nu} k)^{4\kappa} 
\geqslant (k\sqrt{\nu})^{4\kappa-2}$. Since $\kappa < \frac{1}{2}$,
the right-hand side is greater than $(2C/d)^2$ for $k\sqrt{\nu}$
small enough, where $C$ is the constant in \cref{thm:convergence-approx-inv}. Thus $k (\sqrt{\nu} k)^{4\kappa} \geqslant (2C/d)^2$.
Hence $C |k|^{-1/2} \leqslant (d/2) \gamma^{-2\kappa}$ and,
by Rouch\'e's Theorem (see e.g. \cite[Chapter~7]{needham1998visual}), $\mathrm{W}_\gamma[\Phi(\mathcal{C}_{\eta(\gamma)},k)] = 1$.
We can conclude that, for $\sqrt{\nu} k$ small enough and $\nu k^3 \geqslant 1$,
there exists $\mu_{k,\nu}$ with $\Im \mu_{k,\nu} \geqslant d \nu k^2$
such that $\Phi(\mu_{k,\nu},k) = (\sqrt{\nu}k)^{-1}$.
This concludes the proof of \cref{thm:ibl-sufficient} in the physical regime case. 

\subsection{Approximate eigenmodes method}
\label{sec:method2}

As an illustration, we present the approximate eigenmodes method in the case of the prescribed displacement thickness problem. The method consists in two complementary steps. First, one must construct arbitrarily good approximations of the desired eigenmodes. Second, one must control the error between the true solution and the approximate solution. In the context of instability results, this is usually done by a contradiction argument (see e.g. \cite{MR2601044}).

Assume that $\Us$ satisfies \cref{crit:pdt-sufficient}, and let $\mu_0 \in \C$ with $\Im \mu_0 > 0$ such that $\Pinv(\mu_0) = \ds$. We denote by $\vv_0$ the associated inviscid solution defined in~\eqref{eq:inviscid-approximation-v}. Hence $\vv_0(y) \to \ds$ as $y \to +\infty$. Our goal is to build explicitly $\mu$ close to $\mu_0$ and $\vv$ close to $\vv_0$ such that $(\mu,\vv)$ is an arbitrarily precise solution to~\eqref{eq:sol-vv-problem}. We introduce the small parameter $\varepsilon := |k|^{-1/2}$ and consider the following asymptotic expansion:
\[
 \vv(y)  = \sum_{j=0}^{+\infty} \varepsilon^j \left(\vv^\interior_j(y) + \varepsilon \vv^\bl_j \left(\frac{y}{\varepsilon}\right) \right), \quad
 \mu  = \sum_{j=0}^{+\infty} \varepsilon^j \mu_j,
\]
which must solve the equation:
\begin{equation}
 (\mu - \Us) \vv' + \Us' \vv - \ii \varepsilon^2 \vv^{(3)} = F
\end{equation}
together with the boundary conditions:
\[
 \vv(0) = 0,\ 
 \vv'(0) = 1, \ 
 \vv'(y) \to 0, \ 
 \vv(y) \to \ds.
\]
From the impermeability boundary condition, we get that:
\begin{equation}
 \vv^\interior_0(0) = 0 \quad \textrm{and} \quad 
 \vv^\interior_{j+1}(0) = - \vv^\bl_j(0) \quad \text{for } j\geqslant 0.
\end{equation}
From the no-slip boundary condition, we get that, for $j \geqslant 0$:
\begin{equation}
 {\vv^\bl_0}'(0) = 1 - {\vv^\interior_0}'(0) \quad \textrm{and} \quad 
{\vv^\bl_j}'(0) = - {\vv^\interior_j}'(0) \quad \text{for } j\geqslant 0.
\end{equation}
The two boundary conditions at infinity do not concern the boundary
layer terms as these terms are localized near $y = 0$. So they should
only be seen as boundary conditions for the inner inviscid profiles.
We already know that $(\Us,\mu)$ is such that $\vv_0$ matches the leading order
prescribed displacement thickness condition.

\paragraph{Boundary layer profiles}

When deriving the equations satisfied by higher-order profiles, we need to take care to choose the correct expansion of the slowly varying terms. At order $j \geqslant 0$, one has:
\begin{equation}
 \begin{split}
  \mu_0 {\vv^\bl_j}' - \ii {\vv^\bl_j}^{(3)}
  & = \sum_{l=1}^{j} \left(\frac{\Us^{(l)}(0)}{l!} \zeta^l - \mu_l\right) {\vv^\bl_{j-l}}'(\zeta) \\
 & \phantom{=} + \sum_{l=0}^{j-1} \frac{\Us^{(l+1)}(0)}{l!} \zeta^{l} {\vv^\bl_{j-l-1}}(\zeta),
 \end{split}
\end{equation}
with the conventions that, for $j=0$, the sums are empty. These equations are solved iteratively, lifting one boundary condition at each step. The solutions $\vv^\bl_j$ are polynomials times an exponentially decaying profile of the form $z\mapsto \exp\left(-z\sqrt{-\ii \mu_0}\right)$, where the square root is chosen with positive real part.

\paragraph{Interior profiles}

At each step, we must solve:
\begin{equation}
 (\mu_0 - \Us) {\vv^\interior_j}' + \Us' \vv^\interior_j = g_j,
\end{equation}
where $g_0 := F$, $g_1 := - \mu_1 {\vv^\interior_0}'$ and, for $j \geqslant 2$,
\begin{equation}
 g_j := \ii {\vv^\interior_{j-2}}^{(3)} - \sum_{l=1}^{j} \mu_l {\vv^\interior_{j-l}}'.
\end{equation}
Here again, the heart of the structure is unchanged. Only the source term changes. When $g_j$, $g_j'$ and $g_j''$ tend to zero at infinity, then so do ${\vv^\interior_j}'$, ${\vv^\interior_j}''$ and ${\vv^\interior_j}'''$. This ensures the first boundary condition at infinity. To check the prescribed displacement thickness condition, we need to compute $\vv^\interior_j(\infty)$:
\begin{equation}
 \vv^\interior_j(\infty) 
 = 
 \vv^\interior_j(0) \left(1-\frac{1}{\mu_0}\right)
 +
 (\mu_0-1) \int_0^{+\infty} \frac{g_j}{(\mu_0-\Us)^2}.
\end{equation}
Hence, for $j \geqslant 1$, $\vv_j(\infty)$ is the sum of a finite term which we cannot choose plus a term proportional to $\mu_j$:
\begin{equation}
 - \mu_j (\mu_0-1)  \int_0^{+\infty} \frac{{\vv^\interior_0}'}{(\mu_0-\Us)^2}
 = 
 \frac{2 \mu_j}{\mu_0-1} \int_0^{+\infty} \frac{1-\Us}{(\mu_0-\Us)^3}.
\end{equation}
If this coefficient does not vanish, then we can choose $\mu_j$ in order to guarantee that $\vv^\interior_j(\infty) = 0$. We can check that this condition actually correspond to the fact that $\partial \Pinv / \partial \mu (\mu_0) \neq 0$, which is a reasonable condition to be able to apply a local inverse mapping kind of reasoning. Thus, we are able to conduct this construction provided that this derivative does not vanish. 

Let us emphasize that complex analysis methods can allow us to guarantee that we have a $\mu_0$ and $U_s$ such that $\Pinv(\mu_0) = \ds$ and $\partial \Pinv / \partial \mu (\mu_0) \neq 0$. For example, if the winding number of the considered curves around $\ds$ is equal to $1$, the root $\mu_0$ must be simple
(the winding number would be at least equal to $2$ if the derivative vanished
at point $\mu_0$).

\paragraph{Approximate eigenmodes} To construct an approximate eigenmode, we cut the sum at some fixed number $J$. The, we lift the remaining boundary data (which are very small)
with some arbitrary lifting.

\section{Viscosity-induced instabilities for IBL}
\label{sec:strong}

We look for instabilities in the IBL model for which the growth rate $\lambda$ 
would scale like $k^2$. Still denoting by $\mu := \ii \lambda / k$, we look for $\mu$ 
under the form $\mu = \ii \alpha k$, where $\alpha$ depends $\nu$ but is of order $1$ 
with respect to $k$ and $\Re \alpha > 0$ so that there is blow-up.

\subsection{Heuristic approach}

In this regime, we expect that the instabilities will be caused by the viscous term
and that we must thus use another approximation for $\Phi$ than $\Pinv$. As a first
step, we consider the following reduced model of \eqref{eq:sol-vv-problem}
and \eqref{eq:self-consistency-ibl}, in which we have dropped the transport term instead of the viscous one
\begin{equation} \label{eq:sol-vvf-problem}
 \begin{aligned}
  & \mu \vv' - \frac{\ii}{k} \vv''' = F,\\
  & \vv\vert_{y=0} = 0, \quad \vv'\vert_{y=0} = 1,
  \quad \lim_{+\infty} \vv' = 0,
  \quad \lim_{+\infty} \vv = \frac{1}{\sqrt{\nu}k}.
 \end{aligned}
\end{equation}
Up to the first order, recalling that $\mu = \ii \alpha k$, one would drop the viscous
term and introduce the approximation
\begin{equation}
 \vf(y) := - \frac{\ii}{\alpha k} \int_0^y F.
\end{equation}
Since $\int_0^\infty F = 2 \ds$, equating the value of $\vf(\infty)$ with $1/(\sqrt{\nu}k)$
would yield $\alpha = - 2 \ii \sqrt{\nu} \ds$, leading to a purely imaginary eigenvalue
and thus, no blow-up. However, $\vf$ does not match the boundary condition $\vf'(0)=1$
and, as a good approximation to \eqref{eq:sol-vvf-problem}, one must rather use
$\vf + \vc$, where $\vc$ is the boundary corrector 
defined in \eqref{vc.explicit} and solution to
\eqref{system:vc}. We then obtain the following compatibility relation:
\begin{equation}
 \frac{1}{\sqrt{\nu}k} = \vf(\infty) + \vc(\infty) = 
 \frac{-2\ii \Delta_s}{\alpha k}
 + \left(1-\frac{1}{\ii \alpha k}\right) \frac{1}{k \sqrt{\alpha}}.
\end{equation}
Dropping the lower order $1/k^2$ term yields the equation
\begin{equation} \label{eq:f-alpha-nu}
 f(\alpha) = \frac{1}{\sqrt{\nu}},
\end{equation}
where we introduce
\begin{equation}
 f(\alpha) := \frac{-2\ii\ds}{\alpha} + \frac{1}{\sqrt{\alpha}}.
\end{equation}
In the following paragraphs, we prove that \eqref{eq:f-alpha-nu} has
a solution $\alpha_+$ with positive real part and that this construction
is a good approximation for the full system, thus leading to the existence
of an exact eigenmode for \eqref{eq:sol-vv-problem}.

\subsection{Validity of the approximation}

\begin{lemma}
 \label{thm:high-osc-phi-approx}
 Let $\eta > 0$. There exist constants $C$ and $K$ 
 such that for $|k| \geqslant K$ and $\alpha \in \C$ satisfying
 $\Re \alpha \geqslant \eta / k$ and $|\Im \alpha| \geqslant 4 / k$, one has
 \begin{equation} \label{eq:high-osc-phi-approx}
 \left|
 \Phi\left(\ii \alpha k, k\right)
 - \frac{1}{k} f(\alpha)
 \right|
 \leqslant \frac{C}{k^2} \left|\frac{1}{\Im \alpha}\right|
 \left( \left| \frac{1}{\alpha} \right| + \frac{1}{\sqrt{|\alpha|}}
 \right)
 \end{equation}
\end{lemma}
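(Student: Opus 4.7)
The plan is to decompose the solution $\vv$ of~\eqref{eq:sol-vv-problem} (with $\mu = \ii \alpha k$) as
\begin{equation}
 \vv = \vf + \vc + \tvv,
\end{equation}
where $\vf(y) := -\tfrac{\ii}{\alpha k}\int_0^y F$ and $\vc$ is the explicit exponentially decaying corrector~\eqref{vc.explicit}, exactly as in the heuristic discussion above. The boundary conditions are arranged so that $\vf(0)=\vc(0)=0$, $\vf'(0) = 1/\mu$, $\vc'(0) = 1 - 1/\mu$, and therefore $\tvv(0) = \tvv'(0) = 0$. Substituting into the full equation produces, after using $\mu\vf' = F$ and the defining equation of $\vc$, a homogeneous resolvent problem of the form~\eqref{eq:sol-resolvent-v-problem-homogeneous} for $\tvv$ with forcing
\begin{equation}
 \tilde F = \frac{\ii}{k\mu} F'' + U_s (\vf + \vc)' - U_s' (\vf + \vc).
\end{equation}
Under the hypotheses $\Im \mu = \Re(\alpha)\, k \geqslant \eta$ and $|\Re \mu| = |\Im \alpha|\, k \geqslant 4$, this is exactly the regime where both estimates of \cref{thm:homo-resolvent-bounds} apply. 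In particular the second estimate~\eqref{energy:v'.2} gives $\|\tvv'\|_{L^2(\omega)} \leqslant \tfrac{C_\eta}{k|\Im\alpha|}\|\tilde F\|_{L^2(\omega)}$.

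Next I would compute the asymptotic values explicitly. Using $\int_0^\infty F = 2\ds$ and $\vc(\infty) = (1-1/\mu)/(k\sqrt{\alpha})$, a direct calculation gives
\begin{equation}
 (\vf + \vc)(\infty) - \tfrac{1}{k} f(\alpha) = \frac{\ii}{\alpha k^2 \sqrt{\alpha}},
\end{equation}
whose modulus $k^{-2}|\alpha|^{-3/2}$ is already controlled by the desired right-hand side, since $|\Im\alpha| \leqslant |\alpha|$ implies $|\alpha|^{-3/2} = |\alpha|^{-1}|\alpha|^{-1/2} \leqslant |\Im\alpha|^{-1}|\alpha|^{-1/2}$. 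It then remains to bound $|\tvv(\infty)| \leqslant C_\omega \|\tvv'\|_{L^2(\omega)}$ via \cref{thm:poincare}, so the whole matter reduces to showing
\begin{equation}
 \|\tilde F\|_{L^2(\omega)} \leqslant \frac{C}{k}\left(\frac{1}{|\alpha|} + \frac{1}{\sqrt{|\alpha|}}\right).
\end{equation}

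This last step is the computational core. I would estimate each of the four contributions to $\tilde F$ separately, using the decay hypotheses~\eqref{eq:hyp-uj-2} on $\Us$: the term $\tfrac{\ii}{k\mu} F''$ contributes $O(k^{-2}|\alpha|^{-1})$; the terms $U_s \vf'$ and $U_s' \vf$ are both bounded by $C/(|\alpha|k)$ using $\|\vf\|_\infty \leqslant C/(|\alpha| k)$; for $U_s' \vc$ one uses $\|\vc\|_\infty \leqslant C/(k\sqrt{|\alpha|})$; and for $U_s \vc'$ one reuses the computation~\eqref{fw.2} from the proof of \cref{thm:convergence-approx-inv}, which yields the bound $C(\Re\sqrt{-\ii k\mu})^{-3/2} = C(k\Re\sqrt{\alpha})^{-3/2}$.

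The genuine obstacle is this last piece: a priori it only gives $C k^{-3/2} |\alpha|^{-3/4}$ rather than the target $C k^{-1}|\alpha|^{-1/2}$. Here I would use that the hypothesis $\Re\alpha \geqslant \eta/k$ (together with $\alpha$ in the right half-plane) forces $\Re\sqrt{\alpha} \geqslant \sqrt{|\alpha|}/\sqrt{2}$ and $k^2 |\alpha| \geqslant \eta k$, so that for $k$ large enough (depending on $\eta$ and $\omega$) the rewriting
\begin{equation}
 \frac{1}{k^{3/2}|\alpha|^{3/4}} = \frac{1}{k\sqrt{|\alpha|}} \cdot \frac{1}{k^{1/2}|\alpha|^{1/4}} \leqslant \frac{C}{k\sqrt{|\alpha|}}
\end{equation}
absorbs the extra factor. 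Putting everything together yields the claimed bound~\eqref{eq:high-osc-phi-approx}, and the lemma follows by fixing $K \geqslant K_\eta$ large enough that all these absorptions hold and the Plemelj-type constants from \cref{thm:homo-resolvent-bounds} are effective.
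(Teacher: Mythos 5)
Your proposal matches the paper's proof essentially line for line: same decomposition $\vv = \vf + \vc + \tvv$, same forcing $\tilde F$, same appeal to the second resolvent estimate~\eqref{energy:v'.2} (noting $|\Re\mu| = k|\Im\alpha|$) together with \cref{thm:poincare}, and the same explicit evaluation of $(\vf+\vc)(\infty) - \tfrac{1}{k}f(\alpha)$. The only place you go beyond the paper's wording is in making explicit the absorption $(k^2|\alpha|)^{-1/4} \leqslant C$ used to convert the crude $(\Re\sqrt{-\ii k\mu})^{-3/2}$ bound on $\|U_s\vc'\|_{L^2(\omega)}$ into the claimed $C_1|\alpha k^2|^{-1/2}$, a step the paper compresses into the phrase ``following~\eqref{fw.1} and~\eqref{fw.2}.''
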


\begin{proof}
 We write $\vv = \vf + \vc + \tvv$, where $\tvv$ is the solution to:
 \begin{equation}
 \begin{aligned}
 & \left(\ii \alpha k - U_s\right) \tvv' + U_s' \tvv
 - \frac{\ii}{k}\tvv'''
 = \tilde F, \\
 & \tvv\vert_{y=0} = 0, \quad \tvv'\vert_{y=0} = 0,
 \quad \lim_{y \rightarrow +\infty} \tvv' = 0,
 \end{aligned}
 \end{equation}
 with
 \begin{equation}
\tilde F:= \Us(\vf+\vc)' - \Us'(\vf+\vc) + \frac{\ii}{k} \vf'''.
 \end{equation}
 Following~\eqref{fw.1} and~\eqref{fw.2}, there exists $C_1$ such that:
 \begin{equation}
 \| \Us' \vc \|_{L^2(\omega)} + 
 \| \Us \vc' \|_{L^2(\omega)}
 \leqslant C_1 |\alpha k^2|^{-\frac{1}{2}}.
 \end{equation}
 From the assumptions on $\Us$ and $F$, there exists $C_2$ such that:
 \begin{equation}
 \| \Us' \vf \|_{L^2(\omega)} + 
 \| \Us \vf' \|_{L^2(\omega)} + 
 \frac{1}{k} \| \vf''' \|_{L^2(\omega)}
 \leqslant
 C_2 \left| \alpha k \right|^{-1}
 \end{equation}
 Choosing $K$ such that \cref{thm:homo-resolvent-bounds} applies
 and combining estimate~\eqref{energy:v'.2} with \eqref{estimate:l2rho_linfty} from \cref{thm:poincare} 
 concludes the proof of~\eqref{eq:high-osc-phi-approx}, where we have enlarged $C$
 in order to incorporate the $k^{-2}$ term from the approximation
 in the right-hand side.
\end{proof}

\subsection{Existence of roots with positive real part}

Since $\Us$ is monotone, $\ds \neq 0$. If $\nu > 0$ is small enough, 
we can solve \eqref{eq:f-alpha-nu} and find two roots
\begin{equation}
 \alpha_{\pm} = \nu
 \left(1 \pm \sqrt{1 - \frac{8 \ii \ds}{\sqrt{\nu}}} -
 \frac{4 \ii \ds}{\sqrt{\nu}}
 \right).
\end{equation}
Hence, there exists a root $\alpha_+$ with positive real part, 
depending on $\nu$, whose asymptotic behavior as $\nu$ tends
to zero is
\begin{equation}
\Re \alpha_{+} \sim 2 \nu^{3/4} \sqrt{|\ds|}
\quad \text{and} \quad
\Im \alpha_{+} \sim - 4 \nu^{1/2} \ds.
\end{equation}
We prove that, under the assumptions of \cref{thm:ibl-strong},
this unstable root also exists for the full initial system.
For a small enough $\epsilon$ there exist positive constants $c_i$
and $c_r$ such that, for small enough $\nu$ it holds for all $\alpha$
with $|\alpha_+ - \alpha| \leqslant \nu^{3/4} \epsilon$ that
\begin{equation}
\Re\, \alpha \geqslant c_r \nu^{3/4} 
 \quad \text{and} \quad|\Im\, \alpha| \geqslant c_i \nu^{1/2}
 .
\end{equation}
Applying \cref{thm:high-osc-phi-approx} with $\eta = c_r$, the estimates
apply for $k \nu^{3/4}$ large enough and we obtain the bound
\begin{equation}
 k\left|\Phi(\ii\alpha k,k) - \frac{1}{k}f(\alpha)\right| \leqslant \frac{C}{k \nu} .
\end{equation}
By construction $f(\alpha_{+}) = 1/\sqrt{\nu}$. Performing an asymptotic expansion of $f$ around $\alpha_+$,  for small enough $\nu$ we
can find a constant $c$ depending on $\eps$ such that, for $\theta \in [0,2\pi]$,
\begin{equation}
 \left|f(\alpha_+ + \epsilon \nu^{3/4} \ee^{\ii \theta}) - \frac{1}{\sqrt{\nu}}\right| \geqslant  c\, \nu^{-1/4}.
\end{equation}
Hence, if $k \nu^{3/4} > C/c$, the two closed curves
\begin{align}
\theta & \mapsto f(\alpha_+ + \epsilon \nu^{3/4} \ee^{\ii \theta}), \\
\theta & \mapsto k \Phi(\ii \alpha_+ k + \epsilon k\nu^{3/4} \ee^{\ii \theta}, k)
\end{align}
have the same winding number around $1/\sqrt{\nu}$, thanks to
Rouch\'e's Theorem (see e.g. \cite[Chapter~7]{needham1998visual}).
Thus, we can conclude 
the existence of a root $\alpha_{k,\nu}$, with
$\Re \alpha_{k,\nu} \geqslant c_r \nu^{3/4}$.
This concludes the proof of \cref{thm:ibl-strong}
under the assumption $\ds \neq 0$.

\begin{remark}
 In the context of this paper, we only consider monotone shear flows
 for simplicity. However, the construction presented in this section
 makes no use of the monotonicity assumption, except for the fact
 that $\ds \neq 0$. Hence, \cref{thm:ibl-strong} in fact holds for any
 smooth enough shear flow satisfying $\ds \neq 0$. Moreover,
 only very particular profiles satisfy $\ds = 0$ and this possibility 
 is excluded for a large realistic class of profiles.
 Indeed, let $\Us \in C^0(\R_+,\R)$ with $\Us(0) = 0$ and $\Us(y) \to 1$ as 
 $y \to +\infty$ be such that $\Us(y) \leqslant 1$ on $\R_+$. Then $\ds > 0$, 
 as the integral of a non-negative non identically vanishing function.
\end{remark}

\subsection{About the Tollmien-Schlichting instability}
\label{sec:TS}

The  Tollmien-Schlichting instability (see \cite{DrazinReid} for a thorough physical description and \cite{MR3566199,MR3464020} for a mathematical analysis) is also a viscosity induced instability at the level of the Navier-Stokes equations. It takes place in the regime 
\begin{equation} \label{eq:scalingTS}
k \propto \nu^{-3/8}, \quad \Im \mu \propto \nu^{1/8}, \quad \Re \mu \propto \nu^{1/8},
\end{equation}
and is therefore not described by the results of Theorems \ref{thm:ibl-sufficient} and \ref{thm:ibl-strong}. However, we expect it to be present within the IBL model, and we give here a short formal derivation for the sake of completeness.

As above, we look for an expansion of the form $\vv= \vv_\inviscid + \vv_c + \tvv$, where
\begin{itemize}
\item $\vv_\inviscid$ satisfies the inviscid equation 
$$ (\mu - U_s) \vv_\inviscid' + U'_s \vv_\inviscid = F $$
together with the conditions at infinity: 
$ \vv_\inviscid(\infty) = \frac{1}{\sqrt{\nu} k}, \quad \vv_\inviscid'(\infty) = 0.$
\item $\vv_c$ is a boundary layer corrector, localized near the boundary ($\vv_c$ and its derivatives should decay fast at infinity), built so that 
\begin{equation}  \label{eq:bcTS}
\vv_\inviscid(0) + \vv_c(0) \approx 0, \quad \vv_\inviscid'(0) + \vv_c'(0) \approx 1. 
\end{equation}
\item $\tvv$ is a remainder term.
\end{itemize} 
Note that we have slightly modified the definition of  the inviscid term $\vv_\inviscid$. This is due to the fact that we want our boundary layer corrector $\phi_c$ to be negligible outside a  neighbourhood of zero, whereas in our previous derivation $\phi_c$ was constant outside the boundary layer. Hence we need the inviscid part of the solution to capture the whole boundary condition at infinity. As a consequence, the inviscid term $\vv_\inviscid$ constructed in this paragraph differs from our previous definition by the addition of a term $C(\mu-\Us)$ for a suitable constant $C$. 

The inviscid part of the approximation is computed explicitly: we find 
$$ \vv_\inviscid(y) = \frac{1}{\sqrt{\nu}k} \frac{\mu-U_s}{\mu-1} - (\mu-U_s) \int^{+\infty}_y \frac{F(y')}{(\mu - U_s(y'))^2} \dd y'. $$ 
Note that as $\mu \rightarrow 0$, 
$$ \int_0^{+\infty}  \frac{F(y')}{(\mu - U_s(y'))^2} \dd y' \sim -\frac{1}{\mu \beta}, $$
where $\beta := U'_s(0) > 0$. Assuming that $|\mu|\ll 1$, it follows that 
\begin{equation} \label{eq:vvinv_0}
\vv_\inviscid(0) \approx -\frac{\mu}{\sqrt{\nu}k} + \frac{1}{\beta}, \quad   \vv_\inviscid'(0) \approx \frac{\beta}{\sqrt{\nu}k}.
\end{equation}
Note that in the Tollmien-Schlichting regime \eqref{eq:scalingTS}, we expect $\vv_\inviscid(0)$ to be of order~$1$, while $\vv_\inviscid'(0)$ is of order $\nu^{-1/8}$. 

We then look for a boundary layer corrector of the form 
$\vv_c(y)= W(\delta^{-1}y)$, where  $0<\delta\ll 1$ is the size of the boundary layer. As we want $\vv_\inviscid + \vv_c + \tvv$ to satisfy \eqref{eq:sol-vv-problem}, we expect that 
$$ (\mu - U_s) \vv_c' + U'_s \vv_c  - \frac{i}{k} \vv_c^{(3)} \approx 0. $$ 
After differentiation with respect to $y$, we get: 
$$ (\mu - U_s) \vv_c'' + U''_s \vv_c  - \frac{i}{k} \vv_c^{(4)} \approx 0. $$ 
We plug the expression for $\vv_c$ in this equation, to find
\begin{equation}
\label{eq:W-noapprox}
\left(\frac{\mu}{\delta^2}-\beta \frac{\xi}{\delta}\right) W''(\xi) + U_s''(0) W (\xi) - \frac{i}{k\delta^4} W^{(4)}(\xi) \approx 0.
\end{equation}
Note that we used the approximations $U_s(y) \approx \beta y = \beta \delta \xi$ and $U''_s(y) \approx U''_s(0)$, valid in the boundary layer.  Setting $\delta:=(k\beta)^{-1/3}$, we observe that we can further neglect the term $U_s''(0)  W(\xi)$ in \eqref{eq:W-noapprox} in the asymptotics $|k|\to \infty$. Hence, $W''$ is the solution of the Airy-type equation
\begin{equation*}
(-i\mu k^{1/3} \beta^{-2/3} + i \xi) W''(\xi) - W^{(4)}(\xi)=0.
\end{equation*}
We obtain
\begin{equation*}
W''(\xi)= C \Ai (\eta + e^{i\pi/6} \xi),
\end{equation*}
where $\eta:= -k^{1/3}\beta^{-2/3} \mu e^{i\pi/6}$ and $\Ai$ is the Airy function of the first kind. From the decay condition at infinity, we deduce that 
$$ W(\xi) = C e^{-i\pi/3} \Ai(\eta + e^{i \pi/6} \xi, 2) $$ 
where $\Ai(\cdot,k)$ is the $k$-th antiderivative of $\Ai$ that decays along the ray $e^{i \pi/6} \mathbb{R}_+$. 

Eventually, we use this expression in \eqref{eq:bcTS}, which yields: 
$$ \vv_\inviscid(0)  + C e^{-i\pi/3} \Ai(\eta, 2) \approx 0, \quad \vv_\inviscid'(0) + C \frac{e^{-i\pi/6}}{\delta} \Ai(\eta, 1) \approx 1. $$
In the regime \eqref{eq:scalingTS}, the $1$ at the right-hand side of the second equation can be neglected, and we find 
$$ \frac{\vv_\inviscid(0)}{\vv_\inviscid'(0)} \approx e^{-i\pi/6} \delta \frac{\Ai(\eta,2)}{\Ai(\eta,1)} $$
Substituting the expressions in \eqref{eq:vvinv_0} leads to
$$ -\frac{\mu}{\beta} \left(1 - \frac{\sqrt{\nu}k}{\mu \beta}\right) \approx e^{-i\pi/6} \delta \frac{\Ai(\eta,2)}{\Ai(\eta,1)} $$
or using the definition of $\eta$:
$$ \left(1 - \frac{\sqrt{\nu}k}{\mu \beta}\right) \approx \frac{\Ai(\eta,2)}{\eta \Ai(\eta,1)} $$
We recognize here the leading order of the Tollmien-Schlichting dispersion relation: see \cite[chapter 28]{DrazinReid}, equations (28.6)-(28.7)-(28.15)-(28.30).


\section{Construction of unstable shear flows}
\label{sec:construction}

The previous sections of this paper concerned a ``forward'' problem:
given a shear flow profile, does it exhibit instabilities? Here, we adopt
a kind of ``backwards'' point of view, described by Theorem \ref{thm:construction}: given an asymptotic behavior of
the eigenvalue (for large $k$) 
of the form $- \ii \mu k$, with $\mu$ in some spectral
domain, we build shear flows displaying such instabilities.

In order to prove \cref{thm:construction}, we proceed in two steps. First,
we construct shear flows satisfying the inviscid conditions of the form
$\Pinv(\mu) = \ds$ (see \cref{sec:construction1}), 
$\Pinv(\mu) = 0$ (see \cref{sec:construction2}) and
$\Pinv(\mu) = \gamma > 0$ (see \cref{sec:construction3}).
We prove that these constructions are possible for $\mu$ lying 
within some spectral domains. 
By analogy with the three criteria, we name our spectral domains 
$\Gamma_1, \Gamma_2$ and $\Gamma_3$ for each of the three situations
(see \cref{fig:domains}). Then, we prove that these shear flows
lead to instabilities at the viscous level 
with the claimed asymptotic behavior
(see \cref{sec:construction-viscous}).

Eventually, we prove that the explicit example shear flow 
given in \cref{rk:examples} indeed satisfies \cref{crit:pdt-sufficient} and \cref{crit:ibl-global-sufficient} (see \cref{ssec:crit2-spec}).

\begin{figure}[!ht]
 \centering
 \includegraphics[width=10cm]{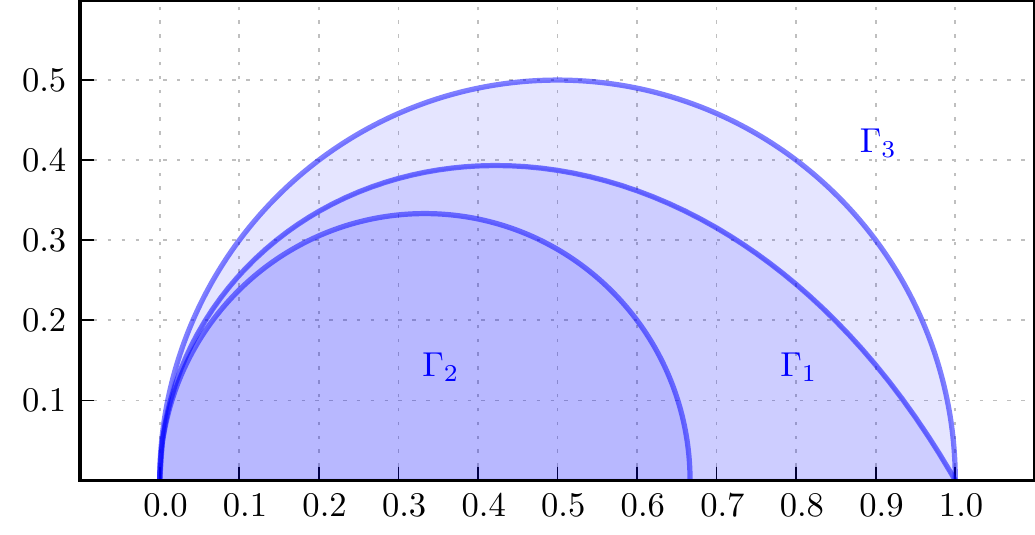}
 \caption{Illustration of the spectral domains $\Gamma_1, \Gamma_2$ and $\Gamma_3$.}
 \label{fig:domains}
\end{figure}

\subsection{First spectral domain}
\label{sec:construction1}

We introduce the following spectral domain
\begin{equation}
 \label{eq:def-gamma1}
 \Gamma_1 := \left\{
 a + \ii b \in \C, \enskip 0 < a < 1, \enskip
 0 < b < \sqrt{\frac{3a(1-a)^2}{(4-3a)}}
 \right\}.
\end{equation}
The domain $\Gamma_1$ is only slightly smaller with respect to $b$ 
than the necessary condition from \cref{thm:possible-mu} and we suspect 
that the given $\Gamma_1$ is optimal.
We intend to prove that, for any $\mu \in \Gamma_1$, we can build
a monotone shear flow $\Us$ with nice decay properties such that 
$\Pinv(\mu) = \ds$. 

Recalling the definition of $\Pinv(\mu)$ and $F$ and integrating by parts, we have
\begin{eqnarray*}
\Pinv(\mu)&=&(\mu-1)\int_0^\infty\frac{1-\Us(y) + y \Us'(y)}{(\mu-\Us(y))^2}\:dy\\
&=&\int_0^\infty\frac{1-\Us(y)}{(\mu-\Us(y))^2}\left( 2\mu-1-\Us(y)\right)\:dy.
\end{eqnarray*}
Letting
$\mu = a + \ii b$, we rewrite the integral above as
\begin{equation} \label{eq:phi-inv-p-mu}
\Phi_{\inviscid}(\mu) = \int_0^\infty
\frac{1-U_s(y)}{|\mu - U_s(y)|^4} p_\mu(U_s(y))\,\dd y
\end{equation}
with
\begin{equation} \label{eq:p-mu}
p_{\mu}(x): = (2 \mu - 1 - x) (\conj{\mu} -x)^2
\end{equation}
so that
\begin{align} 
\label{eq:p-mu-re}
\Re p_{\mu}(x) &= (2a - 1 - x) (a-x)^2 + b^2 (2a + 1 -3x) \\
\label{eq:p-mu-im}
\Im p_{\mu}(x) &= 2b \left[ (a-x)(1-a) - b^2 \right].
\end{align}
In the  expression \eqref{eq:phi-inv-p-mu} for $\Phi_{\inviscid}$ the integrand depends
only on $U_s(y)$ and not on $y$ directly. Hence by studying the image
of $p_{\mu}([0,1])$ we can determine the possible range of $\mu$.
In this system the condition for an eigenmode is
$\Phi_{\inviscid}(\mu) = \ds$ and recalling
$\ds = \int_0^\infty [1-U_s(y)] \dd y$ the condition can be
rewritten as
\begin{equation*}
0 = \int_0^\infty
\frac{1-U_s(y)}{|\mu - U_s(y)|^4} q_\mu(U_s(y))\,\dd y
\end{equation*}
with
\begin{equation*}
q_{\mu}(x) := |\mu - x|^4 - p_{\mu}(x).
\end{equation*}
Adapting the proof of \cref{thm:possible-mu}, it is clear that an
eigenmode can only exist if $q_{\mu}([0,1])$ is not restricted
to a half-plane. For the spectral domain $\Gamma_1$ from
\eqref{eq:def-gamma1}, we can show that $q_{\mu}([0,1])$ contains all
directions.

\begin{lemma}
 \label{thm:qmu123}
 For any $\mu \in \Gamma_1$, there exists $0 \leqslant x_1 < x_2 < x_3 \leqslant 1$
 such that:
 \begin{equation} \label{hyp.a123.qmu}
 \left\{ \alpha_1 q_\mu(x_1) + \alpha_2 q_\mu(x_2) + \alpha_3 q_\mu(x_3),
 \enskip (\alpha_1,\alpha_2,\alpha_3) \in \R_+^3 \right\}
 = \C.
 \end{equation}
\end{lemma}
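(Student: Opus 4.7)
The plan is to pick $x_1=0$, $x_2=x_\star$, $x_3=1$, where $x_\star := a - b^2/(1-a)$ is the unique zero of $\Im q_\mu$ on the real line, and to verify that the three images $q_\mu(x_i)$ together positively span $\C$. Since $\Im q_\mu(x) = 2b(1-a)(x-x_\star)$ is linear in $x$ (as is read off directly from~\eqref{eq:p-mu-im}), and since the $\Gamma_1$ condition $b^2 < 3a(1-a)^2/(4-3a)$ immediately implies the weaker Rayleigh bound $b^2 < a(1-a)$, we have $x_\star \in (0,a) \subset (0,1)$, and in particular $q_\mu(x_\star) \in \R$.

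We next locate the three images in the complex plane. A direct computation gives $q_\mu(1) = |\mu-1|^2\bigl[(1-a)(3-a) + b^2 + 2\ii b\bigr]$, which lies in the open first quadrant, so $\theta_3 := \arg q_\mu(1) \in (0,\pi/2)$. At $x=0$, $q_\mu(0) = [\bar\mu(\mu-1)]^2 = w^2$ with $w = (a^2-a+b^2) + \ii b$; since $a^2-a+b^2 < 0$ by the Rayleigh bound and $b>0$, $w$ lies in the open second quadrant and $\theta_0 := \arg q_\mu(0) \in (-\pi,0)$. The value $q_\mu(x_\star)$ is obtained by inserting $x = x_\star$ into the factorization $q_\mu(x) = (\bar\mu-x)^2\bigl[x^2 - (2\mu-1)x + (\mu-1)^2\bigr]$ and collapsing the resulting expression via the algebraic identity $c^3 - b^2c^2 - b^2c + b^4 = (c-b^2)(c^2-b^2)$ (with $c:=1-a$); this yields
\[
 q_\mu(x_\star) \;=\; -\,\frac{b^2(c-b^2)(c^2+b^2)^2}{c^4},
\]
and since $b^2 < a(1-a) < c$, one reads off $q_\mu(x_\star) < 0$, i.e.\ $\arg q_\mu(x_\star) = \pi$.

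It remains to verify the three-gap positive-spanning criterion: the cyclically ordered angles $\theta_0 < \theta_3 < \pi$ positively span $\C$ iff each consecutive gap is strictly less than $\pi$. Two of the gaps, $\pi - \theta_3$ and $\theta_0 + \pi$, are automatically below $\pi$ from $\theta_3>0$ and $\theta_0<0$. The remaining gap $\theta_3 - \theta_0 < \pi$ is equivalent to $q_\mu(0)$ and $q_\mu(1)$ not pointing in opposite directions. Setting $E := a^2-a+b^2$ and $F:=(1-a)(3-a)+b^2$, the imaginary part of $q_\mu(0)\,\overline{q_\mu(1)}$ is proportional to $E(F-E)+b^2$; combined with the pleasant identity $F-E = 3(1-a)$, the antipodal condition reduces to $b^2(4-3a) = 3a(1-a)^2$, i.e.\ precisely the defining boundary of $\Gamma_1$. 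A monotonicity check shows that $\theta_3(b)$ and $-\theta_0(b)$ are both strictly increasing in $b$ from $0$ at $b=0$ and their sum reaches $\pi$ only at this boundary, so $\theta_3-\theta_0 < \pi$ throughout the interior of $\Gamma_1$, which closes the argument.

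The main obstacle is the algebraic evaluation of $q_\mu(x_\star)$: the cubic identity above is the small miracle that reduces a page of expansion to a single product whose sign is transparent. The identification of the antipodality locus with $\partial\Gamma_1$ is what dictates the precise form of the inequality defining $\Gamma_1$; any strictly weaker bound on $b$ would still give $q_\mu(x_\star)<0$, but would no longer guarantee the crucial gap inequality $\theta_3-\theta_0<\pi$, which is what ultimately forces the factor $(4-3a)$ in the definition of $\Gamma_1$.
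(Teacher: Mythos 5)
Your proof is correct and selects the same three test points as the paper ($x_1=0$, $x_2=x_\star$ coinciding with the paper's $x_0$, $x_3=1$) and reaches the same geometric conclusion, but the algebra in the middle is genuinely cleaner. For $q_\mu(x_\star)$, the paper writes $A_\mu(x_0) = \frac{b^2}{1-a}T_a\bigl(\frac{b^2}{a(1-a)}\bigr)$ for a cubic $T_a$ and then argues by a root-counting/derivative argument that $T_a<0$ on $(0,1)$; your factorization $q_\mu(x) = (\bar\mu-x)^2\bigl[x^2-(2\mu-1)x+(\mu-1)^2\bigr]$ combined with the collapse to $-b^2(c-b^2)(c^2+b^2)^2/c^4$ (with $c=1-a$) makes the sign manifest under $b^2 < a(1-a) < c$ and skips the cubic analysis entirely; I verified the closed form, e.g.\ at $\mu = \tfrac12 + \tfrac{\ii}{4}$ both give $-175/4096$. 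Similarly, your $E$--$F$ reformulation with the identity $F-E=3(1-a)$ exhibits the $\Gamma_1$ inequality $b^2(4-3a) < 3a(1-a)^2$ directly, rather than first expanding to the quartic $(4-3a)b^4 + 2(1-a)^2(2-3a)b^2 - 3a(1-a)^4<0$ as in the paper and then re-factoring. One comment on presentation: the final appeal to a "monotonicity check" of $\theta_3(b)$ and $-\theta_0(b)$ is both unverified and unnecessary. Once you have $\theta_3\in(0,\pi/2)$, $\theta_0\in(-\pi,0)$, and $\Im\bigl(q_\mu(0)\overline{q_\mu(1)}\bigr)<0$, the bound $\theta_3-\theta_0<\pi$ follows outright: $\theta_3-\theta_0\in(0,3\pi/2)$ and on this range $\sin(\theta_0-\theta_3)<0$ forces $\theta_3-\theta_0\in(0,\pi)$. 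So you can delete the monotonicity sentence without loss.
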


\begin{proof}
 The proof relies on a careful study of the
 position of the curve $q_\mu([0,1])$ within the complex plane. In
 particular, we identify three directions spanning the whole complex
 plane. To this aim, we write the curve as
 \begin{equation}
 q_\mu(x) = A_\mu(x) + \ii B_\mu(x),
 \end{equation}
 where we define
 \begin{align}
 \label{def.Amu}
 A_\mu(x) & := (x-a)^2 P_\mu(x) + b^2 \left[2x^2 + (3-4a)x+(2a^2-2a-1) \right] + b^4, \\
 \label{def.Bmu}
 B_\mu(x) & := 2b \left[ (1-a)(x-a) + b^2 \right], \\
 \label{def.Pmu}
 P_\mu(x) & := x^2 + (1-2a) x + (1-a)^2.
 \end{align}
 
 \bigskip \noindent \textbf{Curves cross the negative real axis.}
 First, notice that there exists a unique $x_0\in \R$ such that $B_\mu(x_0)= \Im (q_\mu(x_0))=0$, given by
  \begin{equation}
 x_0 = \frac{a(1-a)-b^2}{1-a}.
 \end{equation}
 Since $\mu \in \Gamma$, from~\eqref{eq:def-gamma1}, $b^2 < a(1-a)$. Hence $0<x_0<a<1$. 
  We are now interested in the sign of $A_\mu(x_0)$. After simplification, one obtains
 \begin{equation}
 \begin{split}
 A_\mu(x_0)
 & = \frac{b^2}{(1-a)^4} \bigg[ b^6
 + b^4 (1-a)(1-2a)
 - b^2 (1-a)^3 (1+a)
 - (1-a)^5 \bigg] \\
 & = \frac{b^2}{1-a} T_a\left(\frac{b^2}{a(1-a)}\right),
 \end{split}
 \end{equation}
 where we introduce
 \begin{equation}
 T_a(\tau) :=
 \tau^3 a^3  + \tau^2 a^2 (1-2a)
 - \tau a (1-a^2) - (1-a)^2.
 \end{equation}
 We must study the sign of $T_a$ on $[0,1]$.
 For any $a \in (0,1)$, $T_a$ is a polynomial of degree three which behaves like $T_a(\tau) \sim \tau^3 a^3$ when $\tau \to \pm \infty$. Moreover, one checks that
 \begin{align}
 T_a(0) & = -(1-a)^2 < 0, \\
 T_a(1) & = -(1-a) < 0, \\
 T_a'(0) & = - a (1-a^2) < 0.
 \end{align}
 Since $T_a'(0) < 0$ and $T_a$ tends to $-\infty$ at $-\infty$, $T_a$ has a local maximum in $(-\infty,0)$. If there existed $\tau_+ \in (0,1)$ such that $T_a(\tau_+) \geq 0$, then $T_a$ would have a local maximum within $(0,1)$ because both $T_a(0)$ and $T_a(1)$ are negative. Since $T_a'(0) < 0$, $T_a$ would also have a local minimum within $(0,\tau_+)$. But $T_a'$ is a polynomial of degree two and cannot have three distinct roots. So $T_a(\tau) < 0$ on $(0,1)$ for any $a \in (0,1)$. This yields the conclusion: $\Re q_\mu(x_0) < 0$.
 
 \bigskip \noindent
 \textbf{Curves cover more than a half-plane.}
 We now prove the following inequality, which we will use in the next paragraph:
 \begin{equation} \label{PDT.ineq.bqq}
 \Im\left( q_\mu(0) \overline{q_\mu(1)}\right) < 0.
 \end{equation}
 Plugging in the definitions~\eqref{def.Amu},~\eqref{def.Bmu} and~\eqref{def.Pmu}, factoring and simplifying by positive terms, we obtain that inequality~\eqref{PDT.ineq.bqq} is equivalent to:
 \begin{equation} \label{PDT.ineq.bqq2}
 (4-3a) b^4 + 2 (1-a)^2 (2-3a) b^2 - 3 a(1-a)^4 < 0.
 \end{equation}
 Seeing~\eqref{PDT.ineq.bqq2} as a second order polynomial in $b^2$, this inequality is satisfied if and only if $b^2$ is smaller than its positive root. Hence, the condition amounts to:
 \begin{equation}
 b^2 < (1-a)^2 \frac{3a}{4-3a},
 \end{equation}
 which is precisely the definition of the set $\Gamma_1$.
 
 \bigskip \noindent
 \textbf{Existence of generating directions.}
 Let us assume that $b > 0$. First, from~\eqref{def.Amu} and~\eqref{def.Bmu},
 $q_\mu(1)$ lies within the upper-right quarter plane. Second, thanks
 to~\eqref{PDT.ineq.bqq}, $q_\mu(0)$ lies under the diagonal line
 passing through $q_\mu(1)$. Third, $\Im q_\mu(0) < 0$. 
 
 Since $q_\mu(x_0) \in (-\infty, 0)$, it follows immediately that 0 belongs to the interior of the convex envelope of the points $q_\mu(0)$, $q_\mu(x_0)$ and $q_\mu(1)$.
 This concludes the proof of \cref{thm:qmu123}.
\end{proof}

When $q_{\mu}$ spans the whole complex plane, we can construct a
shear flow profile $U_s$ by concentrating $U_s$ on appropriate values. 
This follows immediately from the following lemma by choosing $\varphi=q_\mu$.

\begin{lemma}
 \label{thm:phi123}
 Let $\mu \in \C$ with $\Im \mu \not = 0$ and
 $\varphi \in C^0([0,1], \C)$. Assume that there exists
 $0 \leq x_1 < x_2 < x_3 \leq 1$ such that:
 \begin{equation} \label{hyp.a123}
 \left\{ \alpha_1 \varphi(x_1) + \alpha_2 \varphi(x_2) + \alpha_3 \varphi(x_3),
 \enskip (\alpha_1,\alpha_2,\alpha_3) \in \R_+^3 \right\}
 = \C.
 \end{equation}
 Then there exists a smooth increasing shear flow profile $U_s$ with
 $U_s(0) = 0$ converging exponentially to $1$ at infinity such that:
 \begin{equation} \label{ccl.a123}
 \int_0^{\infty} \frac{(1-U_s(y))}{|\mu - x|^4}\, \varphi(U_s(y))\, \dd y = 0.
 \end{equation}
\end{lemma}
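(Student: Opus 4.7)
The natural approach is to construct $U_s$ through its inverse function $h_s := U_s^{-1} : [0,1) \to [0,\infty)$, or equivalently through the density $\rho := h_s'$ on $(0,1)$. Via the change of variable $u = U_s(y)$, the integral condition~\eqref{ccl.a123} rewrites as
\begin{equation}
\int_0^1 \frac{(1-u)}{|\mu-u|^4}\, \varphi(u)\, \rho(u)\, \dd u = 0,
\end{equation}
and the required properties of $U_s$ translate into the following constraints on $\rho$: smooth and strictly positive on $[0,1)$, with $\int_0^1 \rho = +\infty$ and $\rho(u) \sim C/(1-u)$ as $u \to 1^-$ (the latter asymptotic forcing $1 - U_s(y)$ to decay exponentially). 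Since the hypothesis~\eqref{hyp.a123} is precisely the statement that $0$ lies in the interior of the convex hull of $\{\varphi(x_1), \varphi(x_2), \varphi(x_3)\}$, which is an open condition, continuity of $\varphi$ lets me assume without loss of generality that $0 < x_1 < x_2 < x_3 < 1$ by perturbing any endpoint slightly inwards.

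The plan is to seek $\rho$ in the form $\rho := \rho_0 + \lambda_1 \eta_1 + \lambda_2 \eta_2 + \lambda_3 \eta_3$, where $\rho_0(u) := 1/(1-u)$ is a fixed base density (which on its own corresponds to the reference flow $\Us^0(y) = 1 - \ee^{-y}$), the $\eta_i \in C_c^\infty((0,1))$ are smooth non-negative bumps of unit mass supported in small pairwise disjoint open intervals $J_i$ centered at $x_i$, and the coefficients $\lambda_i \geqslant 0$ are to be determined. Plugging this ansatz into the integral reduces the problem to finding $(\lambda_1,\lambda_2,\lambda_3) \in \R_+^3$ satisfying $\sum_i \lambda_i w_i = -I_0$, with
\begin{equation}
w_i := \int_0^1 \frac{(1-u)}{|\mu-u|^4}\, \varphi(u)\, \eta_i(u)\, \dd u,
\qquad
I_0 := \int_0^1 \frac{\varphi(u)}{|\mu-u|^4}\, \dd u;
\end{equation}
here $I_0$ is finite since $|\mu-u| \geqslant |\Im \mu| > 0$ on $[0,1]$.

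As the diameter of $J_i$ shrinks to $0$, uniform continuity of $\varphi$ yields $w_i \to c_i \varphi(x_i)$, where $c_i := (1-x_i)/|\mu - x_i|^4 > 0$. The positive cone of $\{c_1 \varphi(x_1), c_2 \varphi(x_2), c_3 \varphi(x_3)\}$ coincides with that of $\{\varphi(x_i)\}_i$, namely $\C$, so $0$ lies in the interior of their convex hull; by openness of this condition, for $J_i$ chosen narrow enough the cone of $\{w_1,w_2,w_3\}$ also equals $\C$, and one can select $\lambda_i \geqslant 0$ with $\sum_i \lambda_i w_i = -I_0$. The resulting $\rho$ is smooth and strictly positive on $[0,1)$ with $\rho(u) = 1/(1-u)$ in a neighborhood of $u=1$ (since the bumps are compactly supported away from $1$); setting $h_s(u) := \int_0^u \rho$ and $U_s := h_s^{-1}$, the inverse function theorem produces a smooth, strictly increasing shear flow with $U_s(0) = 0$ and $1 - U_s(y)$ decaying like $\ee^{-y}$. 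The main obstacle in this scheme is the perturbation stability of the cone condition, which reduces to the elementary geometric fact that if $0$ lies in the interior of a non-degenerate triangle in $\R^2$, the same remains true after any sufficiently small perturbation of the vertices; this can be justified through continuity of the barycentric coordinates.
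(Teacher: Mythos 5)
Your proof is correct and follows essentially the same strategy as the paper: construct $U_s$ through its inverse via a density $\rho$ consisting of a fixed base term $1/(1-u)$ (which enforces exponential decay) plus a positive combination of narrow bumps centered at $x_1, x_2, x_3$, then use the robustness of the cone condition~\eqref{hyp.a123} under small perturbations to choose the coefficients. The only cosmetic differences from the paper's argument are notational (unit-mass bumps, nonnegative versus strictly positive coefficients, explicit identification of the cone condition with $0$ lying in the interior of the triangle); the substance is identical.
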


\begin{proof}
 Let $\theta \in \mathcal{C}^\infty(\R,\R_+)$ be a smooth function,
 compactly supported in $[-1,1]$ with $\int \theta = 1$.  By
 continuity of $\varphi$, we can assume that $0 < x_1$ and $x_3 < 1$,
 while preserving assumption~\eqref{hyp.a123}.  For $1 \leq i \leq 3$
 and $0 < \eta < \min(x_1,1-x_3)$, we define
 \begin{equation} \label{def.hi}
 h_{i, \eta}(x) := \frac{1}{\eta} \theta\left(\frac{x-x_i}{\eta}\right).
 \end{equation}
 We also introduce the following complex numbers:
 \begin{equation}
 \psi_{i,\eta} := \int_0^1 \frac{(1-x)}{|\mu-x|^4} \varphi(x) h_{i,\eta}(x) \dd x.
 \end{equation}
 Since $\varphi$ is continuous and the $h_{i,\eta}$ are approximations of
 Dirac masses located at the $x_i$, we deduce from the
 assumption~\eqref{hyp.a123} that there exists $\eta > 0$ small enough such
 that
 \begin{equation} \label{hyp.apsi123}
 \left\{ \alpha_1 \psi_{1,\eta} + \alpha_2 \psi_{2,\eta} + \alpha_3 \psi_{3,\eta},
 \enskip (\alpha_1,\alpha_2,\alpha_3) \in \R_+^3 \right\}
 = \C.
 \end{equation}
 With $\eta$ fixed, we now define
 \begin{equation}
 h(x) := \alpha_1 h_{1,\eta}(x) + \alpha_2 h_{2,\eta}(x) + \alpha_3 h_{3,\eta}(x)
 + \frac{1}{1-x},
 \end{equation}
 where $\alpha_1, \alpha_2$ and $\alpha_3$ are positive real numbers
 chosen such that:
 \begin{equation} \label{eq.hsumnull}
 \int_0^1 \frac{(1-x)}{|\mu-x|^4} \varphi(x) h(x) \dd x = 0.
 \end{equation}
 Indeed, from~\eqref{hyp.apsi123},
 $\int_0^1 \varphi(x) |\mu -x|^{-4} \dd x$ can be expressed as a
 positive linear combination of the $\psi_{i,\eta}$.
 From~\eqref{def.hi}, we have $h(x) > 0$ for any $x \in [0,1)$.  We
 now define
 \begin{equation} \label{def.H}
 H(x) := \int_0^x h(x') \dd x'.
 \end{equation}
 Since $h > 0$ and $h(x) \sim_{x\to 1^-} (1-x)^{-1}$, $H$ defines a continuous
 increasing bijection from $[0,1)$ to $[0,+\infty)$. Moreover, for
 $x > x_3 + \eta$ (we assumed that $x_3 + \eta < 1$), we have
 \begin{equation} \label{eq.H.asympt}
 H(x)
 = \int_0^{x_3+\eta} h(x') \dd x' + \int_{x_3+\eta}^{x} \frac{\dd x'}{1-x'}
 = H_0 - \ln(1-x),
 \end{equation}
 where $H_0 \in \R_+$ is a fixed constant. We now define the shear-flow profile
 as
 \begin{equation} \label{def.UH}
 U_s(y) := H^{-1}(y).
 \end{equation}
 One checks that $U_s$ is an increasing function, with $U_s(0) = 0$ and
 $U_s(y) \to 1$ as $y \to +\infty$. From~\eqref{eq.H.asympt},
 for $y$ large enough:
 \begin{equation}
 U_s(y) = 1 - \ee^{H_0} \ee^{-y}.
 \end{equation}
 Moreover, the key spectral condition \eqref{ccl.a123} is satisfied. Indeed:
 \begin{equation}
 \int_0^{+\infty} \frac{(1-U_s(y))}{|\mu - \Us(y)|^4} \varphi(U_s(y)) \dd y
 = \int_0^1  \frac{(1-x)}{|\mu-x|^4} \frac{\varphi(x)}{U_s'(U_s^{-1}(x))} \dd x.
 \end{equation}
 From~\eqref{def.H} and~\eqref{def.UH}, $U_s'(U_s^{-1}(x))=1/h(x)$.
 Thus,~\eqref{ccl.a123} follows from~\eqref{eq.hsumnull}.
\end{proof}

\subsection{Second spectral domain}
\label{sec:construction2}

We introduce the following spectral domain
\begin{equation} \label{eq:def-gamma2}
 \Gamma_2 :=
 \left\{ a + \ii b \in \C, 0<a<\frac{2}{3}, 0<b<\sqrt{\frac{a(2-3a)}{3}} \right\}.
\end{equation}
For any $\mu \in \Gamma_2$, we prove that 
there exists a shear flow such that $\Pinv(\mu) = 0$. 
Let $x_0$ be such that $\Im p_\mu(x_0) = 0$. Using \eqref{eq:p-mu-im},
$x_0 - a = - b^2 / (1-a)$ and $x_0 \in (0,1)$. At this $x_0$, one has:
\begin{equation}
 \Re p_\mu(x_0) = \frac{b^2}{(1-a)^3} \left(b^2 + (1-a)^2\right)^2.
\end{equation}
Hence, there exists $x_0 \in (0,1)$ with $p_\mu(x_0) \in \R_+$. 
Then, we consider the condition
\begin{equation}
 \Im \left( p_\mu(1) \overline{p_\mu(0)} \right) > 0.
\end{equation}
We obtain that it is satisfied if $\mu \in \Gamma_2$. In such a case, $\Re p_\mu(1) < 0$, $\Im  p_\mu(1) < 0$, $\Im  p_\mu(0) > 0$, and
$0$ is contained within the interior of the triangle $p_\mu(0),p_\mu(x_0)$
and $p_\mu(1)$. Hence, \cref{thm:phi123} applied with $\varphi = p_\mu$ 
yields the existence of the claimed profile.

\subsection{Third spectral domain}
\label{sec:construction3}

We introduce the following spectral domain
\begin{equation} \label{eq:def-gamma3}
 \Gamma_3 := \left\{ a + \ii b \in \C, 0 < a < 1, 0 < b < \sqrt{a(1-a)} \right\}.
\end{equation}
Let $\gamma > 0$ and $\mu \in \Gamma_3$. 
We prove that there exists a shear flow such 
that $\Pinv(\mu) = \gamma$.
We know that there exists $x_0 \in (0,1)$ such that $p_\mu(x_0) \in \R^*_+$.
Moreover:
\begin{equation}
 \Im(p_\mu'(x_0)) = - \frac{2 b (1-a)}{((x_0-a)^2+b^2)^2} \neq 0.
\end{equation}
Hence, there exist $x_\pm \in (0,1)$ with $\Re (p_\mu(x_\pm)) > 0$ 
and $\pm \Im(p_\mu(x_\pm)) > 0$.

\bigskip 

We turn to the construction of $\Us$. Let $\theta \in C^\infty(\R,\R_+)$,
compactly supported in $[-1,1]$ with $\int \theta = 1$. 
For $\eps>0$ small, define
\begin{align}
 h_\pm(x) & := \frac{1}{\epsilon} \theta \left(\frac{x-x_\pm}{\epsilon}\right), \\
 \psi_\pm & := \int_0^1 (1-x) |\mu-x|^{-4} p_\mu(x) h_\pm(x) \dd x.
\end{align}
Hence $h_\pm \geqslant 0$ and $\psi_\pm \in \C$. If $\epsilon > 0$ is small enough,
one has $\Re(\psi_\pm) > 0$ and $\pm \Im(\psi_\pm) > 0$ because we have
chosen approximations of unity. We also choose $\epsilon$ such that
$x_\pm -\epsilon >0$ and $x_\pm + \epsilon < 1$.
We choose
\begin{equation}
 h(x) := \alpha_- h_-(x) + \alpha_+ h_+(x) + \frac{\beta}{1-x},
\end{equation}
where $\alpha_\pm ,\beta > 0$ need to be chosen. Hence
\begin{equation}
 \int_0^1 (1-x) |\mu-x|^{-4} p_\mu(x) h(x) \dd x = \alpha_- \psi_- + \alpha_+ \psi_+ + \beta \psi,
\end{equation}
where $\psi \in \C$ is defined as $\psi := \int_0^1 |\mu-x|^{-4} p_\mu(x)\dd x$.
We try to find positive coefficients such that
\begin{equation}
 \alpha_- \psi_- + \alpha_+ \psi_+ + \beta \psi = \gamma.
\end{equation}
By symmetry, we assume that $\Im(\psi) \geqslant 0$. Let $\beta, \sigma > 0$. We define
\begin{align}
 \alpha_+ & := - \sigma \beta \Im(\psi_-) > 0, \\
 \alpha_- & := \sigma \beta \Im(\psi_+) + \beta \frac{\Im(\psi)}{-\Im(\psi_-)} > 0.
\end{align}
This ensures that $\Im(\alpha_- \psi_- + \alpha_+ \psi_+ + \beta \psi ) = 0$.
Moreover
\begin{equation}
 \begin{split}
 \Re \left(\alpha_- \psi_- + \alpha_+ \psi_+ + \beta \psi \right) 
  & = \beta  \Big[ \Re(\psi) 
 +  \frac{\Im(\psi)\Re(\psi_-)}{-\Im(\psi_-)} \\
 & + \sigma \left( \Im(\psi_+)\Re(\psi_-) - \Im(\psi_-) \Re(\psi_+)
 \right) \Big]
 \end{split}
\end{equation}
For $\sigma$ large enough, the term between the brackets is positive because 
by construction $\Im(\psi_+)\Re(\psi_-) - \Im(\psi_-) \Re(\psi_+) > 0$.
Once such a $\sigma$ is fixed, one can choose $\beta > 0$ such that the 
product is equal to $\gamma$. Therefore, we have built an $h$ such that:
\begin{equation}
\int_0^1 (1-x) |\mu-x|^{-4} p_\mu(x) h(x) \dd x = \gamma.
\end{equation}
From $h > 0$, one defines $\hs(x) := \int_0^x h(x')\dd x'$
and then $\Us(z) := \hs^{-1}(z)$. The shear flow then satisfies
the integral spectral condition. Moreover, for
$z$ large enough, one has:
\begin{equation}
 \Us(z) = 1 - \exp\left(\frac{H_0}{\beta}\right) \exp\left(-\frac{z}{\beta}\right),
\end{equation}
where $H_0$ is a fixed constant, so all decay properties are satisfied.
This concludes the proof of \cref{thm:construction} for the IBL model.

\subsection{Construction of viscous eigenmodes}
\label{sec:construction-viscous}

At this stage, for each spectral domain $\Gamma_i$ for $i=1,2,3$ and for any $\mu \in \Gamma_i$, we are able to construct a shear flow $\Us$ such that the associated $\Pinv$ satisfies $\Pinv(\mu)=\gamma_i$, with $\gamma_1=\Delta_s$, $\gamma_2=0$ and $\gamma_3$  an arbitrary positive number.
At a formal level, these shear flows are expected to satisfy Criterion $i$. However, this might be lengthy to
prove due to possible corner cases. Moreover, even if we succeeded, 
applying \cref{thm:pdt-sufficient} or \cref{thm:ibl-sufficient} 
would not be sufficient to obtain the precise asymptotic behavior 
of the eigenvalues claimed in \cref{thm:construction}. We explain here
how to circumvent both problems simultaneously using once again complex analysis
arguments.

\paragraph{The PDT problem}

We start with the easier case of the prescribed displacement thickness
problem. Let $\mu \in \Gamma_1$. Thanks to \cref{sec:construction1}, there
exists a smooth monotone $\Us$ such that $\Pinv(\mu) = \ds$.
Since $\Pinv$ is holomorphic near $\mu$ and non constant,
there is a non vanishing derivative of $\Pinv$ at $\mu$. Thus, there 
exists $c\in \C$ and $n \in \N^*$ such that, 
one has $\Pinv(\mu+z) -\ds \sim c z^n$ as $z \to 0$.
In particular, $\Pinv$ maps small enough circles 
$\mu+r \ee^{\ii\theta}$ to closed curves close to small circles $\ds + c r^n \ee^{n \ii \theta}$
that have a positive winding number around $\ds$.
Once the circle radius is fixed (small enough such that
the local Taylor approximation holds), we
can consider a sequence of radiuses $r_k := r |k|^{-\alpha}$.
From \cref{thm:convergence-approx-inv}, the distance between
the viscous curve $\Phi(\mu+r_k \ee^{\ii \theta},k)$
and the inviscid curve $\Pinv(\mu+r_k\ee^{\ii\theta})$ is
bounded by $Ck^{-1/2}$. Hence, by Rouch\'e's Theorem (see e.g. \cite[Chapter~7]{needham1998visual}), it has a positive winding number around $\ds$ and we can conclude to the existence of eigenvalues $\lambda_k = -\ii k \mu_k$ with $\mu_{k} \to \mu$
provided that $k^{1/2-n\alpha} \geqslant 2C/(|c| r^n)$.
For example, we can choose $\alpha = 1/(4n)$.

\paragraph{The IBL problem}

We use the same idea. The small difference is that we want to allow
a range of parameters $\gamma = (\sqrt{\nu}k)^{-1}$ which is not reduced
to a single point $\ds$ but is a small interval. This prevents us from
having the convergence $\lambda_{k,\nu} \sim - \ii k \mu$. But we will still prove,
as claimed, that $|\lambda_{k,\nu}/k + \ii \mu|$ can be made as small as wanted.
We separate the two cases.

\begin{itemize}
 \item Let $\mu \in \Gamma_2$. From \cref{sec:construction2}, there exists 
 a smooth monotone $\Us$ such that $\Pinv(\mu) = 0$. Using the same arguments
 as above, for $\epsilon > 0$ small enough, $\Pinv$ maps small circles
 $\mu + \epsilon \ee^{\ii\theta}$ to (curves close to) small circles $c \epsilon^n \ee^{n\ii\theta}$.
 Once such an $\epsilon > 0$ is fixed, we define 
 $\gamma_+ := |c|\epsilon^n / 4$. Thus, the circles wind around the whole segment $[0,\gamma_+]$ with some positive minimal distance at least $2 \gamma_+$.
 For $k \geqslant K := (C/\gamma_+)^2$, thanks to \cref{thm:convergence-approx-inv},
 $\Phi(\mu+\epsilon \ee^{\ii\theta})$ also winds around the whole segment
 $[0,\gamma_+]$ and hence, for any $\gamma := (\sqrt{\nu}k)^{-1} \leqslant \gamma_+$
 there exists an eigenvalue $\lambda_{\nu,k} := - \ii k \mu_{k,\nu}$
 where $\mu_{k,\nu}$ is at a distance at most $\epsilon$ from $\mu$.
 
 \item Let $\mu \in \Gamma_3$. From \cref{sec:construction3}, for any $\gamma > 0$, there exists a smooth monotone $\Us$ such that $\Pinv(\mu) = \gamma$. Proceeding
 likewise yields circles winding around a small segment $[\gamma_-,\gamma_+]$
 and completes in a similar way the proof of \cref{thm:construction}.
\end{itemize}

\subsection{Almost explicit unstable shear flows}
\label{ssec:crit2-spec}

As announced in \cref{rk:examples}, we exhibit a family of shear flows 
satisfying \cref{crit:pdt-sufficient} and \cref{crit:ibl-global-sufficient}. 
For this, we consider a shear flow with $\Us''(0) > 0$ and $\Us''$
vanishing only once for some $y_0 > 0$. In order for the shear flow to
satisfy both \cref{crit:pdt-sufficient} and \cref{crit:ibl-global-sufficient},
it is sufficient to ensure that the corresponding crossing abscissa
$\chi(y_0)$ (see \eqref{eq:chi}) satisfies $\chi(y_0)<0$.
We define
\begin{equation*}
 \hs(u) = - \log(1-u) - \alpha \frac{u^2}{2}
\end{equation*}
which, for $\alpha \in [0,4)$, defines (the inverse of) a monotonic 
exponentially decaying profile. Indeed
\begin{equation*}
\hs'(u) = \frac{1}{1-u} - \alpha u= \frac{\alpha u^2 - \alpha u +1}{1-u}
\end{equation*}
which remains positive as long as $\alpha < 4$. We then find
\begin{equation*}
\hs''(u) = \frac{1}{(1-u)^2} - \alpha
\end{equation*}
so that for $\alpha > 1$ we have $\hs''(0) < 0$ and we have the desired
behavior. Then $\hs''(u_0) = 0$ holds for
\begin{equation*}
\alpha = \frac{1}{(1-u_0)^2}.
\end{equation*}
For the crossing abscissa at $y_0 := H_s(u_0)$, we now find
\begin{equation} \label{eq:chi-u0}
 \chi(y_0) = 
\frac{\hs'(0)}{u_0}
- \PV \int_0^1 \frac{(1-u)^2}{u-u_0} \left(\frac{1}{(1-u)^2} -
\alpha\right) \dd u
= \frac{1}{u_0} + \frac{u_0-\frac{3}{2}}{(1-u_0)^2}.
\end{equation}
Thanks to \eqref{eq:chi-u0} we can consider the condition $\chi(y_0) < 0$
(in order to match \cref{crit:ibl-global-sufficient}) or the
condition $\chi(y_0) < \ds$, where $\ds = 1-\alpha/6$ 
(in order to match \cref{crit:pdt-sufficient}.
The first condition holds for $u_0 > (7-\sqrt{17})/8$
and thus for $\alpha > 64/(1+\sqrt{17})^2 \approx 2.44$.
The second condition holds for $u_0^3 - 4 u_0^2 + \frac{13}{2}u_0 - 1 > 0$.
This cubic polynomial has a single root around 
$u_0 \approx 0.17$. 
Thus the condition holds for $\alpha > \alpha_\text{min} \approx 1.45$.

\section*{Acknowledgements}

The authors warmly thank Pierre-Yves Lagr\'ee for fruitful discussions and  explanations on the different boundary layer models.
\smallskip

\noindent A.-L.D. and D.G.-V. are partially supported by the Agence Nationale de la Recherche, project Dyficolti, Grant ANR-13-BS01-0003-01. 
D.G.-V. also acknowledges the support of the Institut Universitaire de France. H.D. acknowledges support of the Universit\'e Sorbonne Paris Cit\'e, in the framework of the ``Investissements d'Avenir'', convention ANR-11-IDEX-0005, and the People Programme (Marie Curie
Actions) of the European Union’s Seventh Framework Programme
(FP7/2007-2013) under REA grant agreement n. PCOFUND-GA-2013-609102,
through the PRESTIGE programme coordinated by Campus France.    
This project has received funding from the European Research Council (ERC) under the European Union's Horizon 2020 research and innovation program Grant agreement No 637653, project BLOC ``Mathematical Study of Boundary Layers in Oceanic Motion''.

\printbibliography

\appendix

\section{Appendix: Proofs of lemmas on \texorpdfstring{$\Pinv$}{the inviscid condition}}

\subsection*{Proof of  \cref{thm:possible-mu}}

\paragraph{Holomorphy of $\Pinv$} 
From the assumptions of \cref{sec:us-assumptions}, the range of $\Us$ 
 is $[0,1]$ and $F \in L^1$. 
 Then, we show that the complex derivative of $\Pinv$ exists by 
 the dominated convergence theorem and the compact support of $\Us$. 

\paragraph{Range of possible eigenvalues} 
Using integration by parts, we transform \eqref{eq:phi_inv} as
\begin{equation} \label{eq:phi-inv-raw}
  \Pinv(\mu) = - \int_0^\infty (1-U_s) \frac{(1+U_s-2\mu)(U_s-\conj{\mu})^2}{|U_s-\mu|^4}.
\end{equation}
The imaginary part of the integrand can be expressed using
\begin{equation} \label{eq:phi-inv-im}
 \Im\Big((1+U_s-2\mu)(U_s-\conj{\mu})^2\Big)
 = 2b \Big(b^2 + (U_s - a)(1-a) \Big).
\end{equation}
Since $b > 0$, the integral \eqref{eq:phi-inv-raw} can only be real if the imaginary
part of the integrand changes sign when $\Us$ goes from $0$ to $1$. Thanks to
\eqref{eq:phi-inv-im}, this can only happen if $b^2 < a(1-a)$. Conversely, if 
$b^2 \geqslant a(1-a)$ then $\Im \Pinv(\mu) < 0$.
\qed

\subsection*{Proof of the Plemelj formula (\cref{thm:plemelj})}
\textbf{First step. We express $\Pinv$ as a singular integral.}
 For $\mu = a + \ii b \not \in [0,1]$, 
 we integrate by parts~\eqref{eq:phi_inv} as follows
 \begin{equation} 
  \label{eq:phi-inv-ibp}
  \begin{split}
  \Pinv(\mu)
  & = \int_0^\infty \frac{F}{\Us'}
  \left(\frac{-\Us' (1-\mu)}{(\Us-\mu)^2}\right) \\
  & = \int_0^\infty \frac{F}{\Us'}
  \left(
  \frac{1-\mu}{\Us-\mu} - 1
  \right)' \\
  &= \left[
  \frac{F}{\Us'}
  \left(\frac{1-\mu}{\Us-\mu}
  - 1 \right)
  \right]_0^\infty
  - \int_0^\infty
  \left(
  \frac{F}{\Us'}
  \right)'
  \left(\frac{1-\mu}{\Us-\mu}
  - 1 \right).
  \end{split}
 \end{equation}
 Since $F(0) = 1$, the boundary term at $0$ is
 \begin{equation} \label{eq:phi-inv-ibp0}
  - \frac{F(0)}{\Us'(0)} \left(\frac{1-\mu}{\Us(0)-\mu} - 1 \right)
  = \frac{1}{\mu} \frac{1}{\Us'(0)}.
 \end{equation}
 Concerning the boundary term at infinity, we rewrite it using the identity
 \begin{equation} \label{eq:phi-inv-ibp-infty}
  \frac{F(z)}{\Us'(z)} \left(\frac{1-\mu}{\Us(z)-\mu} - 1 \right)
  = \frac{1}{\Us(z)-\mu} \left( \frac{(1-\Us(z))^2}{\Us'(z)} + (1-\Us(z))z \right).
 \end{equation}
 Thanks to assumptions~\eqref{eq:hyp-us-ratio1} and~\eqref{eq:hyp-us-ratio2},
 \eqref{eq:phi-inv-ibp-infty} tends to zero as $z \to +\infty$.
 Hence, the boundary term at infinity vanishes. Eventually, gathering
 \eqref{eq:phi-inv-ibp} and \eqref{eq:phi-inv-ibp0}, we find
 \begin{equation} \label{eq:phi-inv-ibp-u''}
  \Pinv(\mu)
  = \frac{1}{\mu \Us'(0)} 
  + \int_0^\infty
  \frac{(1-\Us(z))^2}{\Us'(z)^2} 
  \frac{\Us''(z)}{\Us(z)-\mu} \dd z,
 \end{equation}
 where integral converges thanks to assumption
 \eqref{eq:hyp-us-ratio3}. Using the monotonicity of the shear flow, we can use $u := U_s(z)$ 
 as an integration variable in \eqref{eq:phi-inv-ibp-u''}. Recalling \eqref{eq:def-g}, we obtain
 \begin{equation} \label{eq:phi-inv-mu-sing}
  \Pinv(\mu) = \frac{1}{\mu \Us'(0)} + \int_0^1 \frac{g(u)}{u-\mu} \dd u. 
 \end{equation}
 By Plemelj formula (see \cite[Chapter 1]{lu1994boundary}), one obtains that $\Pinv(\mu) \to G(a)$
 as $\mu \to a$. Although this statement is quite classical, we give a proof
 below both for the sake of completeness and because we want a uniform 
 convergence rate with respect to $a$ despite the fact that $g'$ is unbounded 
 near $u \approx 1$.
 
 \bigskip \noindent 
 \textbf{Second step. We estimate the rate of convergence.} 
 We start by introducing a smooth even function $\theta : \R \to \R_+$ 
 with $\theta(0) = 1$ and supported in $[-1,1]$. Recall that $0 < a_0 < a_1 < 1$. 
 We let $\tilde{a}_1 := (1+a_1)/2$
 and define $A := \max \{2/(1-a_1), 1/a_0\}$.
 Let $a \in [a_0, a_1]$. We define $\tilde{\theta}(u) := \theta(A(u-a))$
 which is supported in $(0,\tilde{a}_1)$ and bounded in $C^1$ uniformly with respect
 to $a$. On the one hand, since $\theta$ is even, one has
 \begin{equation} \label{eq:pv-1}
   \PV \int_0^1 \frac{g(u)}{u-a} \dd u
   = \int_0^1 \frac{g(u)-g(a)}{u-a} \tilde{\theta}(u) \dd u
   + \int_0^1 \frac{1-\tilde{\theta}(u)}{u-a} g(u) \dd u.
 \end{equation}
 On the other hand, letting $\mu := a + \ii b$ where $b \in (0,1]$, one has
 \begin{equation} \label{eq:pv-2}
  \begin{split}
   \int_0^1 \frac{g(u)}{u-\mu} \dd u
   = \int_0^1 \frac{g(u)-g(a)}{u-\mu} \tilde{\theta}(u) \dd u
   & + \int_0^1 \frac{1-\tilde{\theta}(u)}{u-\mu} g(u) \dd u \\
   & + g(a) \int_0^1 \frac{\tilde{\theta}(u)}{u-\mu} \dd u.
  \end{split}
 \end{equation}
 The first two terms of \eqref{eq:pv-2} converge to the right-hand side
 of \eqref{eq:pv-1}, while the third term yields the imaginary residue.
 We prove the convergence term by term. 
 
 \bigskip \noindent
 \textit{First term}. We use a splitting between close and far values.
 \begin{equation} \label{eq:pv-3}
  \begin{split}
   & \bigg| 
    \int_0^1 \frac{g(u)-g(a)}{u-\mu} \tilde{\theta}(u) \dd u
    - \int_0^1 \frac{g(u)-g(a)}{u-a} \tilde{\theta}(u) \dd u
   \bigg| \\
   & =
   \left| 
    \int_0^1 \frac{g(u)-g(a)}{u-a} \tilde{\theta}(u) \frac{\ii b}{u-\mu} \dd u
   \right| \\
   & \leqslant
    \int_{|u-a|\leqslant\sqrt{b}} \left| \frac{g(u)-g(a)}{u-a} \tilde{\theta}(u) \right| \dd u 
    +
    \sqrt{b} \int_{|u-a|\geqslant\sqrt{b}} \left| \frac{g(u)-g(a)}{u-a} \tilde{\theta}(u) \right| \dd u \\
   & \leqslant
   3 \sqrt{b} \| g' \|_{L^\infty(0,\tilde{a}_1)} \| \theta \|_{L^\infty}.
  \end{split}
 \end{equation}
 \textit{Second term}. We use the same splitting but exclude values $u > \tilde{a}_1$.
 \begin{equation} \label{eq:pv-4}
  \begin{split}
   & \bigg| 
    \int_0^1 \frac{1-\tilde{\theta}(u)}{u-\mu} g(u) \dd u
    - \int_0^1 \frac{1-\tilde{\theta}(u)}{u-a} g(u) \dd u
   \bigg| \\
   & =
   \left| 
    \int_0^1 \frac{1-\tilde{\theta}(u)}{u-a} g(u) \frac{\ii b}{u-\mu} \dd u
   \right| \\
   & \leqslant
   \left| 
    \int_0^{\tilde{a}_1} \frac{1-\tilde{\theta}(u)}{u-a} g(u) \frac{\ii b}{u-\mu} \dd u
   \right| +
   A^2 b \int_{\tilde{a}_1}^1 \left| (1-\tilde{\theta}(u)) g(u) \right| \dd u \\
   & \leqslant
   3 A \sqrt{b} \| \theta' \|_{L^\infty} \| g \|_{L^\infty(0,\tilde{a}_1)}
   + A^2 b (1+\|\theta\|_{L^\infty}) \| g \|_{L^1(0,1)}. 
  \end{split}
 \end{equation}
 \textit{Third term}. We start with the real part
 \begin{equation} \label{eq:re.trho}
   \Re \int_0^1 \frac{\tilde{\theta}(u)}{u-\mu} \dd u
   = \int_0^1 \frac{\tilde{\theta}(u)(u-a)}{(u-a)^2 + b^2} \dd u
   = 0,
 \end{equation}
 because $\theta$ is even and the support of $\tilde{\theta}$ is fully included 
 in $(0,1)$. Then, using that $\tilde{\theta}(a) = \theta(0) = 1$, we compute
 \begin{equation} \label{eq:im.trho}
  \begin{split}
   \Im \int_0^1 \frac{\tilde{\theta}(u)}{u-\mu} \dd u
   & = \int_0^1 \frac{\tilde{\theta}(u)b}{(u-a)^2 + b^2} \dd u \\
   & = \int_{-a/b}^{(1-a)/b} \frac{\tilde{\theta}(a+bs)}{1+s^2} \dd s \\
   & = \int_{-a/b}^{(1-a)/b} \frac{\dd s}{1+s^2} 
   + \int_{-a/b}^{(1-a)/b} \frac{\tilde{\theta}(a+bs)-\tilde{\theta}(a)}{1+s^2} \dd s.
  \end{split}
 \end{equation}
 Using \eqref{eq:re.trho} and estimating the integrals in \eqref{eq:im.trho}, 
 we deduce that there exists a constant $c_1 > 0$ such that
 \begin{equation} \label{eq:pv-5}
  \left| \ii \pi - \int_0^1 \frac{\tilde{\theta}(u)}{u-\mu} \dd u \right| 
  \leqslant
  \frac{b}{1-a_1} + \frac{b}{a_0} + A \| \theta' \|_{L^\infty} b (c_1 + |{\ln b}|).
 \end{equation}
 \textit{Conclusion}. Last we estimate the difference between the 
 inverses as
 \begin{equation} \label{eq:pv-6}
  \left| \frac{1}{\mu} - \frac{1}{a} \right| 
  \leqslant \frac{b}{a^2} \leqslant \frac{b}{a_0^2} \leqslant A^2 b.
 \end{equation}
 Gathering \eqref{eq:pv-3}, \eqref{eq:pv-4}, \eqref{eq:pv-5}
 and \eqref{eq:pv-6}
 proves the main estimate \eqref{eq:plemelj}.\qed

\begin{remark} \label{rk:plemelj-strong}
 It is clear from the proof of \cref{thm:plemelj} that we can actually
 state a stronger version highlighting the dependency on $a_0$ of
 the convergence. Indeed, we have proved that, for any $a_1 \in (0,1)$,
 there exists $C, \rho > 0$ such that, for any $a_0 \in (0,a_1)$
 and any $b \in (0,\rho]$, there holds
 \begin{equation} \label{eq:plemelj-strong}
  \left| \Pinv(a+\ii b) - G(a) \right| 
  \leqslant C \left( \frac{\sqrt{b}}{a_0} + \frac{b}{a_0^2} \right).
 \end{equation}
\end{remark}

\subsection*{Proof of \cref{thm:phi-inv}}

 \paragraph{Behavior of $\Pinv$ near 0}
 
 \emph{As a first step, we investigate the behavior of the imaginary part.}
 Let $a \in [0,\frac{1}{4}]$, $b \in (0,1]$ and $\mu := a + \ii b$. Using \eqref{eq:phi-inv-mu-sing}, we write
 \begin{equation} \label{eq:im-pinv-mu}
  \Im \Pinv(\mu) = \Im \frac{1}{\mu \Us'(0)} + g(a)\, \Im \int_0^1 \frac{\dd u}{u - \mu} +
  \Im \int_0^1 \frac{g(u)-g(a)}{u-\mu} \dd u.
 \end{equation}
 We estimate the second integral in \eqref{eq:im-pinv-mu} by splitting $[0,1]$ into  $[0,\frac{1}{2}]$ and $[\frac{1}{2},1]$.
 We find that there exists $c_1 > 0$ such that
 \begin{equation}
  \begin{split}
   \bigg| \Im & \int_0^1 \frac{g(u)-g(a)}{u-\mu} \dd u \bigg|
   = \left| \int_0^1 \frac{g(u)-g(a)}{(u-a)^2+b^2}\, b\, \dd u\right| \\
   & \leqslant \|g'\|_{L^\infty(0,\frac{1}{2})} \int_0^{\frac{1}{2}} \frac{|u-a|\, b\, \dd u}{(u-a)^2 + b^2} 
   + \frac{ b (\|g\|_{L^1(0,1)} + \|g\|_{L^\infty(0,\frac12)})}{(1/4)^2+b^2} \\
   & \leqslant b (c_1 + |{\ln b}|) \|g'\|_{L^\infty(0,\frac{1}{2})} + c_1 b \|g\|_{L^1(0,1)}.
  \end{split}
 \end{equation}
 The first integral in \eqref{eq:im-pinv-mu} can be computed explicitly. Indeed
 \begin{equation}
  \Im \int_0^1 \frac{\dd u}{u - \mu} = \pi - \arctan \frac{b}{1-a} - \arctan \frac{b}{a}.
 \end{equation}
 Hence there exists $M > 0$ such that, for $b$ small enough
 \begin{equation}
  \left| \Im \Pinv(\mu) + \frac{b}{\Us'(0)(a^2+b^2)}
  - g(a) \left(\pi - \arctan \frac{b}{a}\right) \right|
  \leqslant M \sqrt{b}.
 \end{equation}
As $U''_s(0)$ is non-zero, $g(0)$ is non-zero as well. Thus, for $0 < c_1 < \pi |g(0)| /2$, there exists $c_{\pm} > 0$ such that, 
 for $a, b$ small enough, 
 \begin{equation} \label{eq:im-pinv-smaller}
  | \Im \Pinv(a+\ii b) | \leqslant c_1
  \Longleftrightarrow
  c_- \leqslant \frac{b}{a^2+b^2} \leqslant c_+.
 \end{equation}
 
 \bigskip \noindent
 \emph{As a second step, we compute the real part under the
 condition \eqref{eq:im-pinv-smaller}}. We obtain
 \begin{equation}
  \Re \int_0^1 \frac{g(u)}{u-\mu} \dd u
  = g(a) \int_0^1 \frac{u-a}{(u-a)^2+b^2}\dd u + \int_0^1 \frac{(g(u)-g(a))(u-a)}{(u-a)^2+b^2} \dd u.
 \end{equation}
 The first integral is equal to
 \begin{equation}
  g(a) \int_0^1 \frac{u-a}{(u-a)^2+b^2} \dd u
  = \frac{g(a)}{2} \ln \left(\frac{b^2+(1-a)^2}{b^2+a^2}\right).
 \end{equation}
 The second integral can be bounded as
 \begin{equation}
  \left| \int_0^1 \frac{(g(u)-g(a))(u-a)}{(u-a)^2+b^2} \dd u \right| 
  \leqslant \frac{1}{2}\|g'\|_{L^\infty(0,\frac{1}{2})} 
  + 16 (\|g\|_{L^1(\frac{1}{2},1)} + \|g\|_{L^\infty(0,\frac{1}{2})}).
 \end{equation}
 Hence there exists $C > 0$ such that
 \begin{equation}
  \Re \Pinv(\mu) 
  \geqslant \frac{a}{\Us'(0)(a^2+b^2)} + \frac{g(a)}{2} \ln \left(\frac{b^2+(1-a)^2}{b^2+a^2}\right) - C.
 \end{equation}
 Let us assume that we are in a situation when both sides of
 \eqref{eq:im-pinv-smaller} are satisfied. For $b$ small enough,
 this implies that $b/(2c_+) \leqslant a^2 \leqslant 2b/c_-$.
 Since $g$ is continuous near $0$, for $b$ small enough,
 we have the estimate
 \begin{equation}
  \left| \frac{g(a)}{2} \ln \left(\frac{b^2+(1-a)^2}{b^2+a^2}\right) \right|
  \leqslant 2 |g(0)| |{\ln b}|.
 \end{equation}
 Moreover, still for $b$ small enough, we obtain
 \begin{equation}
  \frac{a}{\Us'(0)(a^2+b^2)} \geqslant
  b^{-\frac{1}{2}} \min (\sqrt{c_+}, \sqrt{c_-}/2 ).
 \end{equation}
 Gathering these estimates proves that there exists $c > 0$ such
 that, for $a, b$ small enough, $\Re \Pinv(a+\ii b) \geqslant c/\sqrt{b}$
 when \eqref{eq:im-pinv-smaller} holds.
 
 \bigskip \noindent
 \emph{Behavior above the circle}. We consider
 the case when $b^2 \geqslant a(1-a)$.
 Using \eqref{eq:phi-inv-mu-sing}, we compute
 \begin{equation}
  \Im \Pinv(\mu) = - \frac{b}{\Us'(0)(a^2+b^2)} + \int_0^1 \frac{g(u) b}{(u-a)^2+b^2} \dd u.
 \end{equation}
 If $a$ and $b$ are small enough and $b^2 > a(1-a)$, 
 then $b/(a^2+b^2) \geqslant 1/(2b)$. Moreover $\Us'(0) > 0$
 and the integral can be bounded. Indeed
 \begin{equation}
  \left| \int_0^1 \frac{g(u) b}{(u-a)^2+b^2} \dd u \right|
  \leqslant
  \pi \| g \|_{L^\infty(0,\frac{1}{2})} + 16 b \|g\|_{L^1(\frac{1}{2},1)}.
 \end{equation}
 Hence, for $a,b$ small enough above the circle, $\Im \Pinv(a+\ii b) \leqslant -c$ for some $c > 0$.

 \paragraph{Behavior of $\Pinv$ near 1}
 
 Assumption \eqref{eq:hyp-kappa-Us} implies that 
 \begin{equation} \label{eq:hyp-kappa-g}
 g(u) \leqslant - c_\kappa (1-u)^{\kappa}
 \quad \textrm{for} \quad u \geqslant u_\kappa := \Us(y_\kappa).
 \end{equation}
 We fix $\rho_1 := (1-u_\kappa) / 2$ and consider some 
 $a \in [1-\rho_1, 1]$. We compute
 \begin{equation}
 \Im \int_0^1 \frac{g(u)}{u-\mu} \dd u
 = b \int_0^{u_\kappa} \frac{g(u)}{(u-a)^2+b^2} \dd u + b \int_{u_\kappa}^1 \frac{g(u)}{(u-a)^2+b^2} \dd u
 \end{equation}
 The first integral is bounded by $b \rho_1^{-2} \|g\|_{L^1(0,u_\kappa)}$.
 Thanks to \eqref{eq:hyp-kappa-g},  
 the second integral is estimated as follows:
 \begin{equation}
 \begin{split}
 b \int_{u_\kappa}^1 \frac{g(u)}{(u-a)^2+b^2} \dd u
 & \leqslant - c_\kappa b \int_{a-\rho_1}^{a} \frac{(1-u)^\kappa}{(u-a)^2+b^2} \dd u \\
 & \leqslant - c_\kappa b^{\kappa} \int_{-\rho_1/b}^{0} \frac{(\frac{1-a}{b}-s)^\kappa}{1+s^2} \dd s \\
 & \leqslant - c_\kappa b^{\kappa} \int_{0}^{1} \frac{s^\kappa}{1+s^2} \dd s,
 \end{split}
 \end{equation}
 where we assumed in the last line that $b \leqslant \rho_1$.
 Using \eqref{eq:phi-inv-mu-sing}, we have
 \begin{equation}
 \Im \Pinv(\mu)
 \leqslant - \frac{b}{\Us'(0)(a^2+b^2)} 
  - c_\kappa b^{\kappa} \int_{0}^{1} \frac{s^\kappa}{1+s^2} \dd s
  + b \rho_1^{-2} \|g\|_{L^1(0,u_\kappa)}.
 \end{equation}
 In particular, there exists $c > 0$ small enough, 
 and $0 < \rho < \rho_1$ small enough
 such that $\Im \Pinv(a+\ii b) \leqslant - c b^\kappa$
 for $a \in [1-\rho,1]$ and $b \in (0,\rho]$.
 \qed

\subsection*{Proof of \cref{thm:properties-G}}

 Let $a \in (0,1/4]$. Using \eqref{eq:def-g}, we compute
 \begin{equation}
  \Re G(a) = \frac{1}{a \Us'(0)} + g(a) \PV \int_0^1 \frac{\dd u}{u-a}
  + \int_0^1 \frac{g(u)-g(a)}{u-a} \dd u.
 \end{equation}
 First,
 \begin{equation}
   g(a) \PV \int_0^1 \frac{\dd u}{u-a} = g(a) \int_{2a}^{1} \frac{\dd u}{u-a} 
   = g(a) \ln \left(\frac{1}{a}-1\right).
 \end{equation}
 Second,
 \begin{equation}
  \left|\int_0^1 \frac{g(u)-g(a)}{u-a} \dd u\right|
  \leqslant \frac{1}{2} \| g' \|_{L^\infty(0,\frac{1}{2})} + 4 \| g\|_{L^1(\frac{1}{2},1)}
  + 2 |g(a)|.
 \end{equation}
 Since $g$ is bounded near zero, we conclude that, for $c < 1/\Us'(0)$,
 there exists $\rho > 0$ such that $\Re G(a) \geqslant c / a$ 
 for $a \in (0,\rho]$.
\qed

\end{document}